\newcommand{\bigO}{\mathcal{O}}
\newcommand{\I}{\mathrm{Im}}
\newcommand{\Ima}{\mathfrak{Im}}
\newcommand{\N}{\mathcal{N}}
\newcommand{\R}{\mathbb{R}}
\newcommand{\id}{\mathrm{id}}
\newcommand{\C}{\mathbb{C}}
\newcommand{\T}{\mathbb{T}}
\newcommand{\D}{\mathcal{D}}
\newcommand{\tr}{\mathrm{tr}}
\newcommand{\vol}{\mathrm{vol}}
\newcommand{\st}{\Sigma_\tau}
\newcommand{\so}{\Sigma_0}
\newcommand{\nt}{n_{\Sigma_\tau}}
\newcommand{\no}{n_{\Sigma_0}}
\newcommand{\utl}{\tilde{u}_\lambda}
\newcommand{\vl}{v_\lambda}
\newcommand{\supp}{\mathrm{supp}}
\newcommand{\vgt}{\mathrm{vol}_{\bar{g}_\tau}}
\newcommand{\vgo}{\mathrm{vol}_{\bar{g}_0}}
\newcommand{\vg}{\mathrm{vol}_g}
\newcommand{\Rot}{R_{[0,\tau]}}
\newcommand{\grad}{\mathrm{grad}}
\newcommand{\pk}{\partial_\kappa}
\newcommand{\pr}{\partial_\rho}
\newcommand{\pmu}{\partial_\mu}
\newcommand{\pn}{\partial_\nu}
\newcommand{\Vt}{\tilde{V}}
\newcommand{\Jt}{\tilde{J}}
\newcommand{\Real}{\mathfrak{Re}}
\newcommand{\uln}{u_{\lambda,\N}}
\newcommand{\elp}{e^{-2\lambda \phi_2}}
\newcommand{\End}{\mathrm{End}}
\newcommand{\Pb}{\mathbb{P}}
\newcommand{\Db}{\mathbb{D}}
\newcommand{\LeftEqNo}{\let\veqno\@@leqno}
\begin{document}

\numberwithin{equation}{section}
\newtheorem{theorem}[equation]{Theorem}
\newtheorem{remark}[equation]{Remark}
\newtheorem{claim}[equation]{Claim}
\newtheorem{lemma}[equation]{Lemma}
\newtheorem{definition}[equation]{Definition}
\newtheorem{conjecture}[equation]{Conjecture}
\newtheorem{proposition}[equation]{Proposition}

\title{Characterisation of the Energy of Gaussian Beams on Lorentzian Manifolds - with Applications to Black Hole Spacetimes}
\author{Jan Sbierski\thanks{Department of Applied Mathematics and Theoretical Physics, University of Cambridge,
Wilberforce Road,
Cambridge,
CB3 0WA,
United Kingdom}}
\date{\today}

\maketitle

\begin{abstract}

It is known that using the Gaussian beam approximation one can show that there exist solutions of the wave equation on a general globally hyperbolic Lorentzian manifold whose energy is localised along a given null geodesic for a finite, but arbitrarily long time.
In this paper, we show that the energy of such a localised solution is determined by the energy of the underlying null geodesic. This result opens the door to various applications of Gaussian beams on Lorentzian manifolds that \emph{do not admit a globally timelike Killing vector field}. In particular we show that trapping in the exterior of Kerr or at the horizon of an extremal Reissner-Nordstr\"om black hole necessarily leads to a `loss of derivative' in a local energy decay statement.  We also demonstrate the obstruction formed by the red-shift effect at the event horizon of a Schwarzschild black hole to scattering constructions from the future (where the red-shift  turns into a blue-shift): we construct solutions to the backwards problem whose energies grow exponentially for a finite, but arbitrarily long time.  Finally, we give a simple mathematical realisation of the heuristics for the blue-shift effect near the Cauchy horizon of sub-extremal and extremal black holes: we construct a sequence of solutions to the wave equation whose initial energies are uniformly bounded, whereas the energy near the Cauchy horizon goes to infinity.
%This geometric characterisation of the finite time behaviour of \emph{certain} solutions to the wave equation provides an easy way for ruling out various uniform statements about the temporal behaviour of the energy of \emph{all} solutions:
\end{abstract}

\tableofcontents

\section{Introduction}

Part I of this paper is concerned with the study of the temporal behaviour of Gaussian beams on \emph{general} globally hyperbolic Lorentzian manifolds. Here, a Gaussian beam is a highly oscillatory wave packet of the form
\begin{equation*}
\tilde{u}_\lambda =\frac{1}{\sqrt{E(\lambda,a,\phi)}} \cdot a \cdot e^{i\lambda \phi}\;,
\end{equation*}
where \(E(\lambda, a, \phi)\) is a renormalisation factor keeping the initial energy of \(\tilde{u}_\lambda\) independent of \(\lambda \in \R^+\), and the complex valued functions \(a\) and \(\phi\) are chosen in such a way that for \(\lambda \gg 0\) the Gaussian beam \(\tilde{u}_\lambda\) is an approximate solution to the wave equation on the underlying Lorentzian manifold \((M,g)\).
The failure of \(\tilde{u}_\lambda\) being an actual solution to the wave equation
\begin{equation}
\label{waveint}
\Box_g u =0
\end{equation}
is measured in terms of an energy norm - and this error can be made arbitrarily small up to a finite, but arbitrarily long time by choosing \(\lambda\) large enough. The construction of the functions \(a\) and \(\phi\) allows for restricting the support of \(a\) to a small neighbourhood of a given null geodesic. Thus, one can infer from \(\tilde{u}_\lambda\) being an approximate solution with respect to some energy norm, that\footnote{Cf. Theorem \ref{localised}.}
\vspace{2pt} 
\begin{equation}
\label{FirstStatement}
\begin{split}
&\textnormal{there exist \emph{actual} solutions of the wave equation \eqref{waveint} whose `energy' is localised} \\[-2pt] &\textnormal{along a given null geodesic up to some finite, but arbitrarily long time.}
\end{split}
\end{equation}
\vspace{2pt}
This is, roughly, the state of the art knowledge of Gaussian beams (see for instance \cite{Ral83}).

The main new result of Part I of this paper is to provide a geometric characterisation of the temporal behaviour of the localised energy of a Gaussian beam. More precisely, given a timelike vector field \(N\) (with respect to which we measure the energy) and a Gaussian beam \(\tilde{u}_\lambda\) supported in a small neighbourhood of an affinely parametrised null geodesic \(\gamma\), we show in Theorem \ref{main} that
\begin{equation}
\label{expo}
\int_{\st} J^N(\tilde{u}_\lambda) \cdot \nt \approx -g(N,\dot{\gamma})\big|_{\I(\gamma) \cap \st}
\end{equation}
holds up to some finite time \(T\). Here, we consider a foliation of the Lorentzian manifold \((M,g)\) by spacelike slices \(\st\), \(J^N(\tilde{u}_\lambda)\) denotes the contraction of the stress-energy tensor\footnote{We refer the reader to \eqref{seten} in Section \ref{notation} for the definition of the stress-energy tensor.} of \(\tilde{u}_\lambda\) with \(N\), and \(\nt\) is the normal of \(\st\). The left hand side of \eqref{expo} is called the \emph{\(N\)-energy} of the Gaussian beam \(\tilde{u}_\lambda\). The approximation in \eqref{expo} can be made arbitrarily good and the time \(T\) arbitrarily large if we only take \(\lambda >0\) to be big enough.
This characterisation of the energy allows then for a refinement of \eqref{FirstStatement}:\footnote{Cf. Theorem \ref{symbiosis}.}
\vspace{2pt} 
\begin{equation}
\label{SecondStatement}
\begin{split}
&\textnormal{There exist (actual) solutions of the wave equation \eqref{waveint} whose \(N\)-energy is localised along}  \\[-2pt] &\textnormal{a given null geodesic \(\gamma\) and behaves approximately like \(-g(N,\dot{\gamma})\big|_{\I(\gamma) \cap \st}\) up to some finite,} \\[-2pt] &\textnormal{but arbitrarily large time \(T\). Here, \(\dot{\gamma}\) is with respect to some affine parametrisation of \(\gamma\).} 
\end{split}
\end{equation}
It is worth emphasising that the need for an understanding of the temporal behaviour of the energy only arises for Gaussian beams on Lorentzian manifolds that \emph{do not admit a globally timelike Killing vector field\footnote{One could add here `\emph{uniformly}' timelike, meaning that the timelike Killing vector field does not `degenerate' when approaching the `boundary' of the manifold. Let us just state here that one can give precise meaning to `degenerating at the boundary'.}} - otherwise there is a canonical energy which is conserved for solutions to the wave equation \eqref{waveint}. Thus, for the majority of problems which so far found applications of Gaussian beams, for example the obstacle problem or the wave equation in time-independent inhomogeneous media, the question of the temporal behaviour of the energy did not arise (since it is trivial). However, understanding this behaviour on general Lorentzian manifolds is crucial for widening the application of Gaussian beams to problems arising, in particular, from general relativity.

In Part II of this paper, by applying \eqref{SecondStatement}, we derive some new results on the study of the wave equation on the familiar Schwarzschild, Reissner-Nordstr\"om, and Kerr black hole backgrounds (see \cite{HawkEllis} for an introduction to these spacetimes):
\begin{enumerate}
\item It is well-known folklore that the trapping\footnote{We do not intend to give a precise definition in this paper of what we mean by `trapping'. However, loosely speaking `trapping' refers here to the presence of null geodesics that stay for all time in a compact region of `space'.} at the \emph{photon sphere} in Reissner-Nordstr\"om and in Kerr necessarily leads to a `loss of derivative' in a local energy decay (LED) statement. We give a rigorous proof of this fact.
\item We also show that the trapping at the \emph{horizon} of an extremal Reissner-Nordstr\"om (and Kerr) black hole necessarily leads to a loss of derivative in an LED statement. 
\item When solving the wave equation \eqref{waveint} on the exterior of a Schwarzschild black hole backwards in time, the red-shift effect at the event horizon turns into a blue-shift: we construct solutions to the backwards problem whose energies grow exponentially for a finite, but arbitrarily long time. This demonstrates the obstruction formed by the red-shift effect at the event horizon to scattering constructions from the future.
\item Finally, we give a simple mathematical realisation of the heuristics for the blue-shift effect near the Cauchy horizon of (sub)-extremal Reissner-Nordstr\"om and Kerr black holes: we construct a sequence of solutions to the wave equation whose initial energy is uniformly bounded whereas the energy near the Cauchy horizon goes to infinity.
\end{enumerate}

\subsubsection*{Outline of the paper}

We start by giving a short historical review of Gaussian beams in Section \ref{review}. Thereafter we briefly explain how the notion of `energy' arises in the study of the wave equation and why it is important. We also discuss how the results obtained in this paper allow to disprove certain uniform statements about the temporal behaviour of the energy of waves. Section \ref{parsimonious} elaborates on the wide applicability of the Gaussian beam approximation and explains its advantage over the geometric optics approximation. In the physics literature a similar `characterisation of the energy of high frequency waves' is folklore - we discuss its origin in Section \ref{HF} and put it into context with the work presented in this paper. Section \ref{notation} lays down the notation we use. 

Part I of this paper discusses the theory of Gaussian beams on Lorentzian manifolds. Sections \ref{underlying} and \ref{GaussianBeams} establish Theorem \ref{localised} which basically says \eqref{FirstStatement} and is more or less well-known. Although the proof of Theorem \ref{localised} can be reconstructed from the literature (cf.\ especially \cite{Ral83}), we could not find a complete and self-contained proof of this statement. Moreover, there are some important subtleties (cf.\ footnote \ref{Subtlety}) which are not discussed elsewhere. For these reasons, and moreover for making the paper self-contained, we have included a full proof of Theorem \ref{localised}. In Section \ref{CharacterisationEnergy} we characterise the energy of a Gaussian beam, which is the main result of Part I of this paper. This result is then incorporated into Theorem \ref{localised}, which yields Theorem \ref{symbiosis} (or \eqref{SecondStatement}). Moreover, Section \ref{generalthms} contains some general theorems which are tailored to the needs of many applications.

In Part II of this paper, we prove the above mentioned new results on the behaviour of waves on various black hole backgrounds. The important ideas are first introduced in Section \ref{RNSec} by the example of the Schwarzschild and Reissner-Nordstr\"om family, whose simple form of the metric allows for an uncomplicated presentation. Thereafter, in Section \ref{KerrSec}, we proceed to the Kerr family.

The main purpose of the appendix is to contrast the Gaussian beam approximation with the much older geometric optics approximation. In Appendix \ref{geomop}, we recall the basics of the geometric optics approximation. Appendix \ref{Comparison} discusses Ralston's work \cite{Ral69} from 1969, where he showed that trapping in the obstacle problem necessarily leads to a loss of differentiability in an LED statement.
We conclude in Appendix \ref{BD} with giving a sufficient criterion for the formation of caustics, i.e., a breakdown criterion for solutions of the eikonal equation.

\subsection{A brief historical review of Gaussian beams}
\label{review}

The ansatz
\begin{equation}
\label{geomopansatz}
u_\lambda =  e^{i\lambda \phi} \big(a_0 + \frac{1}{\lambda} a_1 + \ldots + \frac{1}{\lambda^N} a_N \big)
\end{equation}
for either an highly oscillatory approximate solution to some PDE or for an highly oscillatory approximate eigenfunction to some partial differential operator, is known as the \emph{geometric optics ansatz}.
Here, \(N \in \mathbb{N}\), \(\phi\) is a real function (called the eikonal), the \(a_k\)'s are complex valued functions, and \(\lambda\) is a positive parameter determining how quickly the function \(u_\lambda\) oscillates. In the widest sense, we understand under a Gaussian beam a function of the form \eqref{geomopansatz} with a \emph{complex} valued eikonal \(\phi\) that is real valued along a bicharacteristic and has growing imaginary part off this bicharacteristic. This then leads to an exponential fall off in \(\lambda\) away from the bicharacteristic. 

The use of a complex eikonal, although in a slightly different context, appears already in work of Keller from 1956, see \cite{Kell56}. It was, however, only in the 1960's that the method of Gaussian beams was systematically applied and explored - mainly from a physics perspective. For more on these early developments we refer the reader to \cite{Arn73}, Chapter 4, and references therein. A general, mathematical theory of Gaussian beams, or what he called the complex WKB method, was developed by Maslov, see the book \cite{Mas94} for an overview and also for references therein. Several of the later papers on Gaussian beams have their roots in this work. 

The earliest application of the Gaussian beam method was to the construction of \emph{quasimodes}, see for example the paper \cite{Ral76} by Ralston from 1976. Quasimodes approximately satisfy some type of Helmholtz equation, and thus they give rise to time harmonic, approximate solutions to a wave equation. In this way quasimodes can be interpreted as standing waves. Later, various people used the Gaussian beam method for the construction of Gaussian \emph{wave packets} (but also called `Gaussian beams') which form approximate solutions to a hyperbolic PDE\footnote{It is this sort of `Gaussian beam' that is the subject of this paper for the case of the wave equation on Lorentzian manifolds. More appropriately, one could name them `Gaussian wave packets' or `Gaussian pulses' to distinguish them from the standing waves - which are actually \emph{beams}. However, we stick to the standard terminology.}. Those wave packets, in contrast to quasimodes, are not stationary waves, but they  move through space, the trajectory in spacetime being a bicharacteristic of the partial differential operator. A detailed reference for this construction is the work \cite{Ral83} by Ralston, which goes back to 1977. Another presentation of this construction scheme was given in 1981 by Babich and Ulin, see \cite{BabUl81}. 

Since then, there have been a lot of papers applying Gaussian beams to various problems\footnote{We refer the reader to \cite{Mas94} for a list of references.}. For instance, in quantum mechanics Gaussian beams correspond to semiclassical approximate solutions to the Schr\"odinger equation and thus help understand the classical limit; or in geophysics, one models seismic waves using the Gaussian beam approximation for solutions to a wave equation in an inhomogeneous (time-independent) medium.

\subsection{Gaussian beams and the energy method}
\label{energymethodGB}

\subsubsection{The energy method as a versatile method for studying the wave equation}

The study of the wave equation on various geometries has a long history in mathematics and physics.  A very successful and widely applicable method for obtaining quantitative results on the long-time behaviour of waves is the energy method. It was pioneered by Morawetz in the papers \cite{Mora61} and \cite{Mora62}, where she proved pointwise decay results in the context of the obstacle problem. In \cite{Mora68} she established what is now known as \emph{integrated local energy decay} (ILED) for solutions of the Klein-Gordon equation (and thus inferring decay). 
In the past ten years her methods were adapted and extended by many people in order to prove boundedness and decay of waves on various (black hole) spacetimes - a study which is mainly motivated by the black hole stability conjecture (cf. the introduction of \cite{DafRod08}). A small selection of examples is \cite{Klain85}, \cite{DafRod09a}, \cite{DafRod11}, \cite{DafRod10a}, \cite{DafRod10b}, \cite{AndBlue09}, \cite{TatToh08}, \cite{Luk10}, \cite{Schl10}, \cite{Are11a}, \cite{HolzSmu11}, \cite{Civ14} and \cite{Dya11}.

The philosophy of the energy method is first to derive estimates on a suitable energy (and higher order energies\footnote{A \emph{first order} energy controls the \emph{first} derivatives of the wave and is referred to in the following just as `energy'. \emph{Higher order} energies control \emph{higher} derivatives of the wave. A special case of the energy method is the so-called \emph{vector field method}. Higher order energies arise there naturally by commutation with suitable vector fields, see \cite{Klain85}.}) and then to establish pointwise estimates using Sobolev embeddings. Thus, given a spacetime on which one intends to study the wave equation using the energy method, one first has to set up such a suitable energy (and higher order energies - but in this paper we focus on the first order energy). A general procedure is to construct an energy from a foliation of the spacetime by spacelike slices \(\st\) together with a timelike vector field \(N\), see \eqref{N-energy} in Section \ref{notation}. We refrain from discussing here what choices of foliation and timelike vector field lead to a `suitable' notion of energy\footnote{However, see Section \ref{applications} for some examples and footnote \ref{suitable} for some further comments.}. Let us just mention here that in the presence of a globally timelike Killing vector field \(T\) one obtains a particularly well behaved energy by choosing \(N = T\) and a foliation that is invariant under the flow of \(T\).\footnote{Such a choice corresponds to what we denoted in the introduction as a  `canonical energy'.} We invite the reader to convince him- or herself that the familiar notions of energy for the wave equation on the Minkowski spacetime or in time independent inhomogeneous media arise as special cases of this more general scheme. 

\subsubsection{Gaussian beams as obstructions to certain uniform behaviour of the energy of waves}

The approximation with Gaussian beams allows to construct solutions to the wave equation whose energy is localised for an arbitrarily long, but finite time along a null geodesic. Such solutions form naturally an obstruction to certain uniform statements about the temporal behaviour of the energy of waves. A classical example is the case in which one has a null geodesic that does not leave a compact region in `space' and which has constant energy\footnote{We refer to the right hand side of \eqref{expo} as the \(N\)-energy of the null geodesic.}. Such null geodesics form obstructions to certain formulations of local energy decay being true\footnote{A classic regarding such a result is the work \cite{Ral69} by Ralston. However, he does not use the Gaussian beam approximation in this work, but the geometric optics approximation. We discuss his work in some detail in Appendix \ref{Comparison}.}. However, it is very important to be aware of the fact, that in general none of the solutions from \eqref{SecondStatement} has localised energy for \emph{all} time. Thus, in order to contradict, for instance, an LED statement, it is in general inevitable to resort to a \emph{sequence} of solutions of the form \eqref{SecondStatement} which exhibit the contradictory behaviour in the limit. For this scheme to work, however, it is clearly crucial that the LED statement in question is \emph{uniform} with respect to some energy which is left constant by the sequence of Gaussian beam solutions. Note here that \eqref{SecondStatement} in particular states that the time \(T\), up to which one has good control over the wave, can be made arbitrarily large without changing the initial energy! Higher order initial energies, however, will blow up when \(T\) is taken bigger and bigger.  In this paper we restrict our consideration to disproving statements that are uniform with respect to the first order energy.
%In order to contradict statements that are uniform with respect to some higher order energy one  thus has to renormalise the initial higher order energy of the solution in question which then yields in return a decrease of the first order energy of the solution\footnote{In this paper we restrict our consideration to disproving statements that are uniform with respect to the first order energy.}. 
In Sections \ref{photonsphere}, \ref{extremal} and \ref{SecKerrTrap} we demonstrate this important application of Gaussian beams: we show that certain (I)LED statements derived by various people in the presence of `trapping' are sharp in the sense that \emph{some} loss of derivative is necessary (however, one does not necessarily need to lose a whole derivative, cf.\ the discussion at the end of Section \ref{photonsphere}).

We conclude this section with the remark that in the presence of a globally timelike Killing vector field one can already infer such obstructions from \eqref{FirstStatement}, since the (canonical) energy of solutions to the wave equation is then constant. In this way one can easily infer from \eqref{FirstStatement} alone that an LED statement in Schwarzschild has to lose differentiability due to the trapping at the photon sphere.  But already for trapping in Kerr one needs to know how the `trapped' energy of the solutions referred to in \eqref{FirstStatement} behaves in order to infer the analogous result. This knowledge is provided by \eqref{expo} and/or \eqref{SecondStatement}.

\subsection{Gaussian beams are parsimonious}
\label{parsimonious}

The approximation by Gaussian beams can be carried out on a Lorentzian manifold \((M,g)\) under minimal assumptions:
\begin{enumerate}
\item 
One needs a well-posed initial value problem. This is ensured by requiring that \((M,g)\) is globally hyperbolic\footnote{The assumption of global hyperbolicity has another simplifying, but not essential, feature; cf. the discussion after Definition \ref{defgauss}.}. However, one can also replace the well-posed initial value problem by a well-posed initial-boundary value problem - and one can obtain, with small changes and some additional work in the proof, qualitatively the same results.
\item  
Having fixed an \(N\)-energy one intends to work with, one has to have an energy estimate of the form \eqref{energyestimate} at one's disposal, which is guaranteed by the condition \eqref{energyestimatecond}. The estimate \eqref{energyestimate} allows us to infer that the approximation by the Gaussian beam is \emph{global} in space. In particular, it is only under this condition that it is justified to say in \eqref{FirstStatement} and \eqref{SecondStatement} that the energy of the actual solution is \emph{localised} along a null geodesic\footnote{\label{Subtlety}That one needs condition \eqref{energyestimatecond} for ensuring that the energy is indeed localised is in fact another minor novelty in the study of Gaussian beams on \emph{general} Lorentzian manifolds (note that in the case of \(N\) being a Killing vector field, condition \eqref{energyestimatecond} is trivially satisfied). For an example for a violation of condition \eqref{energyestimatecond} we refer to the discussion after \eqref{energyboundimp} on page \pageref{energyboundimp}.}. However, as we show in Remark \ref{rem} one \emph{always} has a \emph{local} approximation, which is, together with the geometric characterisation of the energy, sufficient for obtaining control of the wave in a small neighbourhood of the underlying null geodesic \emph{regardless of condition} \eqref{energyestimatecond}. This then allows to establish, for example, the very general Theorem \ref{LED} which only requires global hyperbolicity (or some other form of well-posedness for the wave equation, cf.\ point 1.).
\end{enumerate}
In particular the method of Gaussian beams is not in need of any special structure on the Lorentzian manifold like Killing vector fields (as for example needed for the mode analysis or for the construction of quasimodes).

We would also like to emphasise here that in order to apply \eqref{SecondStatement} \emph{one only needs to understand the behaviour of the null geodesics of the underlying Lorentzian manifold}! This knowledge is often in reach and thus Gaussian beams provide in many cases an easy and feasible way for obtaining control of highly oscillatory solutions to the wave equation. In this sense the theory presented in Part I of this paper forms a good `black box result' which can be applied to various different problems.
\newline
\newline
We conclude this section with a brief comparison of the Gaussian beam approximation with the geometric optics approximation: 
Let us call the geometric optics approximation, which considers approximate solutions of the form \eqref{geomopansatz}, the `naive' geometric optics approximation. Although it applies under the same general conditions as the Gaussian beam approximation, in general the time \(T\), up to which one has good control over the solution, cannot be chosen arbitrarily large since the approximate solution breaks down at caustics. In Appendix \ref{BD} we show that caustics necessarily form along null geodesics that possess conjugate points. A prominent example of such null geodesics are the trapped null geodesics at the photon sphere in the Schwarzschild spacetime (cf. Section \ref{photonsphere} for the proof that these null geodesics have conjugate points). However, the formation of caustics is not a serious limitation of the geometric optics approximation, since one can extend the approximate solution through the caustics making use of Maslov's canonical operator. The approximate solution obtained in this way is, however, no longer of the simple form \eqref{geomopansatz}. The advantage of the Gaussian beam approximation is that the simple ansatz \eqref{geomopansatz} does not break down at caustics, it yields an approximation up to all finite time $T$.

\subsection{`High frequency' waves in the physics literature}
\label{HF}

In physics, the notion of a \emph{local observer's energy} arose with the emergence of Einstein's theory of relativity: Suppose an observer travels along a timelike curve \(\sigma : I \to M\) with unit velocity \(\dot{\sigma}\). Then, with respect to a Lorentz frame of his, he measures the local energy density of a wave \(u\) to be \(\mathbb{T}(u)(\dot{\sigma},\dot{\sigma})\), where \(\mathbb{T}(u)\) is the stress-energy  tensor of the wave \(u\), cf.\ \eqref{seten} in Section \ref{notation}. By considering the three parameter family of observers whose velocity vector field is given by the normal \(\nt\) to a foliation of \(M\) by spacelike slices \(\st\), the physical definition of energy is contained in the mathematical one (which is given by \eqref{N-energy}).

The prevalent description of highly oscillatory (or `high frequency') waves in the physics literature is that the waves (or `photons') propagate along null geodesics \(\gamma\) and each of these rays (or photons) carries an energy-momentum \(4\)-vector \(\dot{\gamma}\), where the dot is with respect to some affine parametrisation. In the high frequency limit, the number of photons is preserved. Thus, the energy of the wave, as measured by a local observer with world line \(\sigma\), is determined by the energy component \(-g(\dot{\gamma},\dot{\sigma})\) of the momentum \(4\)-vector \(\dot{\gamma}\). By considering a highly oscillatory wave that `gives rise to just one photon', one recovers the characterisation of the energy of a Gaussian beam, \eqref{expo}, given in this paper.

In the physics literature (see for example the classic \cite{MTW}, Chapter \(22.5\)), this description is justified using the naive geometric optics approximation. For the following brief discussion we refer the reader unfamiliar with the geometric optics approximation to Appendix \ref{geomop}. 

The conservation law 
\begin{equation}
\label{importantcons}
\mathrm{div} \, (a^2 \grad\,\phi) = 0 \;,
\end{equation}
which can be easily inferred from equation \eqref{transport}, is interpreted as the conservation of the number-flux vector \(S = a^2 \grad\,\phi\) of the photons. The leading component in \(\lambda\) of the renormalised\footnote{i.e.\ divided by \(\lambda^2\)} stress-energy tensor \(T(u_\lambda)\) of the wave \(u_\lambda = a \cdot e^{i \lambda \phi}\) in the geometric optics limit is then given by
\begin{equation*}
\mathbb{T}(u_\lambda) = \grad\, \phi \otimes S \;,
\end{equation*}
from which it then follows that each photon carries a \(4\)-momentum \(\grad \,\phi = \dot{\gamma}\).

In particular, making use of the conservation law \eqref{importantcons}, it is not difficult\footnote{Although, to the best of our knowledge, it is nowhere done explicitly.} to prove a geometric characterisation of the energy of waves in the naive geometric optics limit analogous to the one we prove in this paper for Gaussian beams. However, as we have mentioned in the previous section, the naive geometric optics approximation has the undesirable feature that it breaks down at caustics. 

The characterisation of the energy of Gaussian beams is more difficult since \eqref{importantcons} is replaced only by an \emph{approximate} conservation law\footnote{Cf.\ the discussion below \eqref{secondcons} in Section \ref{CharacterisationEnergy}.}. Moreover, it provides a rigorous justification of the temporal behaviour of the local observer's energy of photons, \emph{which also applies to photons along whose trajectory caustics would form}.

\subsection{Notation}
\label{notation}

Given a Lorentzian manifold $(M,g)$, we denote the canonical isomorphisms induced by the metric $g$ between the tangent and cotangent space by $\sharp : T_x^*M \to T_xM$ and $\flat : T_xM \to T_x^*M$, where $x \in M$ and for $\alpha \in T_x^*M$ and $X \in T_xM$  the isomorphisms $\sharp$ and $\flat$ are given by $\alpha^\sharp := g^{-1}(\alpha, \cdot)$ and $X^\flat := g(X,\cdot)$. Here $g^{-1}$ denotes the inverse of the metric $g$.
Moreover, we denote with $\cdot$ the inner product of two vectors as well as the inner product of two covectors, i.e.\ for $\alpha, \beta \in T^*_xM$ we write $\alpha \cdot \beta := g^{-1}(\alpha, \beta)$, and for $X, Y \in T_xM$ we write $X \cdot Y := g(X,Y)$. We also introduce the notation $\grad \,f := (df)^\sharp$ for the gradient of a function $f \in C^\infty(M, \R)$. The Levi-Civita connection on the Lorentzian manifold $(M,g)$ is denoted by $\nabla$, and we write $\mathrm{div}\, Z := \nabla_\mu Z^\mu$ for the divergence of a smooth vector field $Z$ on $M$. 
Furthermore, we define the \emph{wave operator} \(\Box_g\) by
\begin{equation*}
\Box_g u :=  \nabla^\mu \nabla_\mu u \;.
\end{equation*}
From here on we will however omit the index \(g\) on \(\Box_g\), since it is clear from the context which Lorentzian metric is referred to.

Whenever we are given a time oriented Lorentzian manifold \((M,g)\) that is (partly) foliated by spacelike slices \(\{\Sigma_\tau\}_{\tau \in [0,\tau^*)}\), \(0 < \tau^* \leq \infty\), we denote the future directed unit normal to the slice \(\Sigma_\tau\) with \(n_{\Sigma_\tau}\). Moreover, the induced Riemannian metric on \(\Sigma_\tau\) is then denoted by \(\bar{g}_\tau\) and we set \(R_{[0,T]}:=\bigcup_{0 \leq \tau \leq T} \Sigma_\tau\).

For \(u \in C^\infty(M,\C)\) we define the \emph{stress-energy tensor} \(\T(u)\) 
%\in \Gamma^\infty\big(M,\mathrm{Sym}(T^*M\otimes T^*M)\big)\) 
by
\begin{equation}
\label{seten}
\T(u):=\frac{1}{2}\overline{du} \otimes du + \frac{1}{2}du \otimes \overline{du} - \frac{1}{2}g(\cdot,\cdot)  g^{-1}(du,\overline{du})\;.
\end{equation}
Given in addition a vector field $N$, we also define the current $J^N(u)$ by
\[J^N(u):= \big[\T(u)(N,\cdot)\big]^\sharp\;.\]
Finally, if \(N\) is future directed timelike, we call
\begin{equation} 
\label{N-energy}
E_\tau^N(u):=\int_{\Sigma_\tau}J^N(u) \cdot n_{\Sigma_\tau} \vol_{\bar{g}_\tau}
\end{equation}
the \emph{\(N\)-energy} of \(u\) at time \(\tau\), where \(\vol_{\bar{g}_\tau}\) denotes the volume element corresponding to the metric \(\bar{g}_\tau\).\footnote{Cf.\ also \cite{ChoquetGR}, Appendix III, Sections 2.3 and 2.4 (in particular Definition (2.27)) for a detailed discussion of the notion of $N$-energy.} If \(A \subseteq \Sigma_\tau\), then \(E_{\tau,A}^N(u)\) denotes the \(N\)-energy of \(u\) at time \(\tau\) in the volume \(A\), i.e., the integration in \eqref{N-energy} is only over \(A\). 

The notion \eqref{N-energy} of the \(N\)-energy of a function \(u\) is especially helpful whenever we have an adequate knowledge of \(\Box u\), since one can then infer detailed information about the behaviour of the \(N\)-energy (cf.\ the energy estimate \eqref{energyestimate} in the next section), and thus also about the behaviour of \(u\) itself. Hence, the stress-energy tensor \eqref{seten} together with the notion of the \(N\)-energy is particularly useful for solutions \(u\) of the \emph{wave equation}
\begin{equation}
\label{waveeq}
\Box u =0.
\end{equation}
For more on the stress-energy tensor and the notion of energy, we refer the reader to \cite{Taylor1}, chapters 2.7 and 2.8.

Given a Lorentzian manifold \((M,g)\) and \(A \subseteq M\), we denote with \(J^+(A)\) the causal future of \(A\), i.e., all the points \(x\in M\) such that there exists a future directed causal curve starting at some point of \(A\) and ending at \(x\). The causal past of \(A\), \(J^-(A)\), is defined analogously\footnote{See also Chapter \(14\) in \cite{ONeill}.}. Finally, \(C\) and \(c\) will always denote positive constants.

We remark that for simplicity of notation we restrict our considerations to \(3+1\)-dimensional Lorentzian manifolds \((M,g)\). However, all results extend in an obvious way to dimensions \(n+1\), \(n \geq 1\). Moreover, all given manifolds, functions and tensor fields are assumed to be smooth, although this is only for convenience and clearly not necessary.

\section{Part I: The theory of Gaussian beams on Lorentzian manifolds}

\subsection{Solutions of the wave equation with localised energy}
\label{underlying}

This section and the next are devoted to the proof of Theorem \ref{localised}, which summarises the state of the art knowledge concerning the construction of solutions with localised energy using the approximation by Gaussian beams.

\begin{theorem}
\label{localised}
Let \((M,g)\) be a time oriented globally hyperbolic Lorentzian manifold with time function \(t\), foliated by the level sets \(\Sigma_\tau = \{t=\tau\}\), where \(\Sigma_0\) is a Cauchy hypersurface\footnote{Note that \cite{BerSan05} shows that every globally hyperbolic Lorentzian manifold admits a smooth time function.}. Furthermore, let \(\gamma\) be a null geodesic that intersects \(\Sigma_0\) and \(N\) a timelike, future directed vector field. 

For any neighbourhood \(\N\) of \(\gamma\), for any \(T>0\) with \(\Sigma_T \cap \I(\gamma) \neq \emptyset\), and for any \(\mu >0\) there exists a solution \(v \in C^\infty(M,\C)\) of the wave equation \eqref{waveeq} with \(E^N_0(v) =1\) and a \(\tilde{u} \in C^\infty(M,\C)\) with \(\supp(\tilde{u}) \subseteq \N\) such that
\begin{equation}
\label{thmlocapp}
E^N_\tau(v - \tilde{u}) < \mu \qquad \quad \forall \; 0 \leq \tau \leq T\;,
\end{equation}
provided that we have on \(R_{[0,T]} \cap J^+(\N \cap \so)\)
\begin{equation}
\label{energyestimatecond}
\begin{aligned}
\frac{1}{|dt(\nt)|} + |g(N,\nt)| &\leq C <\infty \quad \textnormal{ and }\quad  0 < c \leq |g(N,N)| \\
|\nabla N(\nt,\nt)| + \sum_{i=1}^3|\nabla &N(\nt,e_i)| +\sum_{i,j =1}^3|\nabla N(e_i,e_j)| \leq C  < \infty
\end{aligned}
\end{equation}
where \(c\) and \(C\) are positive constants and \(\{\nt,e_1,e_2,e_3\}\) is an orthonormal frame.
\end{theorem}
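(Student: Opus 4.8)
The plan is to produce the beam $\utl = E^{-1/2}\,a\,e^{i\lambda\phi}$ explicitly as a normalised WKB wave packet concentrated on $\gamma$, to let $v$ be the exact solution of \eqref{waveeq} with the same Cauchy data on $\so$ as $\utl$, and then to control the difference $w:=v-\utl$ by an energy estimate. Since $v$ and $\utl$ agree on $\so$, one has $E_0^N(v)=E_0^N(\utl)=1$ and $E_0^N(w)=0$, while $\Box w = -\Box\utl$; so the whole statement reduces to making $\Box\utl$ arbitrarily small in the energy norm over $R_{[0,T]}$. The role of hypothesis \eqref{energyestimatecond} is precisely to supply the energy estimate \eqref{energyestimate}: the bounds $g(N,N)\le -c<0$, $-g(N,\nt)\le C$ and $1/|\nt(t)|\le C$ make $E_\tau^N$ coercive (equivalent to $\int_{\st}|\nabla w|^2$) and comparable across slices, while the bounds on the components of $\nabla N$ bound the deformation tensor in the divergence identity $\nabla_\mu J^N(w)^\mu = \Real\big(\Box w\cdot\overline{N(w)}\big) + \tfrac12\,\T(w)^{\mu\nu}(\mathcal{L}_N g)_{\mu\nu}$. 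The divergence theorem on $R_{[0,\tau]}\cap J^+(\N\cap\so)$ followed by Gr\"onwall then yields $E_\tau^N(w)\le C(T)\big(\int_0^T\|\Box\utl\|_{L^2(\st)}\,d\tau\big)^2$.

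For the construction I would pass to Fermi coordinates $(\tau,y^1,y^2,y^3)$ adapted to $\gamma$ and expand
\[
e^{-i\lambda\phi}\,\Box\!\left(a\,e^{i\lambda\phi}\right) = -\lambda^2 a\, g^{-1}(d\phi,d\phi) + i\lambda\big(2\,g^{-1}(d\phi,da)+a\,\Box\phi\big) + \Box a .
\]
The complex phase $\phi$ is fixed by Taylor-expanding the eikonal equation $g^{-1}(d\phi,d\phi)=0$ in the transverse variables $y$ along $\gamma$: the zeroth and first orders reproduce that $\gamma$ is an affinely parametrised null geodesic with $d\phi|_\gamma=\dot{\gamma}^\flat$, and the second order gives a matrix Riccati equation along $\gamma$ for the transverse Hessian $M(\tau)=\partial^2_{yy}\phi|_\gamma$. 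One prescribes complex initial data with $\I M(0)>0$, and then solves the transport equation for $a$ order by order, its leading term obeying a first-order linear ODE along $\gamma$. Truncating both expansions at order $N$ makes the bracket above vanish to order $N+1$ in $|y|$; finally one multiplies by a cutoff $\chi\equiv1$ near $\gamma$ with $\supp\chi\subseteq\N$, which enforces $\supp\utl\subseteq\N$ and contributes only terms supported where $d\chi\neq0$, hence $O(e^{-c\lambda})$ by the transverse Gaussian decay.

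The error estimate rests on the positivity of $\I M(\tau)$, which forces $\I\phi\ge c(\tau)|y|^2$ with $c(\tau)>0$ on $[0,T]$, so that $e^{-\lambda\I\phi}$ is a Gaussian of width $\lambda^{-1/2}$ in each transverse direction. Rescaling $y=\lambda^{-1/2}z$ gives $E\sim\lambda^{1/2}$ (from normalising the $O(\lambda^2)$ gradient energy against the $\lambda^{-3/2}$ transverse volume), while the dominant surviving contribution to the bracket is $O(\lambda^2|y|^{N+1})$. Hence $\|\Box\utl\|_{L^2(\st)}^2\lesssim E^{-1}\lambda^{3/2-N}\sim\lambda^{1-N}$, which decays in $\lambda$ at a rate that grows with $N$. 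Choosing $N$ large and then $\lambda$ large feeds this into the Gr\"onwall estimate of the first paragraph to give $E_\tau^N(w)<\mu$ for all $0\le\tau\le T$; setting $\tilde{u}:=\utl$ for this $\lambda$ yields \eqref{thmlocapp}.

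The step I expect to be the main obstacle is the global-in-time construction of the phase: showing that the Riccati evolution preserves $\I M(\tau)>0$ throughout $[0,T]$ (so the beam never spreads to infinite width and stays genuinely concentrated in $\N$), together with the bookkeeping needed to extract uniform, $T$-dependent but $\lambda$-independent, lower bounds on $\I M(\tau)$ that control all the Gaussian-weighted integrals. A secondary technical point is checking that \eqref{energyestimatecond} really produces a clean Gr\"onwall inequality, i.e.\ that the coercivity and slice-comparability constants degenerate nowhere on $R_{[0,T]}\cap J^+(\N\cap\so)$.
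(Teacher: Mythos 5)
Your overall architecture coincides with the paper's: build $u_\lambda = a\,e^{i\lambda\phi}$ with a complex phase whose Hessian along $\gamma$ solves a matrix Riccati equation, normalise the initial $N$-energy to one, let $v$ solve the wave equation with the same Cauchy data on $\so$, and convert \eqref{energyestimatecond} into the energy estimate \eqref{energyestimate} plus Gr\"onwall to bound $E^N_\tau(v-\tilde u)$ by $||\Box\tilde u||^2_{L^2(R_{[0,T]})}$. Your higher-order truncation parameter $N$ is a harmless generalisation but is not needed: the paper only requires $d\phi\cdot d\phi$ to vanish to second order and the transport equation to hold to zeroth order along $\gamma$, which makes $||\Box u_\lambda||_{L^2(R_{[0,T]})}$ merely \emph{bounded} in $\lambda$; the decay of $||\Box\tilde u_\lambda||_{L^2}$ then comes for free from dividing by $\sqrt{E_0^N(u_\lambda)}\sim\lambda^{1/4}$, since the initial energy blows up.

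The genuine gap is the step you flag and then leave open: global existence on $[0,T]$ of the complex Riccati solution with $\I M(\tau)$ staying positive definite transversally to $\dot\gamma$. This is not bookkeeping; it is the entire content of the Gaussian beam method, and it is exactly where geometric optics fails (caustics). A quadratic matrix ODE can blow up in finite time, and positivity of the imaginary part is not preserved by any soft continuity argument. The paper never solves \eqref{riccati} directly: it linearises, solving the first-order linear system \eqref{reduction} for matrices $(J,V)$ with data $J(0)=\mathbbm{1}$, $V(0)=M(0)$, and sets $M=VJ^{-1}$. Vector solutions of \eqref{reduction} are flow lines of the lifted geodesic flow on $T(T^*M)$, and since the Hamiltonian flow preserves the symplectic form, the quantity $\omega\big(\tilde X_f(s),\overline{\tilde X_f(s)}\big)$ is conserved, cf.\ \eqref{conserved}. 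If $J(s_0)f=0$ for some $f\neq 0$, conservation gives $f\cdot\big[\I M(0)\big]\bar f=0$, so by \eqref{posdef} $f$ is proportional to $\dot\gamma(0)$; but then $\tilde X_f$ coincides with the tangent field $\dot\sigma$ of the lifted geodesic, whose projection to $M$ at $s_0$ is $\dot\gamma(s_0)\neq 0$, contradicting $J(s_0)f=0$. Hence $J(s)$ is invertible for all $s$ --- crucially because $J$ is \emph{complex}; a real $J$ would degenerate precisely at conjugate points, and the Schwarzschild photon sphere geodesics in Section \ref{photonsphere} show such points do occur in the applications. The same conservation law yields $\big[\I(M(s))J(s)f\big]\cdot\overline{J(s)f}=\big[\I(M(0))f\big]\cdot\bar f$, so positivity propagates along $\gamma$, and symmetry of $M(s)$ follows since \eqref{riccati} is invariant under transposition. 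Without this linearisation-plus-symplectic-invariant argument (or an equivalent), your uniform lower bound $\I\phi\geq c|y|^2$ on $[0,T]$, the localisation of the beam inside $\N$, and the claimed $\lambda$-rates are all unjustified. A secondary omission: a single Fermi-type chart along $\gamma$ presupposes $\gamma$ is embedded (this is where global hyperbolicity, via strong causality, enters), and one must glue the locally constructed data across charts covering $\gamma$ --- the paper does this with a partition of unity and Borel's Lemma.
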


\begin{center}
\def\svgwidth{6cm}
%% Creator: Inkscape 0.48.2, www.inkscape.org
%% PDF/EPS/PS + LaTeX output extension by Johan Engelen, 2010
%% Accompanies image file '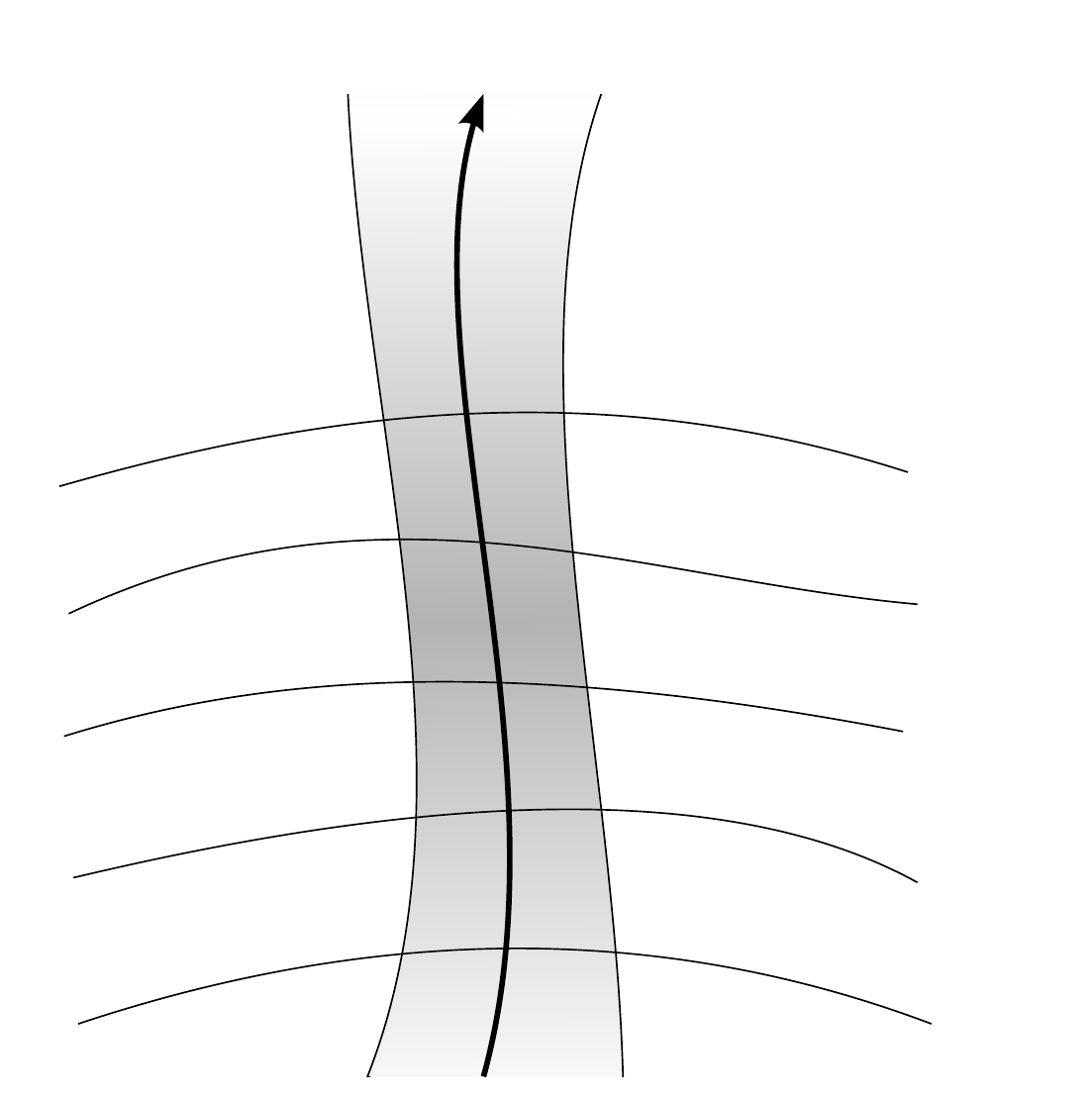' (pdf, eps, ps)
%%
%% To include the image in your LaTeX document, write
%%   \input{<filename>.pdf_tex}
%%  instead of
%%   \includegraphics{<filename>.pdf}
%% To scale the image, write
%%   \def\svgwidth{<desired width>}
%%   \input{<filename>.pdf_tex}
%%  instead of
%%   \includegraphics[width=<desired width>]{<filename>.pdf}
%%
%% Images with a different path to the parent latex file can
%% be accessed with the `import' package (which may need to be
%% installed) using
%%   \usepackage{import}
%% in the preamble, and then including the image with
%%   \import{<path to file>}{<filename>.pdf_tex}
%% Alternatively, one can specify
%%   \graphicspath{{<path to file>/}}
%% 
%% For more information, please see info/svg-inkscape on CTAN:
%%   http://tug.ctan.org/tex-archive/info/svg-inkscape
%%
\begingroup%
  \makeatletter%
  \providecommand\color[2][]{%
    \errmessage{(Inkscape) Color is used for the text in Inkscape, but the package 'color.sty' is not loaded}%
    \renewcommand\color[2][]{}%
  }%
  \providecommand\transparent[1]{%
    \errmessage{(Inkscape) Transparency is used (non-zero) for the text in Inkscape, but the package 'transparent.sty' is not loaded}%
    \renewcommand\transparent[1]{}%
  }%
  \providecommand\rotatebox[2]{#2}%
  \ifx\svgwidth\undefined%
    \setlength{\unitlength}{529.06015625bp}%
    \ifx\svgscale\undefined%
      \relax%
    \else%
      \setlength{\unitlength}{\unitlength * \real{\svgscale}}%
    \fi%
  \else%
    \setlength{\unitlength}{\svgwidth}%
  \fi%
  \global\let\svgwidth\undefined%
  \global\let\svgscale\undefined%
  \makeatother%
  \begin{picture}(1,1.00905599)%
    \put(0,0){\includegraphics[width=\unitlength]{Trapping.pdf}}%
    \put(0.44995267,0.56304423){\color[rgb]{0,0,0}\makebox(0,0)[lb]{\smash{\(\gamma\)}}}%
    \put(0.87320871,0.44016344){\color[rgb]{0,0,0}\makebox(0,0)[lb]{\smash{\(\Sigma_\tau\)}}}%
    \put(0.88231103,0.05559205){\color[rgb]{0,0,0}\makebox(0,0)[lb]{\smash{\(\Sigma_0\)}}}%
    \put(0.58193578,0.29680251){\color[rgb]{0,0,0}\makebox(0,0)[lb]{\smash{\(\mathcal{N}\)}}}%
    \put(0.86244107,0.56206584){\color[rgb]{0,0,0}\makebox(0,0)[lb]{\smash{\(\Sigma_T\)}}}%
  \end{picture}%
\endgroup%

\end{center}

Note that \eqref{thmlocapp} together with \(\supp(\tilde{u}) \subseteq \N\) make the statement, that the solution \(v\) hardly disperses up to time \(T\), rigorous. The energy of the solution \(v\) stays localised for finite time.

\begin{proof}
The function \(\tilde{u}\) in the theorem is the Gaussian beam, the \emph{approximate} solution to the wave equation \eqref{waveeq} which we need to construct. Recall that a Gaussian beam
\(u_\lambda \in C^\infty(M,\C)\) is of the form
\begin{equation}
\label{ansatz}
u_\lambda (x) = a_\N (x)e^{i\lambda \phi(x)} \;,
\end{equation}
where  \(\lambda >0\) is a parameter that determines how quickly the Gaussian beam oscillates, and \(a_\N\) and \(\phi\) are smooth, complex valued functions on \(M\), that do \emph{not} depend on \(\lambda\). However, \(a_\N\) depends on the neighbourhood \(\N\) of the null geodesic \(\gamma\). In Section \ref{GaussianBeams} we construct the functions \(a_\N\) and \(\phi\) in such a way that \(u_\lambda\) satisfies the following three conditions:
The \emph{first condition} is
\begin{equation}
\label{firstcond}
||\Box u_\lambda ||_{L^2(R_{[0,T]})} \leq C(T) \;, 
\end{equation}
where the constant \(C(T)\) depends on \(a_\N, \phi\) and \(T\), but not on \(\lambda\). The \emph{second condition} is
\begin{equation}
\label{secondcond}
E_0^N(u_\lambda) \to \infty \quad \textnormal{ for } \lambda \to \infty \;,
\end{equation}
where \(N\) is the timelike vector field from Theorem \ref{localised}. Finally, the \emph{third condition} is
\begin{equation}
\label{thirdcond}
u_\lambda \textnormal{ is supported in } \N.
\end{equation}
Assuming for now that we have already found functions \(a_\N\) and \(\phi\) such that the conditions \eqref{firstcond}, \eqref{secondcond} and \eqref{thirdcond} are satisfied, we finish the proof of Theorem \ref{localised}. In order to normalise the initial energy of the approximate solutions \(u_\lambda\), we define
\begin{equation*}
\tilde{u}_\lambda := \frac{u_\lambda}{\sqrt{E_0^N(u_\lambda)}} \;,
\end{equation*}
which, moreover, yields 
\begin{equation*}
||\Box\tilde{u}_\lambda||_{L^2(R_{[0,T]})} \to 0 \qquad  \textnormal{ for } \quad \lambda \to \infty \;.
\end{equation*}
This says that when the Gaussian beam becomes more and more oscillatory (i.e.\ for bigger and bigger \(\lambda\)), the closer it comes to being a proper solution to the wave equation.

We now define the actual solution \(v_\lambda\) of the wave equation - the one that is being approximated by the \(\tilde{u}_\lambda\) - to be the solution of the following initial value problem:
\begin{equation*}
\begin{aligned}
\Box v &= 0 \\
v\big|_{\Sigma_0} &= \tilde{u}_\lambda\big|_{\Sigma_0} \\
n_{\Sigma_0}v\big|_{\Sigma_0} &= n_{\Sigma_0}\tilde{u}_\lambda\big|_{\Sigma_0} \;.
\end{aligned}
\end{equation*}
Here, we make use of the fact that the Lorentzian manifold \((M,g)\) is globally hyperbolic and thus allows for a well-posed initial value problem for the wave equation.
Moreover, the condition \eqref{energyestimatecond} ensures that we have an energy estimate of the form
\begin{equation}
\label{energyestimate}
\int_{\st} J^N(u) \cdot \nt \,\vol_{\bar{g}_\tau}\leq C(T,N,\{\st\}) \Big( \int_{\so} J^N (u)\cdot \no \, \vol_{\bar{g}_0}  + ||\Box u||^2_{L^2(R_{[0,T]})} \Big) \qquad \forall \; 0 \leq \tau \leq T 
\end{equation}
at our disposal (see for example \cite{Taylor1}, chapter 2.8). Thus, we obtain
\begin{equation*}
E^N_\tau(\vl - \utl) \leq C(T,N,\st) \cdot ||\Box\utl||^2_{L^2(R_{[0,T]})} \qquad \qquad \forall \; 0\leq \tau \leq T \;,
\end{equation*}
which goes to zero for \(\lambda \to \infty\). Given now \(\mu > 0\), it suffices to choose \(\lambda_0 >0\) big enough and to set \(\tilde{u} := \tilde{u}_{\lambda_0}\) and \(v := v_{\lambda_0}\), which then finishes the proof under the assumption of the conditions \eqref{firstcond}, \eqref{secondcond} and \eqref{thirdcond}. 
\end{proof}

We end this section with a couple of remarks about Theorem \ref{localised}:

\begin{remark}
\label{rem}
As already mentioned, the condition \eqref{energyestimatecond} ensures that we have the energy estimate \eqref{energyestimate}. Note that it is automatically satisfied if the region under consideration, \(R_{[0,T]} \cap J^+(\N \cap \so)\), is relatively compact, which will be the case in many concrete applications. 

Moreover, by choosing, if necessary, \(\N\) a bit smaller, we can always arrange that \(\Sigma_T \cap \N \) is relatively compact and that \(\N \cap R_{[0,T]} \subseteq J^-(\Sigma_T \cap \N)\). Doing then the energy estimate in the relatively compact region \(J^-(\Sigma_T \cap \N) \cap J^+(\Sigma_0)\), we obtain
\begin{equation}
\label{laterimportant}
E^N_{\tau, \N \cap \st}(v - \tilde{u}) < \mu \qquad \quad \forall \; 0 \leq \tau \leq T
\end{equation}
independently of \eqref{energyestimatecond}. Of course, the information given by \eqref{laterimportant} is not interesting here, since Theorem \ref{localised} does not provide more information about \(\tilde{u}\) than its region of support. However, in Section \ref{CharacterisationEnergy} we will derive more information about the approximate solution \(\tilde{u}\) and then \eqref{laterimportant} will tell us about the temporal behaviour of the localised energy of \(v\), cf. Theorem \ref{symbiosis}.
\end{remark}

\begin{remark}
\label{rem2}
By taking the real or the imaginary part of \(\utl\) and \(\vl\) it is clear that we can choose \(\tilde{u}\) and \(v\) in Theorem \ref{localised} to be real valued.
\end{remark}

\subsection{The construction of Gaussian beams}
\label{GaussianBeams}

Before we start with the construction of Gaussian beams, let us mention that other presentations of this subject can be found for example in \cite{BabichBuldreyev} or \cite{Ral83}. The latter reference also includes the construction of Gaussian beams for more general hyperbolic PDEs.

Given now a neighbourhood \(\N\) of a null geodesic \(\gamma\), we will construct functions \(a_\N\), \(\phi \in C^\infty(M,\C)\) such that the approximate solution \(u_\lambda = a_\N \cdot e^{i\lambda \phi}\) satisfies the conditions \eqref{firstcond}, \eqref{secondcond} and \eqref{thirdcond}. This will then finish the proof of Theorem \ref{localised}.
We compute
\begin{equation}
\label{boxuGauss}
\Box u_\lambda = - \lambda^2 (d\phi \cdot d\phi)a_\N e^{i\lambda \phi} + i\lambda \Box \phi \cdot a_\N e^{i\lambda \phi} + 2i\lambda\, \grad\,\phi (a_\N)\cdot e^{i\lambda \phi} + \Box a_\N \cdot e^{i\lambda \phi} \;.
\end{equation}
If we required \(d\phi \cdot d\phi =0\) (eikonal equation) and \(2\grad \,\phi(a_\N) + \Box \phi \cdot a_\N = 0\), we would be able to satisfy \eqref{firstcond}.\footnote{We would also be able to satisfy \eqref{secondcond} and, at least up to some finite time \(T\), \eqref{thirdcond}, see Appendix \ref{geomop}.} This, however, would lead us to the naive geometric optics approximation (see Appendix \ref{geomop}), whose major drawback is that in general the solution \(\phi\) of the eikonal equation breaks down at some point along \(\gamma\) due to the formation of caustics. The method of Gaussian beams takes a slightly different approach. We only require an \emph{approximate} solution \(\phi \in C^\infty(M,\C)\) of the eikonal equation in the sense that
\begin{equation*}
d\phi \cdot d\phi \textnormal{ vanishes on }\gamma\textnormal{ to high order.}\footnote{The exact order to which we require \(d\phi \cdot d\phi\) to vanish on \(\gamma\) will be determined later.}
\end{equation*}
Moreover, we demand that
\begin{align}
&\phi\big|_\gamma \textnormal{ and } d\phi\big|_\gamma \textnormal{ are real valued} \label{real}\\
&\Ima \big(\nabla\nabla\phi\big|_\gamma\big) \textnormal{ is positive definite on a }3\textnormal{-dimensional subspace transversal to }\dot{\gamma}\;, \label{posdef}
\end{align}
where \(\Ima \big(\nabla\nabla\phi\big|_x\big)\), \(x \in M\), denotes the imaginary part of the bilinear map \(\nabla \nabla \phi\big|_x : T_xM \times T_xM \to \C\). 
Let us assume for a moment that \eqref{real} and \eqref{posdef} hold. Taking slice coordinates for \(\gamma\), i.e., a coordinate chart \((U, \varphi)\), \(\varphi : U \subseteq M \to \R^4\), such that \(\varphi\big(\I(\gamma) \cap U\big) = \{x_1=x_2=x_3=0\}\), we obtain 
\begin{equation}
\label{Imphi}
\Ima(\phi)(x) \geq c\cdot(x_1^2 + x_2^2 + x_3^2)\;,
\end{equation}
at least if we restrict \(\phi\) to a small enough neighbourhood of \(\gamma\). Note that such slice coordinates exist, since the global hyperbolicity of \((M,g)\) implies that \(\gamma\) is an embedded submanifold of \(M\). This is easily seen by appealing to the strong causality condition\footnote{Cf.\ for example \cite{ONeill}, Chapter 14, for more on the strong causality condition.}. Let us now denote the real part of \(\phi\) by \(\phi_1\) and the imaginary part by \(\phi_2\). We then have
\begin{equation*}
u_\lambda = a_\N \cdot e^{i \lambda \phi_1} \cdot e^{-\lambda \phi_2}\;.
\end{equation*}
We see that the last factor imposes the shape of a Gaussian on \(u_\lambda\), centred around \(\gamma\) -- this explains the name. Moreover, for \(\lambda\) large this Gaussian will become more and more narrow, i.e., less and less weight is given to the values of \(a_\N\) away from \(\gamma\). 

We rewrite \eqref{boxuGauss} as
\begin{equation}
\label{boxulambda}
\Box u_\lambda = -\lambda^2 \underbrace{(d\phi \cdot d\phi)} \cdot a_\N e^{i\lambda \phi_1} \cdot e^{-\lambda \phi_2} + i\lambda \big(\underbrace{2\grad \,\phi (a_\N) + \Box \phi \cdot a_\N}\big) \cdot e^{i\lambda \phi_1} \cdot e^{-\lambda \phi_2} +  \Box a_\N \cdot e^{i \lambda \phi_1} \cdot e^{-\lambda \phi_2} \;.
\end{equation}

Intuitively, if we can arrange for the underbraced terms to vanish on \(\gamma\) to some order and if we choose large \(\lambda\), then we will pick up only very small contributions. The next lemma makes this rigorous:

\begin{lemma}
\label{decaythroughGauss}
Let \(f \in C^\infty_0([0,T] \times \R^3,\C)\) vanish along \(\{x_1 = x_2 = x_3 =0\}\) to order \(S\), i.e., all partial derivatives up to and including the order \(S\) of \(f\) vanish along \(\{x_1 = x_2 = x_3 =0\}\), and let $c>0$ be a constant.
We then have

\begin{equation*}\LeftEqNo
\int_{[0,T]\times \R^3}|f(x)|^2 e^{-\lambda\cdot c(x_1^2 + x_2^2 +x_3^2)} \, dx \leq C \lambda^{-(S+1) - \frac{3}{2}} \tag{i}
\end{equation*}
and
\begin{equation*}\LeftEqNo
\int_{[0,T]\times \R^3}|f(x)| e^{-\lambda\cdot c(x_1^2 + x_2^2 +x_3^2)} \, dx \leq C \lambda^{-\frac{(S+1)}{2} - \frac{3}{2}}\;, \tag{ii}
\end{equation*}
where \(C\) depends on \(f\) (and on \(T\)).
\end{lemma}

\begin{proof}
We prove $(i)$ here, since it is used in the following. The formulation $(ii)$ of Lemma \ref{decaythroughGauss} is appealed to in the proof of Theorem \ref{main} in Section \ref{CharacterisationEnergy} - the proof is analogous. 

Introduce stretched coordinates \(y_0 := x_0\), \(y_i:=\sqrt{\lambda}x_i\) for \(i=1,2,3\). Since \(f\) vanishes along the \(x_0\) axis to order \(S\) and has compact support, we get \(|f(x)| \leq C \cdot |\underline{x}|^{S+1}\) for all \(x=(x_0,\underline{x}) \in [0,T] \times \R^3\); thus
\[|f(y_0, \frac{\underline{y}}{\sqrt{\lambda}})| \leq C \cdot \frac{|\underline{y}|^{S+1}}{\lambda^{\frac{S+1}{2}}} \;.\]
This yields
\begin{equation}
\int_{[0,T]\times \R^3} |f(x)|^2 e^{-\lambda\cdot c|\underline{x}|^2} \;dx \leq \int_{[0,T]\times \R^3} C \cdot |\underline{y}|^{2(S+1)}e^{-c|\underline{y}|^2} \,dy \cdot \lambda^{-(S+1) - \frac{3}{2}}\;.
\end{equation}
\end{proof}

We summarise the approach taken by the Gaussian beam approximation in the following 

\begin{lemma}
\label{red}
Within the setting of Theorem \ref{localised}, assume we are given \(a, \phi \in C^\infty(M, \C)\) which satisfy \eqref{real} and \eqref{posdef}. Moreover, assume
\begin{align}
d\phi \cdot d\phi \quad &\textnormal{ vanishes to second order along }\gamma \label{gaussfirstcond} \\
2\grad \,\phi(a) + \Box \phi \cdot a \quad &\textnormal{ vanishes to zeroth order along }\gamma \label{gausssecondcond} \\
a\big(\I(\gamma) \cap \so\big) \neq 0 \quad &\textnormal{ and } \quad d\phi\big(\I(\gamma) \cap \so\big) \neq 0 \label{initialenergycond}
\end{align}
Given a neighbourhood \(\N\) of \(\gamma\), we can then multiply \(a\) by a suitable bump function \(\chi_\N\) which is equal to one in a neighbourhood of \(\gamma\) and satisfies \(\supp (\chi_\N) \subseteq \N\), such that 
\begin{equation*}
u_\lambda = u_{\lambda, \N} = a_\N e^{i\lambda \phi} 
\end{equation*} 
satisfies \eqref{firstcond}, \eqref{secondcond} and \eqref{thirdcond}, where \(a_\N := a \cdot \chi_\N\).
\end{lemma}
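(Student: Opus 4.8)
The plan is to fix the cutoff geometrically, dispatch \eqref{thirdcond} at once, obtain \eqref{firstcond} from Lemma \ref{decaythroughGauss} together with the Gaussian decay \eqref{Imphi}, and finally read off the leading \(\lambda\)-asymptotics of the initial energy to get \eqref{secondcond}. First I would fix a tubular neighbourhood \(V\) of \(\I(\gamma)\cap R_{[0,T]}\) small enough that slice coordinates and the estimate \eqref{Imphi}, i.e.\ \(\phi_2 \geq c(x_1^2+x_2^2+x_3^2)\), are valid on \(V\), with \(V \subseteq \N\) and \(\overline{V \cap R_{[0,T]}}\) compact (possible since \(\I(\gamma)\cap R_{[0,T]}\) is compact, \(t\) being a time function). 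I would take \(\chi_\N\) equal to \(1\) on a smaller tube \(W\) with \(\overline{W} \subseteq V\), supported in \(V\), and set \(a_\N := a\cdot\chi_\N\). Then \(\supp(u_\lambda) \subseteq \supp(\chi_\N) \subseteq V \subseteq \N\), which is \eqref{thirdcond}.

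For \eqref{firstcond} I would split \(R_{[0,T]}\) into \(W\) and \(V\setminus W\) (outside \(V\) the beam vanishes). On \(W\) we have \(a_\N = a\), so the three summands of \eqref{boxulambda} carry prefactors \(\lambda^2,\lambda,1\), while by \eqref{gaussfirstcond} and \eqref{gausssecondcond} the coefficients \((d\phi\cdot d\phi)a\) and \(2\grad\,\phi(a)+\Box\phi\cdot a\) vanish along \(\gamma\) to orders \(2\) and \(0\). Using \(e^{-2\lambda\phi_2}\leq e^{-2c\lambda(x_1^2+x_2^2+x_3^2)}\) and Lemma \ref{decaythroughGauss} with \(S=2\) and \(S=0\) for the first two, and a bare Gaussian integral for the third, the squared \(L^2\)-norms are bounded by \(\lambda^4\cdot\lambda^{-9/2}\), \(\lambda^2\cdot\lambda^{-5/2}\) and \(\lambda^{-3/2}\), all tending to \(0\). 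On the transition region \(V\setminus W\) we have \(\phi_2 \geq c_0 > 0\), so each summand is pointwise \(O(\lambda^2 e^{-2\lambda c_0})\) over a set of finite volume and contributes negligibly. Hence \(\|\Box u_\lambda\|_{L^2(R_{[0,T]})}\) is bounded (indeed \(\to 0\)), which gives \eqref{firstcond}.

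The substance of the lemma is \eqref{secondcond}. Writing \(du_\lambda = (da_\N + i\lambda a_\N\,d\phi)e^{i\lambda\phi}\) and inserting into \(E_0^N(u_\lambda) = \int_{\so}\T(u_\lambda)(N,\no)\,\vgo\), I would collect powers of \(\lambda\); the leading, \(\lambda^2\)-term is
\begin{equation*}
\lambda^2|a_\N|^2 e^{-2\lambda\phi_2}\Big[\Real\big(\overline{d\phi}(N)\,d\phi(\no)\big) - \tfrac{1}{2}g(N,\no)\,\big(d\phi\cdot\overline{d\phi}\big)\Big]\;,
\end{equation*}
with no residual oscillation, since the phase cancels in \(\overline{du_\lambda}\,du_\lambda\). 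Because \(\phi_2\) and \(d\phi_2\) vanish on \(\gamma\) while \(\I(\nabla\nabla\phi)\) is transversally positive definite by \eqref{posdef}, the weight \(e^{-2\lambda\phi_2}\) is a genuine Gaussian concentrating at \(p := \I(\gamma)\cap\so\), and Laplace's method contributes a factor \(\lambda^{-3/2}\) times the value of the remaining integrand at \(p\). At \(p\), \(d\phi\) is real by \eqref{real} and null by \eqref{gaussfirstcond}, so the bracket collapses to \(d\phi(N)\,d\phi(\no) = g(\grad\phi,N)\,g(\grad\phi,\no)\).

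It then remains to check the sign, and this is where I expect the only real subtlety to lie. Both \(N\) and \(\no\) are future directed timelike and \(\grad\phi|_p\) is a nonzero null vector by \eqref{initialenergycond}; hence \(g(\grad\phi,N)\) and \(g(\grad\phi,\no)\) are nonzero with the same sign and their product is strictly positive. Together with \(|a(p)|^2 > 0\) (again \eqref{initialenergycond}), this makes the leading coefficient positive, so that \(E_0^N(u_\lambda) = c\,\lambda^{1/2}(1+o(1)) \to \infty\), the \(\lambda^1\)- and \(\lambda^0\)-parts integrating to \(O(\lambda^{-1/2})\) and \(O(\lambda^{-3/2})\) and being negligible; this is \eqref{secondcond}. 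The genuine obstacle is precisely this last step: one must simultaneously ensure that the transverse Gaussian integral converges with the correct power \(\lambda^{-3/2}\) (where \eqref{posdef} is essential) and that its coefficient does not vanish (the null-times-timelike sign computation, where \eqref{real} and \eqref{initialenergycond} enter), since a vanishing coefficient would leave the initial energy bounded and the whole scheme would collapse.
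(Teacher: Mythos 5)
Your proof is correct and follows essentially the same route as the paper's: the same decomposition \eqref{boxulambda}, the same key estimate (Lemma \ref{decaythroughGauss}) combined with the Gaussian bound \eqref{Imphi} and the compactness of the support to get \eqref{firstcond}, and the same leading-order analysis of the \(\lambda^2\)-part of the initial energy, with exactly a \(\lambda^{-3/2}\) damping, to get \eqref{secondcond}. In fact you spell out two points the paper leaves implicit: that the derivatives of the cutoff contribute only exponentially small terms (your splitting into \(W\) and \(V\setminus W\)), and the null-paired-with-timelike sign argument showing that the coefficient \(|a|^2\,d\phi(N)\,d\phi(\no)\) at \(\I(\gamma)\cap\so\) is strictly positive, which is what the paper compresses into ``only a \(\lambda^{-\frac{3}{2}}\) damping due to condition \eqref{initialenergycond} (and \eqref{gaussfirstcond})''.

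The one deviation is in the cutoff: you build \(\chi_\N\) as a tube around the compact segment \(\I(\gamma)\cap R_{[0,T]}\), so it equals one only near that segment, whereas the lemma asserts \(\chi_\N\equiv 1\) on a neighbourhood of all of \(\gamma\). The paper instead chooses \(\tilde{\chi}\equiv 1\) near the whole geodesic, with \(R_{[0,T]}\cap\supp(\tilde{\chi})\) relatively compact \emph{for every} \(T\) with \(\Sigma_T\cap\I(\gamma)\neq\emptyset\), so that the resulting structure functions do not depend on \(T\) and can be reused verbatim for arbitrary \(T\) later (this is what Definition \ref{defgauss} and the proof of Theorem \ref{main} rely on). For the fixed \(T\) of Theorem \ref{localised} your construction suffices, and the repair is immediate: run your argument with a \(T\)-uniform tube along all of \(\gamma\), which exists since global hyperbolicity makes \(\gamma\) an embedded submanifold; all your estimates on \(R_{[0,T]}\) go through unchanged.
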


\begin{proof}
Cover \(\gamma\) by slice coordinate patches and let \(\tilde{\chi}\) be a bump function which meets the following three requirements:
\begin{enumerate}[i)]
\item  \(\tilde{\chi}\) is equal to one in a neighbourhood of \(\gamma\) 
\item \eqref{Imphi} is satisfied for all \(x \in \supp(\tilde{\chi})\) 
\item \(R_{[0,T]} \cap \supp(\tilde{\chi})\) is relatively compact in \(M\) for all \(T > 0\) with \(\Sigma_T \cap \I(\gamma) \neq \emptyset\)\,.
\end{enumerate}
Pick now a second bump function \(\tilde{\chi}_\N\) which is again equal to one in a neighbourhood of \(\gamma\) and is supported in \(\N\). We then define \(\chi_\N := \tilde{\chi} \cdot \tilde{\chi}_\N\). Clearly, \eqref{thirdcond} is satisfied.

In order to see that \eqref{firstcond} holds, note that the conditions \eqref{real}, \eqref{posdef}, \eqref{gaussfirstcond} and \eqref{gausssecondcond} are still satisfied by the pair \((a_\N, \phi)\).  Moreover note that due to condition iii)  the integrand is supported in a compact region for each \(T >0\) with \(\Sigma_T \cap \I(\gamma) \neq \emptyset\) . Thus, the spacetime volume of this region is finite. We thus obtain \eqref{firstcond} from \eqref{boxulambda} and Lemma \ref{decaythroughGauss}.

Finally, we have 
\begin{equation*}
E_0^N(u_\lambda) \geq C \cdot (\lambda^{\frac{1}{2}} - 1)\;.
\end{equation*}
This follows since the highest order term in \(\lambda\) in \(E_0^N(u_\lambda)\) is 
\begin{equation*}
\lambda^2 \cdot \int_{\so} |a_{\N}|^2 N\phi_1 \cdot \no \phi_1 e^{-2\lambda \phi_2} \,\vgo \;,
\end{equation*}
and the same scaling argument used in the proof of Lemma \ref{decaythroughGauss} shows that the term \(e^{-2\lambda \phi_2}\) leads to a \(\lambda^{-\frac{3}{2}}\) damping - and only to a \(\lambda^{-\frac{3}{2}}\) damping due to condition \eqref{initialenergycond} (together with \eqref{gaussfirstcond} and \eqref{real}).
Thus, \eqref{secondcond} is satisfied as well and the lemma is proved.
\end{proof}

%\begin{definition}
%Let \((M,g)\) be a globally hyperbolic Lorentzian manifold, \(\gamma\) a null geodesic, and \(\N\) a neighbourhood of \(\gamma\). If \(a, \phi \in C^\infty(M, \C)\) satisfy \eqref{real}, \eqref{posdef}, \eqref{gaussfirstcond} and \eqref{gausssecondcond}, we say that 
%\begin{equation*}
%u_{\lambda, \N} = a_\N e^{i\lambda \phi}
%\end{equation*}
%is a \emph{Gaussian beam} along \(\gamma\) with parameters \(\lambda\) and \(\N\), where \(a_\N = a \cdot \chi_N\) with \(\chi_N\) being a bump function that is equal to one in a neighbourhood of \(\gamma\), supported in \(\N\), and is such that we have \eqref{Imphi} for all \(x \in \supp{\chi_\N}\), having fixed a cover of \(\gamma\) by adapted coordinate charts (c.f.\ Lemma \ref{red}). (Are geodesics always embedded submfds? Then could construct tubular neighbourhood, which is needed for Borel's Lemma - perhaps also mention there.) DO I WANT TO MAKE GAUSSIAN BEAMS MORE GENERAL?
%\end{definition}

%\fbox{\begin{minipage}{16.1cm}
%From this we see that we need in addition to \eqref{real} and \eqref{posdef}
%\begin{align}
%d\phi \cdot d\phi \quad &\textnormal{ to vanish to second order along }\gamma \label{gaussfirstcond} \\
%2\grad \,\phi(a) + \Box \phi \cdot a \quad &\textnormal{ to vanish to zeroth order along }\gamma \label{gausssecondcond}
%\end{align}
%in order to ensure that \eqref{firstcond} holds.\end{minipage}}
%Moreover, it is easy to see that \eqref{secondcond} holds as well. Thus, it remains to construct functions \(\phi, a \in C^\infty(M,\C)\) that have these properties. 

Given a null geodesic \(\gamma\) on \((M,g)\), we now construct a Gaussian beam along \(\gamma\), i.e., we construct functions \(a, \phi \in C^\infty(M, \C)\) which satisfy \eqref{real}, \eqref{posdef}, \eqref{gaussfirstcond}, \eqref{gausssecondcond} and \eqref{initialenergycond}. By Lemma \ref{red}, this then finishes the proof of Theorem \ref{localised}.

Note that the conditions \eqref{real}, \eqref{posdef}, \eqref{gaussfirstcond}, \eqref{gausssecondcond} and \eqref{initialenergycond} only depend on the values and the derivatives of \(\phi\) and \(a\) on \(\gamma\).
%\footnote{and of course on \(a\) (and a priori on its derivative) on \(\gamma\), but let us first focus on \(\phi\), since this is the crucial construction.}. 
This allows for, instead of constructing \(\phi\) and \(a\) directly, constructing compatible first and second derivatives of \(\phi\) along \(\gamma\) and the function \(a\) along \(\gamma\) such that the above conditions are satisfied.
With the first and second derivatives of \(\phi\) being \emph{compatible} we mean the following consistency statement 
\begin{equation}
\label{compatible}
\partial_\mu \partial_\nu \phi \big(\gamma(s)\big) \dot{\gamma}^\nu(s) = \frac{d}{ds} \partial_\mu \phi\big(\gamma(s)\big)\;.
\end{equation}
From this data we can then build functions \(\phi, a \in C^\infty(M,\C)\) whose derivatives along \(\gamma\) agree with the constructed ones\footnote{This construction is known as Borel's Lemma.} - and thus, \(\phi\) and \(a\) will satisfy the above requirements. 
We start with the construction of \(\phi\). 

Let \(s\) be some affine parameter for the future directed null geodesic \(\gamma\) such that \(\gamma(0) \in \so\). We set\footnote{By slight abuse of notation we will denote the covector field along \(\gamma\) which will later be the differential of \(\phi\) already by \(d\phi\). Similarly for the second derivatives.}
\begin{equation}
\label{firstd}
d\phi(s) := \dot{\gamma}^\flat(s) \;.
\end{equation}
Moreover, we require that \(\phi(0) \in \R\). The definition \eqref{firstd} then determines \(\phi(s) \in \R\) for all \(s\); hence \eqref{real} is satisfied.
Since \(\dot{\gamma}\) is a null vector, we clearly have \(d\phi \cdot d\phi = 0\) along \(\gamma\). We now pick a slice coordinate chart that covers part of \(\gamma\) and set \(f(x) := \frac{1}{2}g^{\mu \nu}(x)\partial_\mu \phi (x) \, \partial_\nu \phi (x)\). Note that the notion of `vanishing to second order' is independent of the choice of coordinates. In order to find the conditions that the second derivative of \(\phi\) has to satisfy, we compute
\begin{equation}
\label{firstorder}
0 \overset{!}{=} \pk f \big|_\gamma = \frac{1}{2}(\pk g^{\mu \nu}) \pmu\phi \pn\phi\big|_\gamma + g^{\mu \nu} \pmu \phi \pk \pn \phi \big|_\gamma = -\dot{(\pk\phi)} + \dot{\gamma}^\nu \pn \pk \phi \;,
\end{equation}
where we have used that we have already fixed \eqref{firstd} and that \(\gamma\) is a null geodesic, thus it satisfies the equations \eqref{characteristics} of the geodesic flow on \(T^*M\).
The condition \eqref{firstorder} is exactly the compatibility condition \eqref{compatible}, thus \(f\) vanishes to first order along \(\gamma\) if we choose the second derivatives of \(\phi\) to be compatible with the first ones. Moreover, we compute
\begin{equation}
\label{secondorder}
\begin{aligned}
0 \overset{!}{=} \pk \pr f \big|_\gamma = \frac{1}{2} (\pk \pr &g^{\mu \nu}) \pmu \phi \pn \phi \big|_\gamma + (\pk g^{\mu \nu}) \pr \pmu \phi \cdot \pn \phi \big|_\gamma + (\pr g^{\mu \nu}) \pmu \phi \cdot \pk \pn \phi \big|_\gamma \\&+ g^{\mu \nu} \pr \pmu \phi \cdot \pk \pn \phi \big|_\gamma + \underbrace{g^{\mu \nu} \pmu \phi}_{=\dot{\gamma}^\nu} \pr \pk \pn \phi \big|_\gamma \;.
\end{aligned}
\end{equation}
The condition \eqref{secondorder} has actually a lot of structure. In order to see this more clearly, let \(H:T^*M \to \R\) be given by \(H(\zeta):=\frac{1}{2}g^{-1}(\zeta, \zeta)\), and having chosen a coordinate system \(\{x^\mu\}\) for part of \(M\) we denote the corresponding canonical coordinate system on part of \(T^*M\) by \(\{x^\mu,p^\nu\} = \{\xi^\alpha\}\), where \(\mu, \nu \in \{0, \ldots 3\}\) and \(\alpha \in \{0, \ldots 7\}\). We define the following matrices
\begin{alignat*}{2}
A_{\kappa \rho} (s) &:= \frac{1}{2} (\pk \pr g^{\mu \nu}) \pmu \phi \pn \phi \,\big(\gamma(s)\big) \quad &= \frac{\partial^2 H}{\partial x^\kappa \partial x^{\rho}} \big(\gamma (s)\big) \\
B_{\kappa \rho}(s) &:= \pk g^{\rho \nu} \pn \phi \,\big(\gamma(s)\big) &= \frac{\partial^2 H}{\partial x^\kappa \partial p^{\rho}} \big(\gamma (s)\big) \\
C_{\kappa \rho} (s) &:= g^{\kappa \rho}\,\big(\gamma (s)\big) &= \frac{\partial^2 H}{\partial p^\kappa \partial p^{\rho}} \big(\gamma (s)\big) \\
M_{\kappa \rho} (s) &:= \pk \pr \phi \,\big(\gamma (s)\big) \;, &
\end{alignat*}
and rewrite \eqref{secondorder} as 
\begin{equation}
\label{riccati}
0= A + B  M + M B^T + MCM + \frac{d}{ds}M \;.
\end{equation}
This quadratic ODE for the matrix  \(M\) is called a \emph{Riccati equation}. We would like to ensure that we can find a global solution that satisfies \eqref{posdef} and is compatible with the first derivatives. 

There is a well-known way to solve \eqref{riccati}, which boils down here to finding a suitable set of Jacobi fields - or using the language of Appendix \ref{BD}, a suitable Jacobi tensor. We consider the system of matrix ODEs 
\begin{equation}
\label{reduction}
\begin{aligned}
\dot{J} &=  B^TJ + CV \\
\dot{V} &= -AJ - BV \;,
\end{aligned}
\end{equation}
where \(J\) and \(V\)  are \(4\times 4\) matrices. If \(J\) is invertible then it is an easy exercise to verify that \(M:=VJ^{-1}\) solves \eqref{riccati}. We will show that we can choose initial data such that \(J\) is invertible for all time. But first let us make some remarks about \eqref{reduction}.

Although \eqref{riccati} depends on the choice of coordinates and thus has no geometric interpretation, a vector solution of the system of ODEs \eqref{reduction} is a geometric quantity:
%In the \(\{x^\mu, p^\nu\}\) coordinates we have chosen, 
Let us denote the Hamiltonian flow of \(H\) by \(\Psi_t : T^*M \to T^*M\), which is exactly the geodesic flow on \(T^*M\). The vector solutions of \eqref{reduction} are exactly those flow lines of the lifted flow \((\Psi_t)_* : T(T^*M) \to T(T^*M)\) that project down on the lifted geodesic \(s \mapsto \dot{\gamma}^\flat(s) \in T^*M\). In order to see this, let\footnote{In the following vectors are denoted by tilded capital letters in order to distinguish them from the untilded matrices.}
\begin{equation*}
\tilde{X} = \tilde{X}^\alpha \frac{\partial}{\partial \xi^\alpha}\Big|_{\dot{\gamma}^\flat(0)}  = \Jt^\mu \frac{\partial}{\partial x^\mu}\Big|_{\dot{\gamma}^\flat(0)} + \Vt^\nu \frac{\partial}{\partial p^\nu}\Big|_{\dot{\gamma}^\flat(0)} \in T_{\dot{\gamma}^\flat(0)}(T^*M)\;.
\end{equation*}
The pushforward via \(\Psi_t\) is then a vector field along \(\dot{\gamma}^\flat(s)\),
\begin{equation*}
(\Psi_s)_* \tilde{X} = \frac{\partial \Psi^\alpha_s}{\partial \xi^\beta}\Big|_{\dot{\gamma}^\flat(0)} \tilde{X}^\beta \, \frac{\partial}{\partial \xi^\alpha}\Big|_{\dot{\gamma}^\flat(s)} =: \Jt^\mu(s) \frac{\partial}{\partial x^\mu}\Big|_{\dot{\gamma}^\flat(s)} + \Vt^\nu(s) \frac{\partial}{\partial p^\nu}\Big|_{\dot{\gamma}^\flat(s)}  \;,
\end{equation*}
whose \(x^\rho\) component satisfies
\begin{align*}
\frac{d}{ds}\Big|_{s=s_0} (\Psi_s)_* \tilde{X} (x^\rho) &= \frac{\partial}{\partial s}\Big|_{s=s_0} \Big[ \frac{\partial (x^\rho \circ \Psi_s)}{\partial \xi^\alpha}\Big|_{\dot{\gamma}^\flat(0)} \tilde{X}^\alpha\Big] \\[2pt]
&= \frac{\partial}{\partial \xi^\alpha}\Big|_{\dot{\gamma}^\flat(0)} \frac{\partial}{\partial s}\Big|_{s=s_0} (x^\rho \circ \Psi_s) \, \tilde{X}^\alpha \\[2pt]
&= \frac{\partial}{\partial \xi^\alpha}\Big|_{\dot{\gamma}^\flat(0)} \Big(\frac{\partial H}{\partial p^\rho} \circ \Psi_{s_0}\Big) \, \tilde{X}^\alpha \\[2pt]
&= \frac{\partial^2 H}{\partial \xi^\alpha \partial p^\rho} \Big|_{\Psi_{s_0}\big(\dot{\gamma}^\flat(0)\big)} \cdot \frac{\partial \Psi_{s_0}^\alpha}{\partial \xi^{\beta}} \Big|_{\dot{\gamma}^\flat(0)} \tilde{X}^\beta \\[2pt]
&= \frac{\partial^2 H}{\partial \xi^\alpha \partial p^{\rho}} \Big|_{\dot{\gamma}^\flat(s_0)} \cdot \big[(\Psi_{s_0})_* \tilde{X}\big]^\alpha\\[2pt]
&= \frac{\partial^2 H}{\partial x^\kappa \partial p^\rho} \Big|_{\dot{\gamma}^\flat(s_0)} \Jt^\kappa(s_0) + \frac{\partial^2 H}{\partial p^\kappa \partial p^\rho} \Big|_{\dot{\gamma}^\flat(s_0)} \Vt^\kappa(s_0)\;.
\end{align*}
Here, we have used
\begin{equation*}
\frac{d}{ds}\Big|_{s=s_0} (x^\rho \circ \Psi_s) (\xi_0) = \frac{\partial H}{\partial p^\rho} \Big|_{\Psi_{s_0}(\xi_0)} \;,
\end{equation*}
see equation \eqref{characteristics}.
The computation for the \(\frac{\partial}{\partial p^\rho}\) components is analogous.
Thus, if 
\begin{equation*}
\tilde{X}(s) = \Jt^\mu(s) \frac{\partial}{\partial x^\mu}\Big|_{\dot{\gamma}^\flat(s)} + \Vt^\nu(s) \frac{\partial}{\partial p^\nu}\Big|_{\dot{\gamma}^\flat(s)}
\end{equation*}
is a vector solution of \eqref{reduction}, we see that
\begin{equation*}
\pi_*\tilde{X}(s) = \Jt^\mu(s) \frac{\partial}{\partial x^\mu}\Big|_{\gamma(s)}
\end{equation*}
is a Jacobi field along \(\gamma\), where \(\pi : T^*M \to M\) is the canonical projection map. Hence, we can construct a matrix solution \((J,V)\) with invertible \(J\) if, and only if, we can find four everywhere linearly independent Jacobi fields along \(\gamma\). This shows that if we demanded \(J\) to be real valued, we would encounter the same obstruction as in the geometric optics approach, i.e., the solution \(M\) would break down at caustics. 

Moreover, note that the Hamiltonian flow \(\Psi_t\) leaves the symplectic form \(\omega\) on \(T^*M\) invariant, which is given in \(\{x^\mu,p^\nu\}\) coordinates by
\[\begin{pmatrix}
0& \mathbbm{1} \\
-\mathbbm{1} & 0
\end{pmatrix} \;.\]
So in particular, given two vector valued solutions \(\tilde{X}(s) = \Jt^\mu(s) \frac{\partial}{\partial x^\mu}\Big|_{\dot{\gamma}^\flat(s)} + \Vt^\nu(s) \frac{\partial}{\partial p^\nu}\Big|_{\dot{\gamma}^\flat(s)} \) and \(\hat{X}(s) = \hat{J}^\mu(s) \frac{\partial}{\partial x^\mu}\Big|_{\dot{\gamma}^\flat(s)} + \hat{V}^\nu(s) \frac{\partial}{\partial p^\nu}\Big|_{\dot{\gamma}^\flat(s)}\) of \eqref{reduction}, we have that 
\begin{equation}
\label{conserved}
\omega\big(\tilde{X}(s), \hat{X}(s)\big) =     \begin{matrix} \Big(\Jt(s) & \Vt(s) \Big)  \\  &    \end{matrix} 
\begin{pmatrix}
0& \mathbbm{1} \\
-\mathbbm{1} & 0
\end{pmatrix}
\begin{pmatrix}\hat{J}(s) \\ \hat{V}(s)  \end{pmatrix}
\quad \textnormal{ is constant.}\footnote{The reader might find the following remark instructive: Although solutions of \eqref{reduction} are geometric quantities, the splitting in \(J\) and \(V\) is not a geometric one. To be more precise, while \(J\) gives rise to a vector field on \(M\), \(V\) depends on the choice of the coordinates. One can, however, turn this splitting into a geometric one, namely by making use of the splitting of \(T_{\dot{\gamma}^\flat(s)}(T^*M)\) in a vertical and a horizontal subspace, which is induced by the Levi-Civita connection. In this approach, one considers the second covariant derivative of \(f\) instead of the partial derivatives in \eqref{secondorder} and thus obtains an ODE for \(\nabla \nabla \phi\). Again, one can reduce the so obtained equation to a system of linear ODEs for a \(1\)-contravariant and \(1\)-covariant tensor \(J\) and a \(2\)-covariant tensor \(V\) along \(\gamma\) such that \(\tr\, V \otimes J^{-1}\) solves again the original equation for \(\nabla \nabla \phi\). The system of linear ODEs is now equivalent to the Jacobi equation (for a Jacobi tensor \(J\))
\begin{equation*}
D_t^2J + R(J,\dot{\gamma})\dot{\gamma} =0
\end{equation*}
with \(V=D_tJ^\flat\). The background for the reduction of the nonlinear ODE for \(\nabla \nabla \phi\) to a linear second order ODE is provided by equation \eqref{firstderivative} of Appendix \ref{BD}. Given two solutions \(J(s)\) and \(J'(s)\) of the Jacobi equation, we obtain that
\begin{equation*}
g\big(D_tJ(s), J'(s)\big) - g\big(J(s), D_tJ'(s)\big) \quad \textnormal{ is constant.}
\end{equation*}
This follows either from \eqref{conserved} or by a direct computation, making use of the Jacobi equation and the symmetry properties of the Riemannian curvature tensor. In this slightly more geometric approach, the following discussion is then analogous.}
\end{equation}

We now prescribe suitable initial data for \eqref{reduction} such that \(J(s)\) is invertible for all \(s\) and \(\sum_{\mu = 0}^3 V_{\kappa \mu}J^{-1}_{\mu \rho}(s) = M_{\kappa \rho}(s) =: \partial_\kappa \partial_\rho \phi \big(\gamma(s)\big)\) is symmetric, satisfies \eqref{riccati}, \eqref{posdef}, and \eqref{compatible}. Therefore choose \(M(0)\) such that\footnote{Note that the right hand side of ii) is determined by \eqref{firstd}.}
\vspace{3mm}
\begin{quote}
\begin{enumerate}[i)]
\item \(M(0)\) is symmetric \vspace{3mm}
\item \(M(0)_{\mu \nu} \dot{\gamma}^\nu = \dot{(\pmu\phi)}(0) \)  \vspace{3mm} 
\item \(\Ima\big(M(0)_{\mu \nu}\big)dx^\mu\big|_{\gamma(0)} \otimes dx^\nu\big|_{\gamma(0)} \) is positive definite on a three dimensional subspace of \(T_{\gamma(0)}M\) that is transversal to \(\dot{\gamma}\)\vspace{3mm}
\end{enumerate}
\end{quote}
and solve \eqref{reduction} with initial data
\begin{equation}
\label{initialdata}
\begin{pmatrix} J(0) \\ V(0) \end{pmatrix} = \begin{pmatrix} \mathbbm{1} \\ M(0) \end{pmatrix} \;.
\end{equation}
Since \eqref{reduction} is a linear ODE, we get, in the chart we are working with, a global solution   
\begin{equation*}
[0,s_{\mathrm{max}}) \ni s \mapsto
\begin{pmatrix} J(s) \\ V(s) \end{pmatrix} \;.
\end{equation*}
We show that \(J(s)\) is invertible for all \(s\) by contradiction. Thus, assume there is an \(s_0 >0\) such that \(J(s_0)\) is degenerate, i.e., there is a column vector \(0 \neq f \in \C^4\)  such that \(J(s_0) f =0\). We define
\begin{equation}
\label{Xtilde}
\tilde{X}_f(s) =  \big(J(s) f\big)^\mu \frac{\partial}{\partial x^\mu}\Big|_{\dot{\gamma}^\flat(s)} + \big(V(s) f\big)^\nu \frac{\partial}{\partial p^\nu}\Big|_{\dot{\gamma}^\flat(s)}\;,
\end{equation}
which is a vector solution to \eqref{reduction}. Using \eqref{conserved}, we compute
\begin{align*}
0&= \omega\big(\tilde{X}_f(s_0), \overline{\tilde{X}_f(s_0)}\big) = \omega\big(\tilde{X}_f(0), \overline{\tilde{X}_f(0)}\big) = [J(0)f] \cdot [\overline{V(0)} \overline{f}] - [V(0)f] \cdot [\overline{J(0)} \overline{f}] \\[1pt]  
&= -2i f  \cdot \big[\Ima(M(0))\big] \overline{f} \;,
\end{align*}
where we used that \(M(0)\) is symmetric. Since \(\Ima(M(0))\) is positive definite on a three dimensional subspace transversal to \(\dot{\gamma}\), this yields \(f^\mu \frac{\partial}{\partial x^\mu}\Big|_{\gamma(0)} = z \cdot \dot{\gamma}(0)\), for some \(0\neq z \in \C\). Without loss of generality we can assume that \(z=1\), since if necessary we consider \(z^{-1} \cdot f\) instead of \(f\). Using \eqref{initialdata} and ii) of the properties of \(M(0)\), we infer that 
\begin{equation*}
 \tilde{X}_f(0) = \dot{\gamma}^\mu(0) \frac{\partial}{\partial x^\mu}\Big|_{\dot{\gamma}^\flat(0)} + \sum_{\nu =0}^3 \dot{d\phi}_\nu(0) \frac{\partial}{\partial p^\nu}\Big|_{\dot{\gamma}^\flat(0)}\;.
\end{equation*}

On the other hand, since \(s \mapsto \dot{\gamma}^\flat(s) =: \sigma(s) \in T^*M\) is a flow line of \(\Psi_t\), we have that 
\begin{equation*}
\dot{\sigma}(s) = (\Psi_s)_*(\dot{\sigma}(0))\;,
\end{equation*}
and thus, \( s \mapsto \dot{\sigma}(s) \in T(T^*M)\) is a solution of \eqref{reduction}.
Written out in components, we have
\begin{equation}
\label{sigma}
\dot{\sigma}(s) = \dot{\gamma}^\mu(s) \frac{\partial}{\partial x^\mu}\Big|_{\dot{\gamma}^\flat(s)} + \sum_{\nu =0}^3 \dot{d\phi}_\nu(s) \frac{\partial}{\partial p^\nu}\Big|_{\dot{\gamma}^\flat(s)}\;,
\end{equation}
and thus in particular \(\dot{\sigma}(0) = \tilde{X}_f(0)\).   
Since two solutions of \eqref{reduction} that agree initially are actually equal, we infer that 
\begin{equation}
\label{theyagree}
\tilde{X}_f(s) = \dot{\sigma}(s) \textnormal{ for all } s.
\end{equation}
Projecting \eqref{theyagree} down on \(TM\) using \(\pi_*\), we obtain the contradiction
\begin{equation*}
0=  \big(J(s_0)f)^\mu \frac{\partial}{\partial x^\mu}\Big|_{\gamma(s_0)} = \pi_* \tilde{X}_f(s_0) = \pi_*\dot{\sigma}(s_0) = \dot{\gamma}(s_0) \neq 0\;.
\end{equation*} 
%By our initial choice of \(M(0)\) and uniqueness of solutions to the initial value problem for \eqref{reduction}, we have
%\begin{equation}
%\label{compatibility}
%\begin{pmatrix} \dot{\gamma}(s) \\ \dot{d\phi}(s) \end{pmatrix} = \begin{pmatrix} J(s)_{\mu \nu} \dot{\gamma}^\nu(0) \partial_{x^\mu} \\ V(s)_{\mu \nu} \dot{\gamma}^\nu(0) \partial_{p^\mu} \end{pmatrix} \;.
%\end{equation}
%This yields
%\[0= J(s)_{\mu \nu} f^\nu \partial_{x^\mu} = z \dot{\gamma}(s)\;,\]
%which is a contradiction. 
This shows that \(J(s)\) is invertible for all \(s \in [0,s_{\mathrm{max}})\) and hence, we obtain a global solution \(M(s)\) to the Riccati equation. 

Since \(M(0)\) is chosen to be symmetric and the Riccati equation \eqref{riccati} is invariant under transposition, it follows that \(M(s)\) is symmetric for all \(s\).

In order to see that this choice of second derivatives of \(\phi\) is compatible with our prescription of the first derivatives of \(\phi\), \eqref{firstd}, i.e., in order to show  that \eqref{compatible} holds, we choose \(f \in \C^4\) such that \(f^\mu = \dot{\gamma}^\mu(0)\). Recall that we were also led to this choice in the proof of \(J\) being invertible, and so we can deduce from \eqref{Xtilde}, \eqref{sigma} and \eqref{theyagree} that
\begin{equation}
\label{compatibility}
\begin{pmatrix} \dot{\gamma}^\mu(s) \\ \dot{d\phi}_\nu(s) \end{pmatrix} = \begin{pmatrix} J(s)_{\mu \rho} \dot{\gamma}^\rho(0)  \\ V(s)_{\nu \rho} \dot{\gamma}^\rho(0) \end{pmatrix} \;.
\end{equation} 
Using this, the compatibility \eqref{compatible} follows:
\begin{equation*}
M_{\mu \nu}(s) \dot{\gamma}^\nu (s) = \sum_{\rho, \kappa =0}^3 V_{\mu \rho}(s) J^{-1}_{\rho \kappa}(s) J_{\kappa \eta}(s) \dot{\gamma}^\eta(0) = \dot{(\partial_\mu \phi)}(s)\;.
\end{equation*}

Finally, for showing that \eqref{posdef} holds, we compute for \(f \in \C^4\) and using the notation from \eqref{Xtilde}
\begin{align*}
\omega\big(\tilde{X}_f(s), \overline{\tilde{X}_f(s)}\big) &= \big[J(s)f\big] \cdot \big[\overline{V(s)f}\big] - \big[V(s) f\big]\cdot \big[\overline{J(s)f}\big] = \big[J(s)f\big] \cdot \big[\overline{M(s) J(s) f}\big] - \big[M(s) J(s) f\big] \cdot \big[\overline{J(s)f}\big] \\
&= -2i \big[\Ima(M(s)) J(s)f\big]\cdot \big[\overline{J(s)f}\big] \;,
\end{align*}
where we made use of the symmetry of \(M(s)\).
Together with \eqref{conserved}, we obtain
\begin{equation*}
-2i \big[\Ima(M(0)) f\big]\cdot \big[\overline{f}\big] = -2i \big[\Ima(M(s)) J(s)f\big]\cdot \big[\overline{J(s)f}\big]\;.
\end{equation*}
Since \(J(s)\) is an isomorphism for all \(s\), this shows that \(\Ima(M(s))\) stays positive definite on a three dimensional subspace transversal to \(\dot{\gamma}(s)\), where we also use \eqref{compatibility}.
This finishes the construction of the second derivatives of \(\phi\) in a coordinate chart. 

Staying in this chart, the condition \eqref{gausssecondcond} is a linear first order ODE for a function \(a(s)\) along \(\gamma\), and thus prescribing initial data \(a(0) \neq 0\), the existence of a global solution \(a(s)\) with respect to this chart is guaranteed. Writing down the formal Taylor series up to order two for \(\phi\) and up to order zero for \(a\) in the slice coordinates (special case of Borel's Lemma), we construct two functions \(a, \phi \in C^\infty(U, \C)\) that satisfy \eqref{real}, \eqref{posdef}, \eqref{gaussfirstcond}, \eqref{gausssecondcond} and \eqref{initialenergycond}, where \(U\) is the domain of the slice coordinate chart.
\newline
\newline
Let \(\gamma : [0,S) \to M\) be the affine parametrisation of \(\gamma\), where \(0< S \leq \infty\). Let us for the following presentation assume that \(S=\infty\) -- the case \(S < \infty\) is even simpler.
We cover \(\I(\gamma)\) by slice coordinate charts \((U_k, \varphi_k)\), \(k \in \mathbb{N}\), such that there is a partition of \([0,\infty)\) by intervals \([s_{k-1}, s_k]\) with \(s_0 =0\) and \(s_{k-1} < s_k\) that satisfies \(\gamma\big([s_{k-1},s_k]\big) \subseteq U_k\).  We then construct functions \(a_k, \phi_k \in C^\infty(U_k, \C)\) that satisfy \eqref{real}, \eqref{posdef}, \eqref{gaussfirstcond}, \eqref{gausssecondcond} (and \eqref{initialenergycond} for \(k=1\)) as follows: The case \(k=1\) was presented above. For \(k>1\) we repeat the construction from above with some slight modifications: If \(M_{k-1}(s_k)\) denotes the solution of \eqref{riccati} in the chart \(U_{k-1}\) at time \(s_{k-1}\), we now express \(M_{k-1}(s_k)\) in the \(\varphi_k\) coordinates\footnote{The transformation is of course given by the rule by which second coordinate derivatives of scalar functions transform.} and solve \eqref{riccati} in both time directions. We proceed analogously for \(a\).

Extending \(\{U_k\}_{k\in \mathbb{N}}\)  to an open cover of \(M\) by \(U_0 \subseteq M\) in such a way that \(U_0 \cap \I(\gamma) = \emptyset\) and taking a partition of unity \(\{\eta_k\}_{k \in \mathbb{N}_0}\) subordinate to this open cover, we glue all the local functions  \(\phi_k\) and  \(a_k\) together to obtain  \(\phi := \sum_{k=1}^\infty \phi_k \eta_k\) and \(a := \sum_{k=1}^\infty a_k \eta_k\), which are in \(C^\infty(M, \C)\) and satisfy \eqref{real}, \eqref{posdef}, \eqref{gaussfirstcond}, \eqref{gausssecondcond} and \eqref{initialenergycond}.
This finally completes the proof of Theorem \ref{localised}. 
\newline
\newline
For future reference, we make the following
\begin{definition}
\label{defgauss}
Let \((M,g)\) be a time oriented globally hyperbolic Lorentzian manifold with time function \(t\), foliated by the level sets \(\Sigma_\tau = \{t=\tau\}\). Furthermore, let \(\gamma : [0,S) \to M\) be an affinely parametrised future directed null geodesic with \(\gamma(0) \in \Sigma_0\), where \(0< S \leq \infty\), and let \(N\) be a timelike, future directed vector field. 

Given functions \(a, \phi \in C^\infty(M,\C)\) that satisfy \eqref{real}, \eqref{posdef}, \eqref{gaussfirstcond}, \eqref{gausssecondcond}, \(a\big(\I(\gamma) \cap \so\big) \neq 0\) and \eqref{firstd}, we call
the function 
\begin{equation*}
u_{\lambda, \N} = a_\N e^{i\lambda \phi}
\end{equation*}
a \emph{Gaussian beam along \(\gamma\) with structure functions \(a\) and \(\phi\) and with parameters \(\lambda\) and \(\N\)}. Here, \(a_\N = a \cdot \chi_\N = a \cdot \tilde{\chi} \cdot \tilde{\chi}_\N\) with \(\tilde{\chi}\) and \(\tilde{\chi}_\N\) as in the proof of Lemma \ref{red}. Moreover, we call the function
\begin{equation*}
\tilde{u}_{\lambda, \N} = \frac{u_{\lambda, \N}}{\sqrt{E_0^N(u_{\lambda, \N})}} \cdot \sqrt{E}
\end{equation*}
a \emph{Gaussian beam along \(\gamma\) with structure functions \(a\) and \(\phi\), with parameters \(\lambda\) and \(\N\), and with initial \(N\)-energy \(E\)}, where \(E\) is a strictly positive real number. Let us emphasise, that when we say `a Gaussian beam along \(\gamma\)', \(\gamma\) encodes here not only the image of \(\gamma\), but also the affine parametrisation.
\end{definition}
We end this section with the remark that for the sole \emph{construction} of the Gaussian beams the assumption of the global hyperbolicity of \((M,g)\) can be replaced by the assumption that the null geodesic \(\gamma: \R \supseteq I  \to M\) is a smooth embedding, i.e., in particular \(\gamma(I)\) being an embedded submanifold. Moreover, note that if \(\gamma : \R \supseteq I \to M\) is a smooth injective immersion and if \([a,b] \subseteq I\) with \(a,b \in \R\), then \(\gamma|_{(a,b)} : (a,b) \to M\) is a smooth embedding. It thus follows that the above construction is always possible for null geodesics with no self-intersections on general Lorentzian manifolds  - at least up to some finite affine time in the domain of \(\gamma\).

\subsection{Geometric characterisation of the energy of Gaussian beams}
\label{CharacterisationEnergy}

In this section we characterise the energy of a Gaussian beam in terms of the energy of the underlying null geodesic. The following theorem is the main result of Part I of this paper:

\begin{theorem}
\label{main}
Let \((M,g)\) be a time oriented globally hyperbolic Lorentzian manifold with time function \(t\), foliated by the level sets \(\Sigma_\tau = \{t=\tau\}\). Moreover, let \(N\) be a timelike future directed vector field and \(\gamma : [0, S) \to M\) an affinely parametrised future directed null geodesic with \(\gamma(0) \in \so\), where \(0 < S \leq \infty\). 

For any \(T>0\) with \(\I(\gamma) \cap \Sigma_T \neq \emptyset\) and for any \(\mu >0\) there exists a \(\lambda_0 >0\) such that any Gaussian beam \(\tilde{u}_{\lambda, \N}\) along \(\gamma\) with structure functions \(a\) and \(\phi\), with parameters \(\lambda \geq \lambda_0\) and \(\N\), and with initial \(N\)-energy
equal to \(-g(N,\dot{\gamma})\big|_{\gamma(0)}\) satisfies
\begin{equation}
\label{maineq}
\Big|\,E^N_\tau(\tilde{u}_{\lambda, \N}) - \big[-g(N,\dot{\gamma})\big|_{\I(\gamma)\cap\st}\big] \Big| < \mu \qquad \forall \; 0\leq \tau \leq T \;.
\end{equation}
\end{theorem}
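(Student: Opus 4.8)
The plan is to reduce the statement to one asymptotic evaluation of the $N$-energy together with an approximate conservation law. Write $u_\lambda := a_{\N_0}e^{i\lambda\phi}$ for the un-normalised beam. By Definition \ref{defgauss} the normalised beam is $\tilde u_{\lambda,\N_0} = u_\lambda\sqrt{-g(N,\dot\gamma)|_{\gamma(0)}}/\sqrt{E^N_0(u_\lambda)}$, and since the $N$-energy is quadratic in its argument,
\begin{equation*}
E^N_\tau(\tilde u_{\lambda,\N_0}) = \big(-g(N,\dot\gamma)\big|_{\gamma(0)}\big)\cdot\frac{E^N_\tau(u_\lambda)}{E^N_0(u_\lambda)}\;.
\end{equation*}
As $t$ is a time function, $\gamma$ meets each $\st$ in a single point $\gamma(s_\tau)=\I(\gamma)\cap\st$, and \eqref{maineq} will follow once I show, uniformly for $0\le\tau\le T$,
\begin{equation*}
\frac{E^N_\tau(u_\lambda)}{E^N_0(u_\lambda)}\;\longrightarrow\;\frac{g(N,\dot\gamma)\big|_{\gamma(s_\tau)}}{g(N,\dot\gamma)\big|_{\gamma(0)}}\qquad(\lambda\to\infty)\;,
\end{equation*}
after shrinking $\N_0$ so that \eqref{Imphi} and condition iii) of Lemma \ref{red} hold on $\supp(a_{\N_0})$.

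First I would isolate the leading asymptotics of $E^N_\tau(u_\lambda)$. Since $du_\lambda = i\lambda a\,d\phi\,e^{i\lambda\phi}+O(1)$, the integrand $J^N(u_\lambda)\cdot\nt = \T(u_\lambda)(N,\nt)$ equals, to top order in $\lambda$,
\begin{equation*}
\lambda^2|a|^2\Big[\Real\big(\overline{N(\phi)}\,\nt(\phi)\big)-\tfrac12 g(N,\nt)\,g^{-1}(d\phi,\overline{d\phi})\Big]e^{-2\lambda\phi_2}\;.
\end{equation*}
On $\gamma$ the conditions \eqref{real} and \eqref{firstd} make $d\phi=\dot\gamma^\flat$ real, so $N(\phi)=g(N,\dot\gamma)$ and $\nt(\phi)=g(\nt,\dot\gamma)$ are real, while nullity of $\gamma$ forces the trace term $g^{-1}(d\phi,\overline{d\phi})=g(\dot\gamma,\dot\gamma)$ to vanish there. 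Because $\phi_2$ vanishes to second order along $\gamma$ with $\I(\nabla\nabla\phi)$ positive definite on $T\st$ — which, by \eqref{posdef}, is legitimate since the spacelike $\st$ is transversal to the null direction $\dot\gamma$ — Laplace's method, via the exact scaling of Lemma \ref{decaythroughGauss}, localises the integral at $\gamma(s_\tau)$. Evaluating the smooth prefactor there and factoring out $g(N,\dot\gamma)|_{\gamma(s_\tau)}$ gives
\begin{equation*}
E^N_\tau(u_\lambda)=g(N,\dot\gamma)\big|_{\gamma(s_\tau)}\cdot I(\tau)\cdot\big(1+o(1)\big)\;,\qquad I(\tau):=\lambda^2\!\int_{\st}|a|^2\,g(\grad\phi_1,\nt)\,e^{-2\lambda\phi_2}\,\vgt\;,
\end{equation*}
where $\grad\phi_1$ is the real vector field extending $\dot\gamma$ off $\gamma$; by \eqref{initialenergycond} one has $I(\tau)\sim c\,\lambda^{1/2}$ with $c\neq0$, and the two factors have opposite signs so the energy is positive.

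The crux is to prove $I(\tau)/I(0)\to1$, and this is where the approximate conservation law replacing \eqref{importantcons} enters. The key observation is that $I(\tau)=\lambda\int_{\st}J\cdot\nt\,\vgt\,(1+o(1))$ for the charge current $J^\mu:=\I\big(\overline{u_\lambda}\,\partial^\mu u_\lambda\big)$, whose leading term is exactly $\lambda|a|^2\,\partial^\mu\phi_1\,e^{-2\lambda\phi_2}$. A direct computation yields the identity $\nabla_\mu J^\mu=\I\big(\overline{u_\lambda}\,\Box u_\lambda\big)$, the right-hand side being the conservation defect. Applying the divergence theorem on the slab $\Rot$ — whose boundary is $\so\cup\st$ and on which $u_\lambda$ has compact support by condition iii) of Lemma \ref{red}, so that no lateral contributions arise — gives
\begin{equation*}
\int_{\st}J\cdot\nt\,\vgt-\int_{\so}J\cdot\no\,\vgo=\int_{\Rot}\I\big(\overline{u_\lambda}\,\Box u_\lambda\big)\,\vg\;.
\end{equation*}
Estimating the right-hand side by the scaling of Lemma \ref{decaythroughGauss}, using that $d\phi\cdot d\phi$ vanishes to second order and $2\grad\phi(a)+\Box\phi\,a$ to zeroth order on $\gamma$ by \eqref{gaussfirstcond} and \eqref{gausssecondcond}, shows it is $O(\lambda^{-1})$, of strictly lower order than each flux, which is $\sim\lambda^{-1/2}$. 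Hence the two fluxes agree to leading order and $I(\tau)=I(0)\,(1+o(1))$, uniformly in $\tau$.

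Combining the last two paragraphs gives $E^N_\tau(u_\lambda)/E^N_0(u_\lambda)\to g(N,\dot\gamma)|_{\gamma(s_\tau)}/g(N,\dot\gamma)|_{\gamma(0)}$, uniformly on $[0,T]$ because every error estimate is carried out on the fixed compact set $\Rot\cap\supp(a_{\N_0})$; multiplying by $-g(N,\dot\gamma)|_{\gamma(0)}$ then yields \eqref{maineq} for all $\lambda\ge\lambda_0$. I expect the main obstacle to lie in the two asymptotic bookkeeping steps: (i) showing that evaluating the prefactor at $\gamma(s_\tau)$ and discarding the subleading tensorial terms costs only $o(\lambda^{1/2})$ uniformly in $\tau$, and — the genuine heart of the argument — (ii) verifying that the defect $\int_{\Rot}\I(\overline{u_\lambda}\Box u_\lambda)$ is of strictly lower order than the fluxes, since it is precisely this that upgrades the merely \emph{approximate} conservation of $J$ into the exact asymptotic identity $I(\tau)\sim I(0)$ needed to isolate the purely geometric factor $-g(N,\dot\gamma)$.
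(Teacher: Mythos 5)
Your proposal is correct and takes essentially the same route as the paper's proof: your charge current \(\I(\overline{u_\lambda}\,\partial^\mu u_\lambda)\) coincides to leading order with the paper's current \(X_{\lambda,\N}=\lambda^2|a_\N|^2 e^{-2\lambda\phi_2}\,\grad\phi_1\), and your defect-versus-flux order counting (defect \(O(\lambda^{-1})\), fluxes \(\sim\lambda^{-1/2}\)) is precisely the paper's approximate conservation law \eqref{appcons}, derived in the same way from \eqref{gaussfirstcond}, \eqref{gausssecondcond} and the scaling of Lemma \ref{decaythroughGauss}. The only differences are matters of execution rather than approach --- you evaluate the prefactor \(N\phi_1\) at \(\I(\gamma)\cap\st\) by Laplace-type localisation where the paper instead uses uniform continuity of \(N\phi_1\) on a shrunken neighbourhood \(\N_1(\delta)\), and your parenthetical ``opposite signs'' should read ``the same sign'', since both \(g(N,\dot{\gamma})\big|_{\I(\gamma)\cap\st}\) and \(I(\tau)\) are negative (as \(\nt\phi_1<0\) near \(\gamma\)), their product giving the positive energy.
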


Before we give the proof, we make a couple of remarks:
\begin{enumerate}[i)]
\item
The only information about a Gaussian beam we made use of in Theorem \ref{localised}, apart from it being an approximate solution, was that it is supported in a given neighbourhood \(\N\) of the null geodesic \(\gamma\). This then yielded, together with \eqref{thmlocapp}, an estimate on the energy outside of the neighbourhood \(\N\) of the actual solution to the wave equation, i.e., we could construct solutions to the wave equation with \emph{localised} energy. However, Theorem \ref{localised} does not make any statement about the \emph{temporal behaviour} of this localised energy. The above theorem fills this gap by investigating the temporal behaviour of the energy of the approximate solution, i.e., of the Gaussian beam. Together with \eqref{thmlocapp} (or even with \eqref{laterimportant}!) this then gives an estimate on the temporal behaviour of the localised energy of the actual solution to the wave equation.
\item
Note that if \(N\) is a timelike Killing vector field, the \(N\)-energy  \(-g(N, \dot{\gamma})\) of the null geodesic \(\gamma\) is constant, and thus, so is approximately the \(N\)-energy of the Gaussian beam.
\item
By our Definition \ref{defgauss} a Gaussian beam is a complex valued function. However, by taking the real or the imaginary part, one can also define a real valued Gaussian beam. The result of Theorem \ref{main} also holds true in this case, and can be proved using exactly the same technique - only the computations become a bit longer, since we have to deal with more terms.
\item 
Although we have stated the above theorem again using the general assumptions needed for Theorem \ref{localised}, we actually do not need more assumptions than we need for the construction of a Gaussian beam, cf.\ the final remark of the previous section.
\end{enumerate}

\begin{proof}
Recall from Definition \ref{defgauss} that a Gaussian beam \(\tilde{u}_{\lambda, \N}\) along \(\gamma\) with structure functions \(a\) and \(\phi\), with parameters \(\N\) and \(\lambda\), and with initial \(N\)-energy equal to \(-g(N,\dot{\gamma})\big|_{\gamma(0)}\) is a function
\begin{equation*}
\tilde{u}_{\lambda, \N} = \frac{u_{\lambda, \N}}{\sqrt{E_0^N(u_{\lambda, \N})}} \cdot \sqrt{-g(N,\dot{\gamma})\big|_{\gamma(0)}} 
= \frac{a_\N e^{i\lambda \phi}}{\sqrt{E_0^N(u_{\lambda, \N})}} \cdot \sqrt{-g(N,\dot{\gamma})\big|_{\gamma(0)}}\;,
\end{equation*}
where the functions \(a_\N\) and \(\phi\) satisfy  \eqref{real}, \eqref{posdef}, \eqref{gaussfirstcond}, \eqref{gausssecondcond}, \eqref{initialenergycond}, \eqref{firstd}, \(\supp(a_\N) \subseteq \N\), \(\N \cap R_{[0,T]}\) is relatively compact for all \(T>0\) with \(\Sigma_T \cap \I(\gamma) \neq \emptyset\),  and for a cover of \(\gamma\) with slice coordinate patches \eqref{Imphi} holds for all \(x \in \supp(a_\N)\). 

We will show
\begin{equation}
\label{Aim}
E_\tau^N(\tilde{u}_{\lambda, \N}) = \frac{E_\tau^N(u_{\lambda, \N})}{E_0^N(u_{\lambda, \N})} \cdot \Big[-g(N,\dot{\gamma})\big|_{\gamma(0)}\Big] = -g(N,\dot{\gamma})\big|_{\I(\gamma) \cap \st} + o(\lambda) \;,
\end{equation}
where $o (\lambda)$ goes to zero uniformly in $0 \leq \tau \leq T$ for $\lambda \to \infty$. This would then prove the theorem.

In the following we compute the leading order term of $E_\tau^N(u_{\lambda, \N})$ in $\lambda$:
\begin{align*}
J^N(u_{\lambda, \N}) \cdot \nt &= \Real(N\uln \cdot \overline{\nt \uln}) - \frac{1}{2}g(N,\nt) \, d\uln \cdot \overline{d \uln} \\[6pt]
&= \lambda^2 |a_\N|^2 N\phi_1 \cdot \nt \phi_1 \cdot \elp + \lambda^2 |a_\N|^2 N\phi_2 \cdot \nt \phi_2 \cdot \elp + \bigO(\lambda) \cdot \elp \\[6pt]
&\qquad -\frac{1}{2}g(N,\nt)\Big[ \lambda^2 |a_\N|^2 \, (d\phi_1 \cdot d\phi_1) \, \elp + \lambda^2 |a_\N|^2 \,(d\phi_2 \cdot d\phi_2)\, \elp + \bigO(\lambda) \cdot \elp\Big] \;.
\end{align*}
Note that \(d\phi_2\big|_{\gamma(\tau)} =0\), so these terms are of lower order after integration over \(\st\). The same holds for the \(d\phi_1 \cdot d\phi_1\) term. Thus, we get
\begin{equation}
\label{EnHighTerm}
E^N_\tau(\uln) = \underbrace{\lambda^2 \, \int_{\st} |a_\N|^2 \,N\phi_1 \cdot \nt\phi_1 \, \elp \;\vgt}_{=\bigO(\lambda^{\frac{1}{2}})} \; + \underbrace{\textnormal{ lower order terms }}_{=\bigO(1)} \;.
\end{equation}

The main part of the proof is an approximate conservation law. Recall that \(a_\N\) and \(\phi\) satisfy \eqref{gaussfirstcond} and \eqref{gausssecondcond}. These equations yield
\begin{equation}
\label{firstcons}
\begin{aligned}
\grad\, \phi \,\big(|a_\N|^2\big) &= \grad\, \phi \, (a_\N) \cdot \overline{a_\N} + a_\N \cdot \grad \, \phi\,(\overline{a_\N}) \\&= -\frac{1}{2}\big( \Box \phi \cdot a_\N \overline{a_\N} +a_\N \, \overline{\Box \phi} \cdot \overline{a_\N} \big) = -\Real (\Box \phi) |a_\N|^2 \quad \textnormal{ along } \gamma
\end{aligned}
\end{equation}
and
\begin{equation*}
d\phi \cdot d\phi = (d\phi_1 + i d\phi_2) \cdot (d\phi_1 + i d\phi_2) = d\phi_1 \cdot d\phi_1 - d\phi_2 \cdot d\phi_2 + 2i \, d\phi_1 \cdot d\phi_2
\end{equation*}
vanishes to second order along \(\gamma\), thus in particular
\begin{equation}
\label{secondcons}
d\phi_1 \cdot d\phi_2 = \grad \phi_1 \, (\phi_2) \quad \textnormal{vanishes along }\gamma \textnormal{ to second order.}
\end{equation}
Lemma \ref{decaythroughGauss} $(ii)$, together with \eqref{firstcons} and \eqref{secondcons}, show that the current
\begin{equation*}
X_{\lambda, \N} =\lambda^2 \cdot|a_\N|^2 \elp \,\grad\,\phi_1
\end{equation*}
is approximately conserved in the sense that
\begin{equation*}
\begin{aligned}
&\int_{\Rot} \mathrm{div} \,X_{\lambda, \N} \; \vg \\&\quad= \lambda^2 \cdot \int_{\Rot} \Big(\underbrace{\big[\grad \phi_1 \, (|a_\N|^2) + \Box \phi_1 \cdot |a_\N|^2 \big] \elp}_{=\lambda^{-\frac{1}{2} }\cdot \lambda^{-\frac{3}{2}} = \lambda^{-2} \textnormal{ after integration}}  - \underbrace{2 \lambda \grad\phi_1 \,(\phi_2) \cdot |a_\N|^2 \elp}_{= \lambda \cdot \lambda^{-\frac{3}{2}}\cdot \lambda^{-\frac{3}{2}} = \lambda^{-2} \textnormal{ after int.}} \Big) \;\vg = \bigO(1) \;,
\end{aligned}
\end{equation*}
but
\begin{equation*}
\int_{\st} X_{\lambda,\N} \cdot \nt \; \vgt = \lambda^2 \cdot \int_{\st} |a_\N|^2 \nt\phi_1 \,\elp \;\vgt = \bigO(\lambda^{\frac{1}{2}})\;.
\end{equation*}
%where \(\Rot\) is here some tube around \(\gamma\) with lids \(\so\) and \(\st\), and without loss of generality we can assume that \(\N\) is a neighbourhood such that \eqref{Imphi} holds.
In particular, we obtain\footnote{\label{foot}In the geometric optics approximation we have indeed a proper conservation law, which is interpreted in the physics literature as conservation of photon number, cf.\ for example \cite{MTW}, Chapter 22.5.}
\begin{equation}
\label{appcons}
\begin{aligned}
\Big| \, \lambda^2 \cdot \int_{\st} &|a_\N|^2 \nt\phi_1 \,\elp \;\vgt - \lambda^2 \cdot \int_{\so} |a_\N|^2 \no\phi_1 \,\elp \;\vgo \,\Big| \\&= \Big| \, \int_{\Rot} \mathrm{div} \,X_{\lambda,\N} \; \vg \,\Big| = \bigO(1)\;.
\end{aligned}
\end{equation}

We also observe that by Lemma \ref{decaythroughGauss} $(ii)$ we have
\begin{equation}
\label{EvOnGeo}
\lambda^2 \cdot \int_{\st} |a_\N|^2 \big(N\phi_1 - N\phi_1 \big|_{\I(\gamma)\cap \st}\big) \cdot \nt\phi_1 \,\elp \;\vgt = \bigO(1)\;.
\end{equation}
It thus follows from \eqref{EnHighTerm}, \eqref{appcons}, and \eqref{EvOnGeo} that
\begin{equation*}
\begin{split}
E^N_\tau(\uln) &= \lambda^2 \, \int_{\st} |a_\N|^2 \,N\phi_1 \cdot \nt\phi_1 \, \elp \;\vgt + \bigO(1) \\[3pt]
&= \lambda^2 \cdot N\phi_1\big|_{\I(\gamma)\cap \st} \, \int_{\st} |a_\N|^2  \nt\phi_1 \, \elp \;\vgt + \bigO(1) \\[3pt]
&= \lambda^2 \cdot N\phi_1\big|_{\I(\gamma)\cap \st} \, \int_{\so} |a_\N|^2  \nt\phi_1 \, \elp \;\vgo + \bigO(1) \\[3pt]
&= \frac{N\phi_1\big|_{\I(\gamma)\cap \st}}{N\phi_1\big|_{\I(\gamma)\cap \so}} \cdot E^N_0(\uln) + \bigO(1) \\[3pt]
&=  \frac{g(N, \dot{\gamma})\big|_{\I(\gamma)\cap \st}}{g(N,\dot{\gamma})\big|_{\I(\gamma)\cap \so}} \cdot  E^N_0(\uln) + \bigO(1) \;.
\end{split}
\end{equation*}
Substituting this into the expression for $E^N_{\tau}(\tilde{u}_{\lambda,\N})$, i.e., the first equation in \eqref{Aim}, we obtain the second equation of \eqref{Aim}. This finishes the proof of Theorem \ref{main}.
\end{proof}

\subsection{Some general theorems about the Gaussian beam limit of the wave equation}
\label{generalthms}

We can now make a much more detailed statement about the behaviour of solutions \(v\) of the wave equation in the Gaussian beam limit than Theorem \ref{localised} does:

\begin{theorem}
\label{symbiosis}
Let \((M,g)\) be a time oriented globally hyperbolic Lorentzian manifold with time function \(t\), foliated by the level sets \(\Sigma_\tau = \{t=\tau\}\), where \(\Sigma_0\) is a Cauchy hypersurface. Furthermore, let \(\gamma : [0, S) \to M\) be an affinely parametrised future directed null geodesic with \(\gamma(0) \in \so\), where \(0 < S \leq \infty\).  Finally, let \(N\) be a timelike, future directed vector field. 

For any neighbourhood \(\N\) of \(\gamma\), for any \(T>0\) with \(\Sigma_T \cap \I(\gamma) \neq \emptyset\), and for any \(\mu >0\), there exists a solution \(v \in C^\infty(M,\C)\) of the wave equation \eqref{waveeq} with \(E^N_0(v) = - g(N, \dot{\gamma})\big|_{\gamma(0)}\) such that
\begin{equation}
\label{energyinside}
\Big| E^N_{\tau, \N \cap \Sigma_\tau}(v) - \big[ - g(N,\dot{\gamma})\big|_{\I{\gamma} \cap \st} \big]\Big| < \mu \qquad \quad \forall \; 0 \leq \tau \leq T\;
\end{equation}
and\footnote{We denote the complement of $\N$ in $M$ with $\N^c$.}
\begin{equation}
\label{energyoutside}
E^N_{\tau, \N^c \cap \st} (v) < \mu \qquad \quad \forall \; 0 \leq \tau \leq T\;,
\end{equation}
provided that we have on \(R_{[0,T]} \cap J^+(\N \cap \so)\)
\begin{equation}
\label{energyestimatecond2}
\begin{aligned}
\frac{1}{|dt(\nt)|} + |g(N,\nt)| &\leq C <\infty \quad \textnormal{ and }\quad  0 < c \leq |g(N,N)| \\
|\nabla N(\nt,\nt)| + \sum_{i=1}^3|\nabla &N(\nt,e_i)| +\sum_{i,j =1}^3|\nabla N(e_i,e_j)| \leq C  < \infty
\end{aligned}
\end{equation}
where \(c\) and \(C\) are positive constants and \(\{\nt,e_1,e_2,e_3\}\) is an orthonormal frame.

Moreover, by choosing \(\N\), if necessary, a bit smaller, \eqref{energyinside} holds independently of \eqref{energyestimatecond2}. 
\end{theorem}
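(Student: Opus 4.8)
The plan is to combine the construction underlying Theorem \ref{localised} with the energy characterisation of Theorem \ref{main}, exploiting the fact that the localised $N$-energy is the square of a seminorm. Concretely, I would take $\tilde{u}=\tilde{u}_{\lambda,\N_0}$ to be a Gaussian beam along $\gamma$ with initial $N$-energy equal to $-g(N,\dot{\gamma})\big|_{\gamma(0)}$ (Definition \ref{defgauss}), supported in a neighbourhood $\N_0\subseteq\N$, and let $v=v_\lambda$ be the actual solution of \eqref{waveeq} with the same initial data on $\so$, exactly as in the proof of Theorem \ref{localised}. The structural observation driving everything is that for future directed timelike $N$ and $\nt$ the integrand $\T(u)(N,\nt)$ of the $N$-energy is a nonnegative quadratic form in $du$; hence $u\mapsto \big(E^N_{\tau,A}(u)\big)^{1/2}$ is a seminorm for every $A\subseteq\st$, so that the triangle inequality
\[\Big|\big(E^N_{\tau,A}(v)\big)^{1/2}-\big(E^N_{\tau,A}(\tilde u)\big)^{1/2}\Big|\leq \big(E^N_{\tau,A}(v-\tilde u)\big)^{1/2}\]
is at my disposal. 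This is the mechanism by which ``closeness in energy'' of $v$ and $\tilde u$ transfers the explicit energy information about $\tilde u$ over to $v$, despite the energy being quadratic rather than linear.

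For the inside estimate \eqref{energyinside}, I would first apply Theorem \ref{main} to the beam: since $\supp(\tilde u)\subseteq\N_0\subseteq\N$ one has $E^N_{\tau,\N\cap\st}(\tilde u)=E^N_\tau(\tilde u)$, and Theorem \ref{main} supplies $\N_0$ and $\lambda_0$ so that this quantity lies within $\mu/3$ of $-g(N,\dot{\gamma})\big|_{\I(\gamma)\cap\st}$ for all $0\leq\tau\leq T$. It then remains to pass to $v$. Applying the triangle inequality above with $A=\N\cap\st$, and using that $-g(N,\dot{\gamma})\big|_{\I(\gamma)\cap\st}$ is bounded on $[0,T]$ (the relevant arc of $\gamma$ is compact), so that $\big(E^N_\tau(\tilde u)\big)^{1/2}$ is uniformly bounded, I would convert smallness of $\big(E^N_{\tau,\N\cap\st}(v-\tilde u)\big)^{1/2}$ into smallness of the difference of the energies themselves via $|a-b|=|\sqrt a-\sqrt b|\,|\sqrt a+\sqrt b|$. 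The only routine bookkeeping here is choosing the auxiliary tolerance $\mu'$ in the approximation of $v$ by $\tilde u$ small enough relative to $\mu$ and the bound on $-g(N,\dot{\gamma})$.

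For the outside estimate \eqref{energyoutside} the argument is short: since $\tilde u$ vanishes on $\N^c$, we have $E^N_{\tau,\N^c\cap\st}(v)=E^N_{\tau,\N^c\cap\st}(v-\tilde u)\leq E^N_\tau(v-\tilde u)<\mu'$. This is a bound on the \emph{global} energy of $v-\tilde u$, which is precisely what the energy estimate \eqref{energyestimate} delivers, and it is exactly here that condition \eqref{energyestimatecond2} is genuinely needed, as it guarantees \eqref{energyestimate} on $R_{[0,T]}\cap J^+(\N\cap\so)$.

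Finally, for the \emph{Moreover} clause I would invoke Remark \ref{rem}: by shrinking $\N$ so that $\Sigma_T\cap\N$ is relatively compact and $\N\cap R_{[0,T]}\subseteq J^-(\Sigma_T\cap\N)$, the energy estimate can instead be run in the \emph{relatively compact} region $J^-(\Sigma_T\cap\N)\cap J^+(\so)$, where the quantities appearing in \eqref{energyestimatecond2} are automatically controlled by compactness. This yields $E^N_{\tau,\N\cap\st}(v-\tilde u)<\mu'$ --- precisely \eqref{laterimportant} --- \emph{without} assuming \eqref{energyestimatecond2}. Feeding this into the inside argument of the second paragraph produces \eqref{energyinside} unconditionally, because the characterisation of $\tilde u$ via Theorem \ref{main} needs nothing beyond the bare construction of the Gaussian beam (as noted in the remarks following Theorem \ref{main}). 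The main obstacle, and the reason the weakening reaches only \eqref{energyinside} and not \eqref{energyoutside}, is that the outside bound controls energy that may have leaked arbitrarily far from $\gamma$ and hence cannot be captured in any compact region, so the global estimate --- and with it \eqref{energyestimatecond2} --- is unavoidable there; the only genuine care required for the inside bound is to apply the seminorm/triangle-inequality argument to the \emph{localised} energies $E^N_{\tau,\N\cap\st}$, which remain seminorms precisely because of the pointwise nonnegativity of $\T(u)(N,\nt)$.
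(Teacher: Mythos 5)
Your proposal is correct and follows essentially the same route as the paper: the paper's own proof is precisely the combination of Theorem \ref{localised}, Theorem \ref{main}, the second part of Remark \ref{rem} (i.e.\ the local estimate \eqref{laterimportant} for the \emph{Moreover} clause), and the triangle inequality for the square root of the \(N\)-energy, which is exactly the seminorm mechanism you identify. Your write-up simply makes explicit the bookkeeping (boundedness of \(-g(N,\dot{\gamma})\) on \([0,T]\), the factorisation \(|a-b|=|\sqrt{a}-\sqrt{b}|\,|\sqrt{a}+\sqrt{b}|\), and the role of \eqref{energyestimatecond2} for the outside bound) that the paper leaves to the reader.
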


\begin{proof}
This follows easily from Theorem \ref{localised}, Theorem \ref{main}, the second part of Remark \ref{rem} and the triangle inequality for the square root of the \(N\)-energy. 
\end{proof}  
Let us again remark that the solution \(v\) of the wave equation in Theorem \ref{symbiosis} can also be chosen to be real valued.

The next theorem is a direct consequence of Theorem \ref{symbiosis} and can be used in particular, but not only for, proving upper bounds on the rate of the energy decay of waves on globally hyperbolic Lorentzian manifolds if we only allow the initial energy on the right hand side of the decay statement.

\begin{theorem}
\label{LED}
Let \((M,g)\) be a time oriented globally hyperbolic Lorentzian manifold with time function \(t\), foliated by the level sets \(\Sigma_\tau = \{t=\tau\}\), where \(\Sigma_0\) is a Cauchy hypersurface. Furthermore, let \(\mathcal{T}\) be an open subset of \(M\). Assume there is an affinely parametrised future directed null geodesic \(\gamma : [0, S) \to M\) with \(\gamma(0) \in \Sigma_0\), where \(0< S \leq \infty\), that is completely contained in \(\mathcal{T}\).
Let \[\tau^* := \sup\big{\{}\hat{\tau} \in [0,\infty) \,\big|\,\I(\gamma) \cap \st \neq \emptyset \textnormal{ for all } 0 \leq \tau < \hat{\tau}\big{\}}\;.\] Moreover, let \(N\) be a timelike, future directed vector field and \(P : [0,\tau^*) \to (0,\infty)\) a function\footnote{There is no assumption on the regularity of the function $P$.}.

If there is no constant \(C>0\) such that 
\begin{equation*}
 -g(N,\dot{\gamma})\big|_{\I(\gamma) \cap \st} \leq  P(\tau) C
\end{equation*}
holds for all \(0 \leq \tau < \tau^*\), then there exists no constant \(C>0\) such that 
\begin{equation}´
\label{stat}
E^N_{\tau, \mathcal{T} \cap \Sigma_\tau}(u)  \leq P(\tau) C E^N_0(u)
\end{equation}
holds for all solutions \(u\) of the wave equation \eqref{waveeq} for \(0 \leq \tau < \tau^*\). 
\end{theorem}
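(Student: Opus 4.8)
The plan is to argue by contraposition, using the Gaussian beam solutions of Theorem \ref{symbiosis} as test functions in the putative decay estimate \eqref{stat}. Concretely, I would assume that \eqref{stat} does hold for some constant $C>0$ and all solutions $u$ of the wave equation \eqref{waveeq}, and from this extract a constant $\tilde C>0$ with $-g(N,\dot\gamma)\big|_{\I(\gamma)\cap\st}\leq P(\tau)\tilde C$ for all $0\leq\tau<\tau^*$, which directly contradicts the hypothesis.

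First I would fix an arbitrary $\tau_0\in[0,\tau^*)$ and a $\mu>0$, and choose some $T$ with $\tau_0<T<\tau^*$ and $\Sigma_T\cap\I(\gamma)\neq\emptyset$. Since $\gamma$ is contained in the open set $\mathcal{T}$, I can pick the neighbourhood $\N$ of $\gamma$ small enough that $\N\subseteq\mathcal{T}$ and, in addition, small enough for the final clause of Theorem \ref{symbiosis} to apply, so that the localisation estimate holds \emph{without} assuming \eqref{energyestimatecond2} (which is not available here). Theorem \ref{symbiosis} then supplies a solution $v$ with $E^N_0(v)=-g(N,\dot\gamma)\big|_{\gamma(0)}$ and $\Big| E^N_{\tau,\N\cap\st}(v) - \big[-g(N,\dot\gamma)\big|_{\I(\gamma)\cap\st}\big]\Big|<\mu$ for all $0\leq\tau\leq T$; evaluating at $\tau=\tau_0$ gives the lower bound $E^N_{\tau_0,\N\cap\Sigma_{\tau_0}}(v)>-g(N,\dot\gamma)\big|_{\I(\gamma)\cap\Sigma_{\tau_0}}-\mu$.

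Next I would exploit two monotonicities. Because $N$ and $\nt$ are both future directed timelike, the energy density $J^N(v)\cdot\nt$ is pointwise nonnegative (the dominant energy condition), so enlarging the domain of integration can only increase the energy; as $\N\cap\Sigma_{\tau_0}\subseteq\mathcal{T}\cap\Sigma_{\tau_0}$ this yields $E^N_{\tau_0,\mathcal{T}\cap\Sigma_{\tau_0}}(v)\geq E^N_{\tau_0,\N\cap\Sigma_{\tau_0}}(v)$. Applying the assumed estimate \eqref{stat} to $v$ then gives $E^N_{\tau_0,\mathcal{T}\cap\Sigma_{\tau_0}}(v)\leq P(\tau_0)\,C\,E^N_0(v)=P(\tau_0)\,C\,\big(-g(N,\dot\gamma)\big|_{\gamma(0)}\big)$. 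Chaining these with the lower bound above and letting $\mu\to0^+$ produces $-g(N,\dot\gamma)\big|_{\I(\gamma)\cap\Sigma_{\tau_0}}\leq P(\tau_0)\,\tilde C$ with $\tilde C:=C\cdot\big(-g(N,\dot\gamma)\big|_{\gamma(0)}\big)$. Since $\tau_0\in[0,\tau^*)$ was arbitrary and $\tilde C$ is independent of $\tau_0$, this is the sought contradiction.

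Each step is short, and the whole argument rests on Theorem \ref{symbiosis}; the point that needs genuine care is the \emph{uniformity of the constant} $\tilde C$. The auxiliary solution $v$ depends on $\tau_0$ (as well as on $\mu$, $T$, $\N$), so a priori nothing forbids a $\tau_0$-dependent bound --- but the normalisation in Theorem \ref{symbiosis} pins $E^N_0(v)$ to the single value $-g(N,\dot\gamma)\big|_{\gamma(0)}$, which is exactly what makes $\tilde C$ independent of $\tau_0$. This is the crux: only because the initial data is renormalised to a $\tau_0$-independent energy can the beams contradict a \emph{uniform} decay statement, in line with the discussion in Section \ref{energymethodGB}. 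I would also verify the harmless geometric fact that $\I(\gamma)\cap R_{[0,T]}$ is a compact subset of the open set $\mathcal{T}$ --- using that $t\circ\gamma$ is strictly increasing from $0$ --- so that a neighbourhood $\N\subseteq\mathcal{T}$ really does exist.
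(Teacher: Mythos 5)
Your proposal is correct and is essentially the paper's own argument: the paper likewise assumes \eqref{stat}, invokes Theorem \ref{symbiosis} with $\mu$ small and $\N\subseteq\mathcal{T}$ shrunk so that \eqref{energyinside} holds without \eqref{energyestimatecond2}, and uses the normalised initial energy $E^N_0(v)=-g(N,\dot{\gamma})\big|_{\gamma(0)}$ together with $E^N_{\tau,\N\cap\st}(v)\leq E^N_{\tau,\mathcal{T}\cap\st}(v)$ to reach a contradiction. The only difference is presentational — the paper picks a single $\tau_0$ where the bound fails, while you run the contrapositive uniformly in $\tau_0$ — and your emphasis on the $\tau_0$-independence of $\tilde C$ is exactly the point the paper's normalisation is designed to secure.
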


\begin{proof}
Assume the contrary, i.e., that there exists a constant \(C_0 >0\) such that \eqref{stat} holds. There is then a \(0 \leq \tau_0 < \tau^*\) with \( -g(N,\dot{\gamma})\big|_{\I(\gamma) \cap \Sigma_{\tau_0}} > -g(N,\dot{\gamma})\big|_{\I(\gamma) \cap \so} C_0 P(\tau_0) \). Choosing now \(\mu>0\) small enough and a neighbourhood \(\N \subseteq \mathcal{T}\) of \(\gamma\) small enough such that \eqref{energyinside} of Theorem \ref{symbiosis} applies without reference to \eqref{energyestimatecond2}, we obtain a contradiction.
\end{proof}

A very robust method for proving decay of solutions of the wave equation was given in \cite{DafRod09b} by Dafermos and Rodnianski (but also see \cite{MeTaTo12}). This method requires in particular an integrated local energy decay (ILED) statement (possibly with loss of derivative), i.e., a statement of the form \eqref{ILEDstat}. The next theorem gives a sufficient criterion for an ILED statement having to lose regularity.

\begin{theorem}
\label{ILED}
Let \((M,g)\) be a time oriented globally hyperbolic Lorentzian manifold with time function \(t\), foliated by the level sets \(\Sigma_\tau = \{t=\tau\}\), where \(\Sigma_0\) is a Cauchy hypersurface. Furthermore, let \(\mathcal{T}\) be an open subset of \(M\). Assume there is an affinely parametrised future directed null geodesic \(\gamma : [0, S) \to M\) with \(\gamma(0) \in \Sigma_0\), where \(0< S \leq \infty\), that is completely contained in \(\mathcal{T}\). Let \(N\) be a timelike, future directed vector field and set \[\tau^* := \sup\big{\{}\hat{\tau} \in [0,\infty) \,\big|\,\I(\gamma) \cap \st \neq \emptyset \textnormal{ for all } 0 \leq \tau < \hat{\tau}\big{\}}\;.\]

If
\begin{equation*}
\int_0^{\tau^*} -g(N,\dot{\gamma})\big|_{\I(\gamma) \cap \st} \;d\tau = \infty \;,
\end{equation*}
where \(\dot{\gamma}\) is with respect to some affine parametrization, then there exists no constant \(C>0\) such that
\begin{equation}
\label{ILEDstat}
\int_0^{\tau^*} \,\int_{\st \cap \mathcal{T}} J^N(u) \cdot \nt \; \vgt \; d\tau \leq C E^N_0(u)
\end{equation}
holds for all solutions \(u\) of the wave equation \eqref{waveeq}. 
\end{theorem}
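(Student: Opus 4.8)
The plan is to argue by contradiction, using the Gaussian beam solutions supplied by Theorem \ref{symbiosis} as an obstruction to \eqref{ILEDstat}. Suppose there were a constant $C>0$ for which \eqref{ILEDstat} holds for every solution $u$. The strategy is to feed into \eqref{ILEDstat} a family of Gaussian beam solutions along $\gamma$ whose right-hand side $E^N_0(u)$ is pinned to the fixed value $-g(N,\dot{\gamma})\big|_{\gamma(0)}$, while forcing the left-hand side to diverge as the truncation time tends to $\tau^*$.

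First I would fix a time $T<\tau^*$. By definition of $\tau^*$ we have $\I(\gamma)\cap\st\neq\emptyset$ for every $\tau<\tau^*$, so the hypothesis $\Sigma_T\cap\I(\gamma)\neq\emptyset$ needed to invoke Theorem \ref{symbiosis} is automatic. Since $\mathcal{T}$ is open and contains $\I(\gamma)$, I can take the neighbourhood $\N$ of $\gamma$ in Theorem \ref{symbiosis} to lie inside $\mathcal{T}$, and shrink it further so that the final clause of that theorem applies, i.e.\ so that \eqref{energyinside} holds without recourse to the auxiliary condition \eqref{energyestimatecond2}. For a parameter $\mu>0$ to be fixed later, Theorem \ref{symbiosis} then yields a solution $v$ with $E^N_0(v)=-g(N,\dot{\gamma})\big|_{\gamma(0)}$ and
\[
E^N_{\tau,\N\cap\st}(v) \;>\; -g(N,\dot{\gamma})\big|_{\I(\gamma)\cap\st}-\mu \qquad \forall\,0\le\tau\le T.
\]

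The key structural input is that the energy density $J^N(v)\cdot\nt=\T(v)(N,\nt)$ is pointwise non-negative, which follows from $N$ and $\nt$ both being future-directed timelike together with the dominant-energy property of the stress-energy tensor \eqref{seten}. This monotonicity lets me enlarge the integral in two independent ways: spatially, $E^N_{\tau,\mathcal{T}\cap\st}(v)\ge E^N_{\tau,\N\cap\st}(v)$ because $\N\subseteq\mathcal{T}$; and temporally, $\int_0^{\tau^*}E^N_{\tau,\mathcal{T}\cap\st}(v)\,d\tau\ge\int_0^{T}E^N_{\tau,\mathcal{T}\cap\st}(v)\,d\tau$. Chaining these with the integrated lower bound above and with the assumed estimate \eqref{ILEDstat} gives
\[
\int_0^T -g(N,\dot{\gamma})\big|_{\I(\gamma)\cap\st}\,d\tau \;-\;\mu T \;<\; \int_0^{T} E^N_{\tau,\mathcal{T}\cap\st}(v)\,d\tau \;\le\; C\,E^N_0(v) \;=\; C\big(-g(N,\dot{\gamma})\big|_{\gamma(0)}\big).
\]

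The one genuine subtlety, and the reason the order of quantifiers matters, is the accumulated spatial error $\mu T$: the slicewise bound in \eqref{energyinside} carries an error $\mu$ on each $\st$, and integrating over $[0,T]$ produces $\mu T$, which is not a priori small. The resolution is that $T$ is fixed \emph{before} $\mu$, so I simply choose $\mu\le 1/T$, bounding the error term by $1$. The right-hand side $C\big(-g(N,\dot{\gamma})\big|_{\gamma(0)}\big)+1$ is then a constant independent of $T$, whereas the main term $\int_0^T -g(N,\dot{\gamma})\big|_{\I(\gamma)\cap\st}\,d\tau$ tends to $\int_0^{\tau^*}(\cdots)\,d\tau=\infty$ as $T\uparrow\tau^*$ by hypothesis (the integrand is a smooth, hence locally integrable, function of $\tau$ since $g$, $N$ and the embedded curve $\gamma$ are smooth). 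Letting $T\to\tau^*$ therefore contradicts the displayed inequality, which completes the argument.
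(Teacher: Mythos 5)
Your proof is correct and follows essentially the same route as the paper's: the paper disposes of this theorem by remarking that it ``goes along the same lines as'' Theorem \ref{LED}, namely a contradiction argument that feeds the Gaussian beam solutions of Theorem \ref{symbiosis} (with the neighbourhood $\N \subseteq \mathcal{T}$ chosen so that \eqref{energyinside} holds without reference to \eqref{energyestimatecond2}) into the assumed estimate \eqref{ILEDstat}. Your explicit treatment of the accumulated error term $\mu T$ --- fixing $T < \tau^*$ first and then taking $\mu \leq 1/T$ --- together with the pointwise non-negativity of $J^N(v)\cdot \nt$ used for the spatial and temporal monotonicity, is exactly the bookkeeping the paper leaves implicit.
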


The proof of this theorem goes along the same lines as the one of Theorem \ref{LED}. The reader might have noticed that whether an ILED statement of the form \eqref{ILEDstat} exists or not depends heavily on the choice of the time function. On the other hand, it also depends heavily on the choice of the time function whether an ILED statement is helpful or not. So, for instance, we only have an estimate of the form 
\begin{equation*}
\int_{\mathcal{T} \cap R_{[0,\tau^*]}} J^N(u) \cdot \nt \;\mathrm{vol}_g \leq C \cdot \int_0^{\tau^*} \,\int_{\st \cap \mathcal{T}} J^N(u) \cdot \nt \; \vgt \; d\tau\;,
\end{equation*}
where \(C>0\), if the time function \(t\) is chosen such that \(\frac{1}{|dt(\nt)|} \leq C\) is satisfied for all \(0 \leq \tau \leq \tau^*\). Such an estimate, together with an ILED statement, is very convenient whenever one needs to control spacetime integrals that are quadratic in the first derivatives of the field.

\section{Part II: Applications to  black hole spacetimes}
\label{applications}

In the following we give a selection of applications of Theorems \ref{symbiosis}, \ref{LED} and \ref{ILED}. A rich variety of behaviours of the energy is provided by black hole spacetimes arising in general relativity\footnote{Another physically interesting application would be for example to the study of waves in time dependent inhomogeneous media.}. Although we will briefly introduce the Lorentzian manifolds that represent these black hole spacetimes, the reader completely unfamiliar with those is referred to \cite{HawkEllis} for a more detailed discussion, including the concept of a so called Penrose diagram and an introduction to general relativity. 

We first restrict our considerations to the \(2\)-parameter family of Reissner-Nordstr\"om black holes, which are exact solutions to the Einstein-Maxwell equations.  The spherical symmetry of these spacetimes (and the accompanying simplicity of the metric) allows for an easy presentation without hiding any crucial details.  In Section \ref{KerrSec} we then discuss the Kerr family and show that analogous results hold. 

\subsection{Applications to Schwarzschild and Reissner-Nordstr\"om black holes}
\label{RNSec}
The \(2\)-parameter family of Reissner-Nordstr\"om spacetimes is given by 
\begin{equation}
\label{RNfamily}
g= -(1 - \frac{2m}{r} + \frac{e^2}{r^2})\,dt^2 + \Big(1 - \frac{2m}{r} + \frac{e^2}{r^2}\Big)^{-1}\,dr^2 + r^2\,d\theta^2 + r^2 \sin^2\theta\,d\varphi^2 \;,
\end{equation}
initially defined on the manifold \(M:=\R \times (m+\sqrt{m^2 - e^2}, \infty) \times \mathbb{S}^2\), for which \((t,r,\theta,\varphi)\) are the standard coordinates. We restrict the real parameters \(m\) and \(e\), which model the mass and the charge of the black hole, respectively, to the range \(0 \leq e \leq m\), \(m\neq 0\). 

For \(e=0\) we obtain the \(1\)-parameter Schwarzschild subfamily which solves the vacuum Einstein equations. The manifold \(M\) and the metric \eqref{RNfamily} can be  analytically extended (such that they still solve the Einstein equations). The so called Penrose diagram of the maximal analytic extension of the Schwarzschild family is given below:

\begin{center}
\def\svgwidth{9cm}
%% Creator: Inkscape 0.48.3.1, www.inkscape.org
%% PDF/EPS/PS + LaTeX output extension by Johan Engelen, 2010
%% Accompanies image file '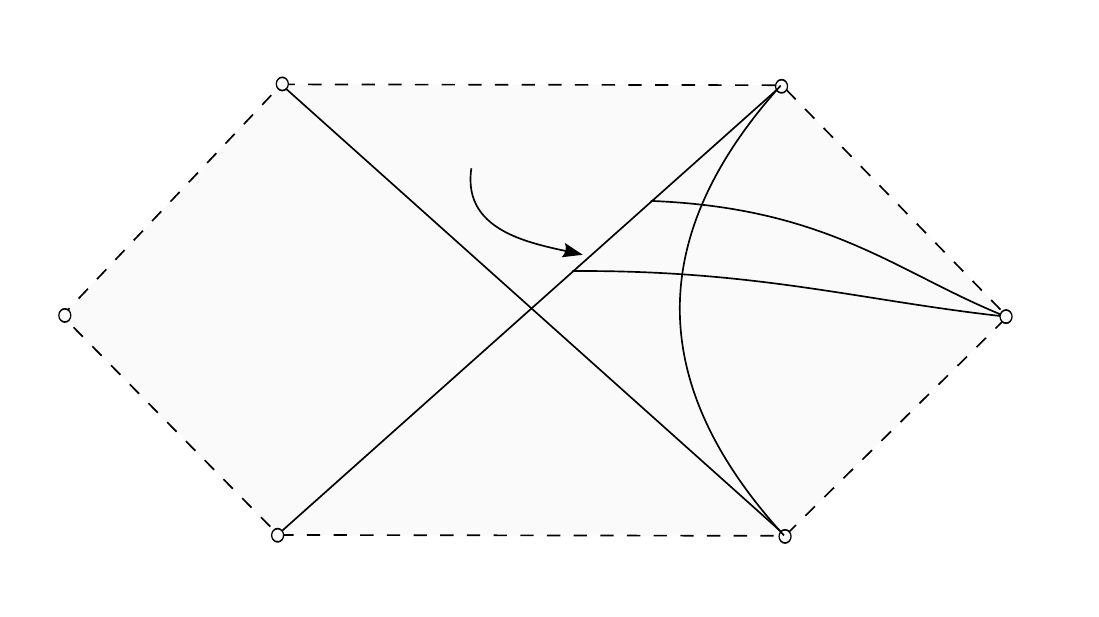' (pdf, eps, ps)
%%
%% To include the image in your LaTeX document, write
%%   \input{<filename>.pdf_tex}
%%  instead of
%%   \includegraphics{<filename>.pdf}
%% To scale the image, write
%%   \def\svgwidth{<desired width>}
%%   \input{<filename>.pdf_tex}
%%  instead of
%%   \includegraphics[width=<desired width>]{<filename>.pdf}
%%
%% Images with a different path to the parent latex file can
%% be accessed with the `import' package (which may need to be
%% installed) using
%%   \usepackage{import}
%% in the preamble, and then including the image with
%%   \import{<path to file>}{<filename>.pdf_tex}
%% Alternatively, one can specify
%%   \graphicspath{{<path to file>/}}
%% 
%% For more information, please see info/svg-inkscape on CTAN:
%%   http://tug.ctan.org/tex-archive/info/svg-inkscape
%%
\begingroup%
  \makeatletter%
  \providecommand\color[2][]{%
    \errmessage{(Inkscape) Color is used for the text in Inkscape, but the package 'color.sty' is not loaded}%
    \renewcommand\color[2][]{}%
  }%
  \providecommand\transparent[1]{%
    \errmessage{(Inkscape) Transparency is used (non-zero) for the text in Inkscape, but the package 'transparent.sty' is not loaded}%
    \renewcommand\transparent[1]{}%
  }%
  \providecommand\rotatebox[2]{#2}%
  \ifx\svgwidth\undefined%
    \setlength{\unitlength}{527.17768555bp}%
    \ifx\svgscale\undefined%
      \relax%
    \else%
      \setlength{\unitlength}{\unitlength * \real{\svgscale}}%
    \fi%
  \else%
    \setlength{\unitlength}{\svgwidth}%
  \fi%
  \global\let\svgwidth\undefined%
  \global\let\svgscale\undefined%
  \makeatother%
  \begin{picture}(1,0.56132311)%
    \put(0,0){\includegraphics[width=\unitlength]{Schwarzschild.pdf}}%
    \put(0.7617677,0.29962187){\color[rgb]{0,0,0}\makebox(0,0)[lb]{\smash{\(\Sigma_0\)}}}%
    \put(0.700022,0.37094869){\color[rgb]{0,0,0}\makebox(0,0)[lb]{\smash{\(\Sigma_\tau\)}}}%
    \put(0.54009288,0.34746962){\color[rgb]{0,0,0}\rotatebox{41.61468695}{\makebox(0,0)[lb]{\smash{\(r=2m\)}}}}%
    \put(0.45864826,0.49682872){\color[rgb]{0,0,0}\makebox(0,0)[lb]{\smash{\(r=0\)}}}%
    \put(0.71918447,0.49337544){\color[rgb]{0,0,0}\makebox(0,0)[lb]{\smash{\emph{i}\(^+\)}}}%
    \put(0.92934806,0.26661986){\color[rgb]{0,0,0}\makebox(0,0)[lb]{\smash{\emph{i}\(^0\)}}}%
    \put(0.6995293,0.03041353){\color[rgb]{0,0,0}\makebox(0,0)[lb]{\smash{\emph{i}\(^-\)}}}%
    \put(0.81616714,0.40043454){\color[rgb]{0,0,0}\rotatebox{-45.39591878}{\makebox(0,0)[lb]{\smash{\(\mathcal{I}^+\)}}}}%
    \put(0.83347146,0.12931763){\color[rgb]{0,0,0}\rotatebox{44.87806989}{\makebox(0,0)[lb]{\smash{\(\mathcal{I}^-\)}}}}%
    \put(0.46467206,0.03148841){\color[rgb]{0,0,0}\makebox(0,0)[lb]{\smash{\(r=0\)}}}%
    \put(0.23727864,0.50090521){\color[rgb]{0,0,0}\makebox(0,0)[lb]{\smash{\emph{i}\(^+\)}}}%
    \put(0.01222104,0.26963178){\color[rgb]{0,0,0}\makebox(0,0)[lb]{\smash{\emph{i}\(^0\)}}}%
    \put(0.23418897,0.03041358){\color[rgb]{0,0,0}\makebox(0,0)[lb]{\smash{\emph{i}\(^-\)}}}%
    \put(0.30425243,0.38556626){\color[rgb]{0,0,0}\rotatebox{-41.572799}{\makebox(0,0)[lb]{\smash{\(r=2m\)}}}}%
    \put(0.10715079,0.16835104){\color[rgb]{0,0,0}\rotatebox{-45.47781624}{\makebox(0,0)[lb]{\smash{\(\mathcal{I}^-\)}}}}%
    \put(0.13669219,0.37545293){\color[rgb]{0,0,0}\rotatebox{46.4073224}{\makebox(0,0)[lb]{\smash{\(\mathcal{I}^+\)}}}}%
    \put(0.63701929,0.2379531){\color[rgb]{0,0,0}\makebox(0,0)[lb]{\smash{\(r=3m\)}}}%
    \put(0.41564379,0.42016166){\color[rgb]{0,0,0}\makebox(0,0)[lb]{\smash{\(\mathcal{H}^+\)}}}%
  \end{picture}%
\endgroup%
 \label{Pen}
\end{center}
The diamond shaped region to the right corresponds to the Lorentzian manifold \((M,g)\) we started with; it represents the exterior of the black hole. The triangle to the top corresponds to the interior of the black hole, which is separated from the exterior by the so called event horizon, the line from the centre to the top-right \emph{i}\(^+\). The remaining parts of the Penrose diagram play no role in the following discussion. 

The black hole stability problem (see the introduction of \cite{DafRod08}) motivates the study of the wave equation in the exterior of the black hole (the event horizon included). In accordance with our discussion in Section \ref{energymethodGB}, we consider the framework of the energy method for the study of the wave equation.
A suitable notion of energy for the black hole exterior is obtained via \eqref{N-energy} through the foliation given by \(\st = \{t^* = \tau\}\) for \(t^* \geq c > -\infty\), where  \(t^* = t + 2m\log(r-2m)\), together with the timelike vector field \(N := -(dt^*)^\sharp\).\footnote{ \label{suitable} We are intentionally quite vague about what we mean by `suitable notion of energy'. Instead of considering a foliation that ends at spacelike infinity \(\iota^0\), it is sometimes desirable to work with a foliation that ends at future null infinity \(\mathcal{I}^+\). In a stationary spacetime, however, it is always convenient (and indeed `suitable'...) to work with a foliation and an energy measuring vector field \(N\) both of which are invariant under the flow of the Killing vector field. The obvious advantage is that the constants in Sobolev embeddings do not depend on the leaf - of course provided that higher energy norms are also defined accordingly. The precise choice of the timelike vector field \(N\) in a \emph{compact} region of one leaf is completely irrelevant, since all the energy norms are equivalent in a compact region. In particular one can deduce that the following result about trapping at the photonsphere in Schwarzschild remains unchanged if we choose a different timelike vector field \(N\) which commutes with \(\partial_t\) and a different foliation by spacelike slices. In fact note that the behaviour of the energy of the null geodesic, \(-g(N,\dot{\gamma})\), does not depend at all on the choice of the foliation!}

\subsubsection{Trapping at the photon sphere}
\label{photonsphere}
There are null geodesics in the Schwarzschild spacetime that stay forever on the photon sphere at \(\{r=3m\}\). Indeed, one can check that the curve \(\gamma\), given by
\begin{equation*}
\gamma(s) = ( s , 3m , \frac{\pi}{2} , (27m^2)^{-\frac{1}{2}}s )
\end{equation*}
in \((t,r,\theta, \varphi)\) coordinates is an affinely parametrised null geodesic, whose  \(N\)-energy is given by  \(-g(N,\dot{\gamma}) = 1\). We now apply Theorem \ref{LED}: The time oriented\footnote{The time orientation is given by the timelike vector field \(N\).} globally hyperbolic Lorentzian manifold can be taken to be the domain of dependence \(\mathcal{D}(\so)\) of \(\so\) in \((M,g)\). Moreover, we choose the time function to be given by the restriction of \(t^*\) to \(\mathcal{D}(\so)\), and the vector field \(N\) and null geodesic \(\gamma(s)\) in Theorem \ref{LED} are given by \(N\) and \(\gamma\big(s - 2m\log(m)\big)\) from above. Since \(-g(N,\dot{\gamma}) = 1\) holds, Theorem \ref{LED} now states that given any open neighbourhood \(\mathcal{T}\) of \(\I(\gamma)\) in \(\mathcal{D}(\so)\), there is  no function \(P : [0,\infty) \to (0,\infty)\) with \(P(\tau) \to 0\) for \(\tau \to \infty\) such that 
\begin{equation*}
E^N_{\tau, \mathcal{T} \cap \st}(u) \leq P(\tau) E^N_0(u)
\end{equation*}
holds for all solutions \(u\) of the wave equation for all \(\tau \geq 0\). It follows, that an LED statement for such a region can only hold if it loses differentiability. One can infer the analogous result about ILED statements from Theorem \ref{ILED}.

Let us mention here that \(\gamma\) has conjugate points. Indeed, the Jacobi field \(J\) with initial data \(J(0) =0\) and \(D_sJ(0) = \partial_\theta|_{\gamma(0)}\) vanishes in finite affine time \(s>0\): First note that the vector field
\begin{equation*}
s \mapsto \partial_{\theta}|_{\gamma(s)}
\end{equation*}
along \(\gamma\) is parallel, i.e., \(D_s \partial_{\theta}\big|_{\gamma(s)} =0\). Moreover, a direct computation yields
\begin{equation*}
R(\partial_\theta, \dot{\gamma})\, \dot{\gamma}\,\big|_{\gamma(s)} = \frac{1}{27m^2}\partial_{\theta}\big|_{\gamma(s)}\,,
\end{equation*}
where \(R(\cdot, \cdot) \,\cdot\) is the Riemann curvature endomorphism. Thus, it follows that the vector field
\begin{equation*}
J(s) = (27m^2)^{\frac{1}{2}} \sin \big( (27m^2)^{-\frac{1}{2}} s\big) \cdot \partial_{\theta}\big|_{\gamma(s)}
\end{equation*}
satisfies the Jacobi equation \(D_t^2J + R(J,\dot{\gamma}) \,\dot{\gamma} =0\). Moreover, it clearly satisfies the above initial conditions and vanishes in finite affine time.

It now follows from Theorem \ref{breakdown} that one cannot construct localised solutions to the wave equation along the trapped null geodesic \(\gamma\) using the naive geometric optics approximation alone. Indeed, one would need to bridge these caustics using Maslov's canonical operator.

That one can indeed prove an (I)LED statement with a loss of derivative was shown in \cite{DafRod09a} (see also \cite{BlueSter06}). In fact, it is sufficient to lose only an \(\varepsilon\) of a derivative, see \cite{BlueSoff08} and also \cite{DafRod08}. For a numerical study of the behaviour of a wave trapped at the photon sphere we refer the interested reader to \cite{ZenGall12}.

Other, similar, examples are trapping at the photon sphere in higher dimensional Schwarzschild \cite{Schl10} or in Reissner-Nordstr\"om \cite{Are11a} and \cite{BlueSoff08}.

\subsubsection{The red-shift effect at the event horizon - and its relevance for scattering constructions from the future}
\label{red2}
Another kind of behaviour of the energy is exhibited by the trapping occuring at the event horizon of the Schwarzschild spacetime. Recall that the event horizon \(\mathcal{H}^+\) at \(\{r=2m\}\) is a null hypersurface, spanned by null geodesics. In \((t^*,r,\theta,\varphi)\) coordinates the affinely parametrised generators are given by
\begin{equation*}
\gamma(s) = ( \frac{1}{\kappa} \mathrm{log}(s) , 2m , \theta_0 , \varphi_0  ) \;,
\end{equation*} 
where \(\kappa = \frac{1}{4m}\) is the surface gravity, \(s \in (0,\infty)\) and \(\theta_0\), \(\varphi_0\) are constants.
Thus, we have
\begin{equation}
\label{energyhorizon}
-\big(\dot{\gamma}(s),N\big) = \frac{1}{\kappa s} = \frac{1}{\kappa} e^{-\kappa t^*} \;,
\end{equation}
i.e., the energy of the corresponding Gaussian beam decays exponentially - a direct manifestation of the celebrated red-shift effect. For more on the impact of the red-shift effect on the study of the wave equation on Schwarzschild we refer the reader to the original paper \cite{DafRod09a} by Dafermos and Rodnianski, but also see their \cite{DafRod08}. 

Let us emphasise again that the null geodesics at the photon sphere as well as those at the horizon are trapped in the sense that they never escape to null infinity - but only those at the photon sphere form an obstruction for an LED statement without loss of differentiability to hold; the `trapped' energy at the horizon decays exponentially. This is in stark contrast to the obstacle problem where every trapped light ray automatically leads to an obstruction for an LED statement without loss of derivatives to hold (see \cite{Ral69}). This new variety of how the `trapped' energy behaves is due to the lack of a global timelike Killing vector field. 

Let us now investigate the role played by the red-shift effect in scattering  constructions from the future. While the red-shift effect is conducive to proving bounds on  solutions to the wave equation in the `forward problem', it turns into a blue-shift in the `backwards problem'\footnote{We call the initial value problem on \(\so\) to the future the `forward problem', while solving a mixed characteristic initial value problem on \(\mathcal{H}^+(\tau) \cup \st\) to the past (or indeed a scattering construction from the future with data on \(\mathcal{H}^+\) and \(\mathcal{I}^+\)) is called the `backwards problem'. Here, we have denoted the (closed) portion of the event horizon \(\mathcal{H}^+\) that is cut out by \(\so\) and \(\st\) by \(\mathcal{H}^+(\tau)\).}; it amplifies energy near the horizon.

\begin{center}
\def\svgwidth{5.5cm}
%% Creator: Inkscape 0.48.3.1, www.inkscape.org
%% PDF/EPS/PS + LaTeX output extension by Johan Engelen, 2010
%% Accompanies image file '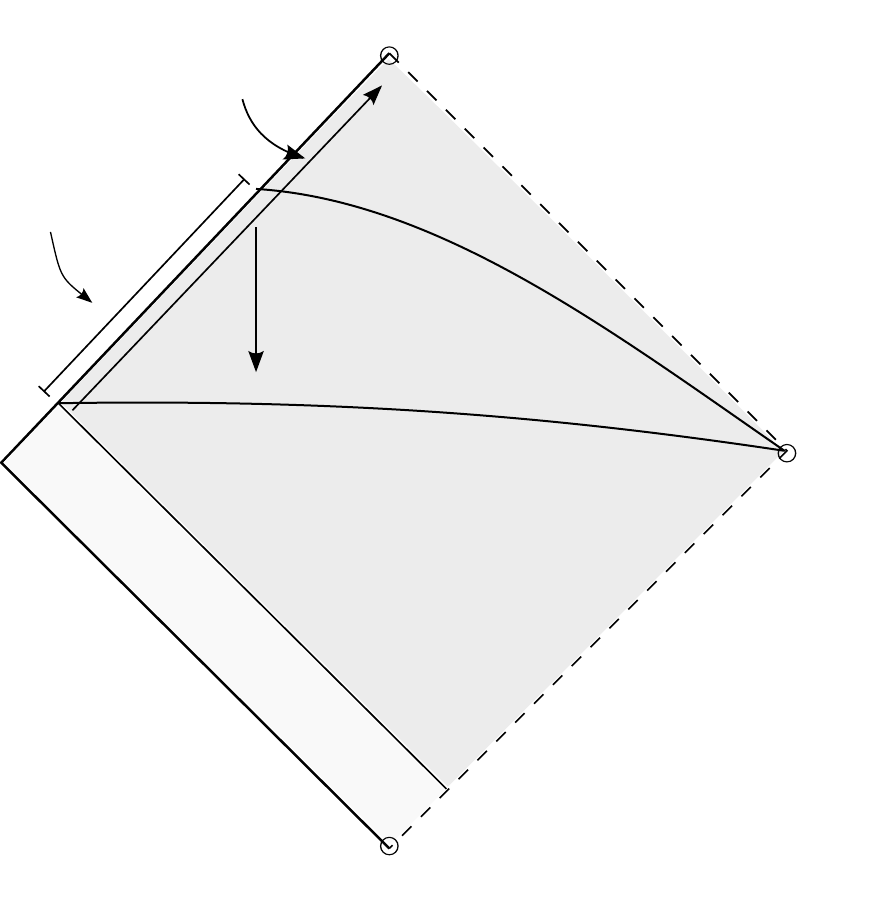' (pdf, eps, ps)
%%
%% To include the image in your LaTeX document, write
%%   \input{<filename>.pdf_tex}
%%  instead of
%%   \includegraphics{<filename>.pdf}
%% To scale the image, write
%%   \def\svgwidth{<desired width>}
%%   \input{<filename>.pdf_tex}
%%  instead of
%%   \includegraphics[width=<desired width>]{<filename>.pdf}
%%
%% Images with a different path to the parent latex file can
%% be accessed with the `import' package (which may need to be
%% installed) using
%%   \usepackage{import}
%% in the preamble, and then including the image with
%%   \import{<path to file>}{<filename>.pdf_tex}
%% Alternatively, one can specify
%%   \graphicspath{{<path to file>/}}
%% 
%% For more information, please see info/svg-inkscape on CTAN:
%%   http://tug.ctan.org/tex-archive/info/svg-inkscape
%%
\begingroup%
  \makeatletter%
  \providecommand\color[2][]{%
    \errmessage{(Inkscape) Color is used for the text in Inkscape, but the package 'color.sty' is not loaded}%
    \renewcommand\color[2][]{}%
  }%
  \providecommand\transparent[1]{%
    \errmessage{(Inkscape) Transparency is used (non-zero) for the text in Inkscape, but the package 'transparent.sty' is not loaded}%
    \renewcommand\transparent[1]{}%
  }%
  \providecommand\rotatebox[2]{#2}%
  \ifx\svgwidth\undefined%
    \setlength{\unitlength}{423.96782227bp}%
    \ifx\svgscale\undefined%
      \relax%
    \else%
      \setlength{\unitlength}{\unitlength * \real{\svgscale}}%
    \fi%
  \else%
    \setlength{\unitlength}{\svgwidth}%
  \fi%
  \global\let\svgwidth\undefined%
  \global\let\svgscale\undefined%
  \makeatother%
  \begin{picture}(1,1.02878325)%
    \put(0,0){\includegraphics[width=\unitlength]{backward.pdf}}%
    \put(0.90983989,0.49681222){\color[rgb]{0,0,0}\makebox(0,0)[lb]{\smash{\emph{i}\(^0\)}}}%
    \put(0.45697539,0.97932859){\color[rgb]{0,0,0}\makebox(0,0)[lb]{\smash{\emph{i}\(^+\)}}}%
    \put(0.41114977,0.0142958){\color[rgb]{0,0,0}\makebox(0,0)[lb]{\smash{\emph{i}\(^-\)}}}%
    \put(0.02555958,0.77820772){\color[rgb]{0,0,0}\makebox(0,0)[lb]{\smash{\(\mathcal{H}^+(\tau)\)}}}%
    \put(0.44888854,0.79602627){\color[rgb]{0,0,0}\makebox(0,0)[lb]{\smash{\(\Sigma_\tau\)}}}%
    \put(0.57827844,0.49950776){\color[rgb]{0,0,0}\makebox(0,0)[lb]{\smash{\(\Sigma_0\)}}}%
    \put(0.41654108,0.34046606){\color[rgb]{0,0,0}\makebox(0,0)[lb]{\smash{\(\mathcal{D}(\Sigma_0)\)}}}%
    \put(0.31903618,0.70148791){\color[rgb]{0,0,0}\makebox(0,0)[lb]{\smash{solve}}}%
    \put(0.31831758,0.63663497){\color[rgb]{0,0,0}\makebox(0,0)[lb]{\smash{backwards}}}%
    \put(0.24779207,0.93398552){\color[rgb]{0,0,0}\makebox(0,0)[lb]{\smash{\(\gamma_\tau\)}}}%
  \end{picture}%
\endgroup%

\end{center}

\begin{proposition}
\label{backsharp}
For every \(\mu >0\) and for every \(\tau >0\) there exists a smooth solution\footnote{We denote with \(\overline{\mathcal{D}(\so)}\) the closure of \(\mathcal{D}(\so)\) in the maximal analytic extension of Schwarzschild, see the Penrose diagram on page \pageref{Pen}.} \(v \in C^\infty(\overline{\mathcal{D}(\so)}, \C)\) to the wave equation \eqref{waveeq} with \(E^N_{\tau}(v) = 1\) and \(\int_{\mathcal{H}^+(\tau)} J^N(v) \, \lrcorner \, \vg < \mu\), which satisfies \(E^N_0(v) \geq e^{\kappa \tau} - \mu \), where \(\kappa = \frac{1}{4m}\) is the surface gravity of the Schwarzschild black hole. 
\end{proposition}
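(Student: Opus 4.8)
The plan is to build $v$ as a rescaled Gaussian beam associated with the event horizon generator and to read off the exponential growth from the red-shift factor \eqref{energyhorizon}. Let $\gamma$ be the affinely parametrised generator of $\mathcal{H}^+$ from the statement, reparametrised so that $\gamma(0)\in\so$; by \eqref{energyhorizon} its $N$-energy is $-g(N,\dot\gamma)\big|_{\I(\gamma)\cap\st}=\tfrac1\kappa e^{-\kappa\tau}$, which \emph{decreases} as $\tau$ grows, and in particular the ratio of its values on $\so$ and on $\st$ is $e^{\kappa\tau}$. I would apply Theorem \ref{symbiosis} to the future-directed null geodesic $\gamma$ on a globally hyperbolic region obtained by extending $\so$ slightly across the horizon into the interior, so that $\gamma$ lies in the interior and $\so$ stays Cauchy (near the horizon the region $\Rot\cap J^+(\N\cap\so)$ is relatively compact, so \eqref{energyestimatecond2} holds by Remark \ref{rem}). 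Taking $T=\tau$ in Theorem \ref{symbiosis}, this produces, for any $\N$ and any $\mu'>0$, an actual solution $v_0$ of \eqref{waveeq} with $E^N_0(v_0)=\tfrac1\kappa$ and $\big|E^N_\tau(v_0)-\tfrac1\kappa e^{-\kappa\tau}\big|<\mu'$, together with the localisation \eqref{energyinside}--\eqref{energyoutside}. Restricting to $\overline{\D(\so)}$ and passing to the real part (cf.\ the remark after Theorem \ref{symbiosis} and remark iii) after Theorem \ref{main}) we may take $v_0$ real valued.

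The two energy statements then follow by a trivial rescaling. Since \eqref{waveeq} is linear, $v:=c\,v_0$ solves the wave equation for any $c\in\R$; choosing $c^2:=1/E^N_\tau(v_0)$ gives $E^N_\tau(v)=1$ exactly, and then
\[ E^N_0(v)=c^2\,E^N_0(v_0)=\frac{1}{\kappa\,E^N_\tau(v_0)}\ \ge\ \frac{1}{\kappa\big(\tfrac1\kappa e^{-\kappa\tau}+\mu'\big)}=\frac{e^{\kappa\tau}}{1+\kappa e^{\kappa\tau}\mu'}\ \ge\ e^{\kappa\tau}-\mu \]
provided $\mu'$ is chosen small enough (depending on $\mu,\kappa,\tau$). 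This already yields $E^N_\tau(v)=1$ and $E^N_0(v)\ge e^{\kappa\tau}-\mu$.

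The main obstacle is the horizon flux bound $\int_{\mathcal{H}^+(\tau)}J^N(v)\,\lrcorner\,\vg<\mu$; after the \emph{fixed} rescaling by $c^2=\kappa e^{\kappa\tau}$ this is admissible only because the unscaled horizon flux tends to $0$ as $\lambda\to\infty$. I would establish this in two steps. First, for the underlying Gaussian beam $\tilde u_\lambda$ the leading ($\lambda^2$) part of the flux density through $\mathcal{H}^+$ is the contraction of the stress-energy tensor with $N$ and with the null generator $L\propto\dot\gamma$; since $d\phi\big|_\gamma=\dot\gamma^\flat$ (by \eqref{firstd}, \eqref{real}) is null and tangent to the horizon, this density vanishes \emph{on} $\gamma$. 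A short computation shows it vanishes there to \emph{first order} in the two horizon-transverse directions: the $d\phi\cdot\overline{d\phi}$ piece vanishes to second order (using that $d\phi\cdot d\phi$ does, \eqref{gaussfirstcond}, and that $d\phi_2$ vanishes to first order), the $\phi_2$-contributions to $\Real\big((N\phi)\overline{(L\phi)}\big)$ vanish to second order, and the remaining $N\phi_1\cdot L\phi_1$ term is first order because $g(L,L)=0$ on $\{r=2m\}$ and the transverse-within-horizon directions are angular while $\dot{(d\phi)}=\tfrac{d}{ds}\dot\gamma^\flat\propto dr$. By the two-dimensional analogue of (the reformulation of) Lemma \ref{decaythroughGauss}, the horizon flux of $\tilde u_\lambda$ is therefore $\bigO(\lambda^{-1/2})$ after normalising its $N$-energy, hence tends to $0$. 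Second, for the actual solution I would set $w:=v_0-\tilde u_\lambda$, which has vanishing Cauchy data on $\so$ and satisfies $\Box w=-\Box\tilde u_\lambda$ with $\|\Box\tilde u_\lambda\|_{L^2(\Rot)}\to0$; in the energy estimate on the region bounded by $\so$, $\st$ and $\mathcal{H}^+(\tau)$ the flux of $J^N$ through the null hypersurface $\mathcal{H}^+(\tau)$ enters with a favourable sign (it is non-negative, as $N$ and $L$ are future causal), so it is controlled by $E^N_0(w)+\|\Box w\|_{L^2}^2\to0$. Since the horizon flux is a non-negative quadratic form, the triangle inequality for its square root gives $\int_{\mathcal{H}^+(\tau)}J^N(v_0)\,\lrcorner\,\vg\to0$, whence $\int_{\mathcal{H}^+(\tau)}J^N(v)\,\lrcorner\,\vg=c^2\int_{\mathcal{H}^+(\tau)}J^N(v_0)\,\lrcorner\,\vg\to0$.

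Collecting the three requirements, I would first fix $\mu'>0$ small enough for the displayed energy bound, shrink $\N$ so that \eqref{energyinside}--\eqref{energyoutside} hold and the relevant region is relatively compact, and finally choose $\lambda$ large enough that the horizon flux of $v$ falls below $\mu$; this produces the desired $v$. I expect the genuinely delicate point to be exactly the \emph{first-order} (rather than merely pointwise) vanishing of the leading flux density along the generator, which is what makes the horizon flux negligible relative to the energy and thus survives the exponentially large rescaling by $c^2$. The interpretation as a \emph{backwards} construction is then immediate: the same $v$, viewed from $\st$ towards the past, carries unit energy and essentially no flux across $\mathcal{H}^+(\tau)$, yet acquires energy $\sim e^{\kappa\tau}$ on $\so$ --- the red-shift of \eqref{energyhorizon} turned into a blue-shift.
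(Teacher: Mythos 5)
Your route is genuinely different from the paper's, and it is worth saying what each buys. The paper never puts the beam on the horizon: using \eqref{energyhorizon} and the smooth dependence of geodesics on initial data, it picks, for each \(\tau\), a radially \emph{outgoing} null geodesic \(\gamma_\tau\) lying entirely in \(\mathcal{D}(\so)\), affinely normalised so that \(-g(N,\dot\gamma_\tau)=1\) on \(\st\) and \(\approx e^{\kappa\tau}\) on \(\so\), and applies Theorem \ref{symbiosis} with \(\N\subseteq\mathcal{D}(\so)\) \emph{disjoint from} \(\mathcal{H}^+\). Then the exponential energy statement is read off directly (all energies are exterior energies), and the flux bound is immediate from the refined estimate \eqref{ee}/\eqref{refined}: since \(\supp(\tilde u)\cap\mathcal{H}^+=\emptyset\), one has \(J^N(v)=J^N(v-\tilde u)\) on \(\mathcal{H}^+(\tau)\), and the latter flux is controlled by \(\|\Box\tilde u\|_{L^2}^2\). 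Your construction, by contrast, centres the beam on the generator in an extended spacetime, so the horizon flux is not trivially small and you must show that the leading (\(\lambda^2\)) coefficient of \(\T(N,L)\) vanishes to first order transversally within \(\mathcal{H}^+\). That computation is in fact correct: \(L\phi_1\) vanishes on \(\gamma\) because \(\partial_{t^*}\) is null at \(r=2m\), and its angular derivatives vanish on \(\gamma\) because, by the compatibility relation \eqref{compatible} together with \eqref{firstd}, \(\Real(M_{A t^*})\propto\dot{(d\phi)}_A\) and \(\dot{(d\phi)}\propto dr\) along the generator; combined with \eqref{gaussfirstcond} this gives the quadratic transverse vanishing you need, hence a normalised flux of \(\bigO(\lambda^{-1/2})\). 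So the delicate point you flagged goes through --- but note the paper's choice of geodesics makes this entire computation unnecessary.

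There is, however, a genuine gap in the energy accounting. Theorem \ref{symbiosis}, applied on the spacetime obtained by pushing \(\so\) across the horizon, controls \(E^N_0(v_0)\) and \(E^N_\tau(v_0)\) as integrals over the \emph{extended} slices, whereas Proposition \ref{backsharp} asserts \(E^N_\tau(v)=1\) and \(E^N_0(v)\geq e^{\kappa\tau}-\mu\) for the slices of \(\overline{\mathcal{D}(\so)}\), i.e.\ the exterior portions only. Your beam is centred \emph{on} \(\mathcal{H}^+\), so as \(\lambda\to\infty\) a non-negligible fraction (in the limit, exactly one half, since the rescaled Gaussian is symmetric about the horizon hyperplane through its centre) of each slice energy lies \emph{inside} the black hole. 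Consequently, normalising by the extended-slice energy as you do gives exterior energy \(\approx\tfrac12\) at time \(\tau\), not \(1\); and if one instead normalises by the exterior energy at time \(\tau\), the lower bound \(E^N_0(v)\geq e^{\kappa\tau}-\mu\) requires knowing that the ratio of the exterior fractions at times \(0\) and \(\tau\) tends to \(1\) as \(\lambda\to\infty\) --- a statement Theorem \ref{symbiosis} cannot supply, since it sees only total and \(\N\)-localised energies (and \(\N\) also straddles the horizon). This is fixable: the leading-order energy density on each slice concentrates, after the rescaling \(y=\sqrt\lambda\,x_\perp\), to a Gaussian centred on the bounding hyperplane \(\{r=2m\}\), so both fractions converge to \(\tfrac12\) and the ratio tends to \(1\); but this splitting argument is an additional step your proposal does not contain, and without it the chain of inequalities proves a statement about the wrong energies. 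A second, minor point: relative compactness of \(\Rot\cap J^+(\N\cap\so)\) \emph{inside} the extended globally hyperbolic manifold can fail near the Cauchy horizon of the extended initial surface, so \eqref{energyestimatecond2} should be checked directly (it does hold, since all relevant quantities extend smoothly to the compact closure in the maximal extension) rather than cited via Remark \ref{rem}.
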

Here, \(J^N(v) \, \lrcorner \, \vg\) denotes the \(3\)-form obtained by inserting the vector field \(J^N(v)\) into the first slot of \(\vg\). Let us also remark that  \(\mu\) should be thought of as a small positive number, while \(\tau\) rather as a big one.

\begin{proof}
As in Section \ref{photonsphere} we consider the Lorentzian manifold \(\mathcal{D}(\so)\) with time function \(t^*\) and timelike vector field \(N\). Since geodesics depend smoothly on their initial data, it follows from \eqref{energyhorizon} that we can find for every \(\tau >0\) an affinely parametrised radially outgoing null geodesic\footnote{Radially outgoing null geodesics are the lines parallel to, and to the right of, \(\mathcal{H}^+\) in the Penrose diagram. In \((u,r,\theta,\varphi)\) coordinates, where \(u(t,r,\theta, \varphi) := t- 2m\log(r-2m) - r\), these null geodesics are tangent to \(\frac{\partial}{\partial r}\).}  \(\gamma_\tau\) in \(\mathcal{D}(\so)\) with \(\big|-\big(\dot{\gamma}_\tau,N\big)|_{\I(\gamma_\tau) \cap \so} - e^{\kappa \tau}\big| < \frac{\mu}{2}\) and \(-\big(\dot{\gamma }_\tau,N\big)|_{\I(\gamma_\tau) \cap \st} =1 \). 
We note that for our choice of time function and vector field \(N\) the condition \eqref{energyestimatecond} is satisfied, which does not only give us the energy estimate \eqref{energyestimate}, but here also the refined version 
\begin{equation}
\label{ee}
\int_{\mathcal{H}^+(\tau)} J^N(u)\, \lrcorner\, \vg + E^N_{\tau}(u) \leq C(\tau)\big( E^N_0(u) + ||\Box u||^2_{L^2(R_{[0,T]})}\big) \;,
\end{equation}
which holds in \(\overline{\mathcal{D}(\so)}\) for all \(\tau >0\) and for all \(u \in C^\infty(\overline{\mathcal{D}(\so)}, \R)\). The estimate \eqref{ee} is derived in the same way as \eqref{energyestimate}, namely by an application of Stokes' theorem to \(J^N(u) \,\lrcorner\, \vg\), followed by Gronwall's inequality. The estimate \eqref{ee} gives in addition to \eqref{thmlocapp} in Theorem \ref{localised} the estimate
\begin{equation}
\label{refined}
\int_{\mathcal{H}^+(\tau)} J^N(v - \tilde{u}) \,\lrcorner\, \vg < \mu\;,
\end{equation}
where \(\tilde{u}\) is the Gaussian beam and \(v\) is the actual solution, as in Theorem \ref{localised}. We now apply Theorem \ref{symbiosis}, where the Lorentzian manifold is given by \(\mathcal{D}(\so)\), the time function by \(t^*\), the timelike vector field by \(N\) and for given \(\tau >0\), the affinely parametrised null geodesic is taken to be \(\gamma_\tau\) from above. For our purposes we can choose any neighbourhood \(\N\) of \(\I(\gamma_\tau)\) \emph{in} \(\mathcal{D}(\so)\). Theorem \ref{symbiosis} then ensures the existence of a solution \(v \in C^\infty(\mathcal{D}(\so), \C)\) to the wave equation with \(E^N_0(v) \geq e^{\kappa \tau} - \mu \) and \(E^N_{\tau}(v) = 1\) -- possibly after renormalising the energy at time \(\tau\) of \(v\) to be exactly \(1\). It is not difficult to show, for example by considering the Cauchy problem for a slightly larger globally hyperbolic Lorentzian manifold which contains the event horizon, that \(v\) can be chosen to extend smoothly to the event horizon. We then obtain \(\int_{\mathcal{H}^+(\tau)} J^N(v) \, \lrcorner \, \vg < \mu\) from \eqref{refined}, since recall that the Gaussian beam \(\tilde{u}\) in Theorem \ref{localised} is supported in \(\N\), which is disjoint from \(\mathcal{H}^+\). This finishes the proof.
\end{proof}

The above proposition shows that for every \(\tau >0\) one can prescribe initial data for the mixed characteristic initial value problem on \(\mathcal{H}^+ \cup \st\) such that the total initial energy is equal to one, while the energy of the  solution obtained by solving backwards grows exponentially to  \(\approx e^{\kappa \tau}\) on \(\so\). In \cite{DafHolRod13}, Dafermos, Holzegel and Rodnianski approach the scattering problem from the future for the Einstein equations (with initial data prescribed on \(\mathcal{H}^+\) and \(\mathcal{I}^+\)) by considering it as the limit of finite backwards problems, which - for the wave equation - are qualitatively the same as the backwards problem with initial data on \(H^+(\tau)\) and \(\st\). In order to take the limit of the finite problems, uniform control over the solutions is required: Dafermos et al.\ use a backwards energy estimate which bounds the energy on \(\so\) by the initial energy on \(\mathcal{H}^+\) and \(\st\), multiplied by \(C \cdot \exp(c \tau)\), where \(c\) and \(C\) are constants that are independent of \(\tau\). Proposition \ref{backsharp} shows now that this estimate is sharp in the sense that one cannot avoid exponential growth (at least not as long as one does not sacrifice regularity in the estimate). In particular, working with this estimate enforces the assumption of exponential decay on the scattering data in \cite{DafHolRod13}.

\subsubsection{The blue-shift near the Cauchy horizon of a sub-extremal Reissner-Nordstr\"om black hole}
\label{sub-extremal}
We now move on to the sub-extremal Reissner-Nordstr\"om black hole, i.e., to the parameter range \(0<e < m\) in \eqref{RNfamily}. More precisely, we consider again its maximal analytic extension. Part of the Penrose diagram is given below: 

\begin{center}
\def\svgwidth{8cm}
%% Creator: Inkscape 0.48.3.1, www.inkscape.org
%% PDF/EPS/PS + LaTeX output extension by Johan Engelen, 2010
%% Accompanies image file '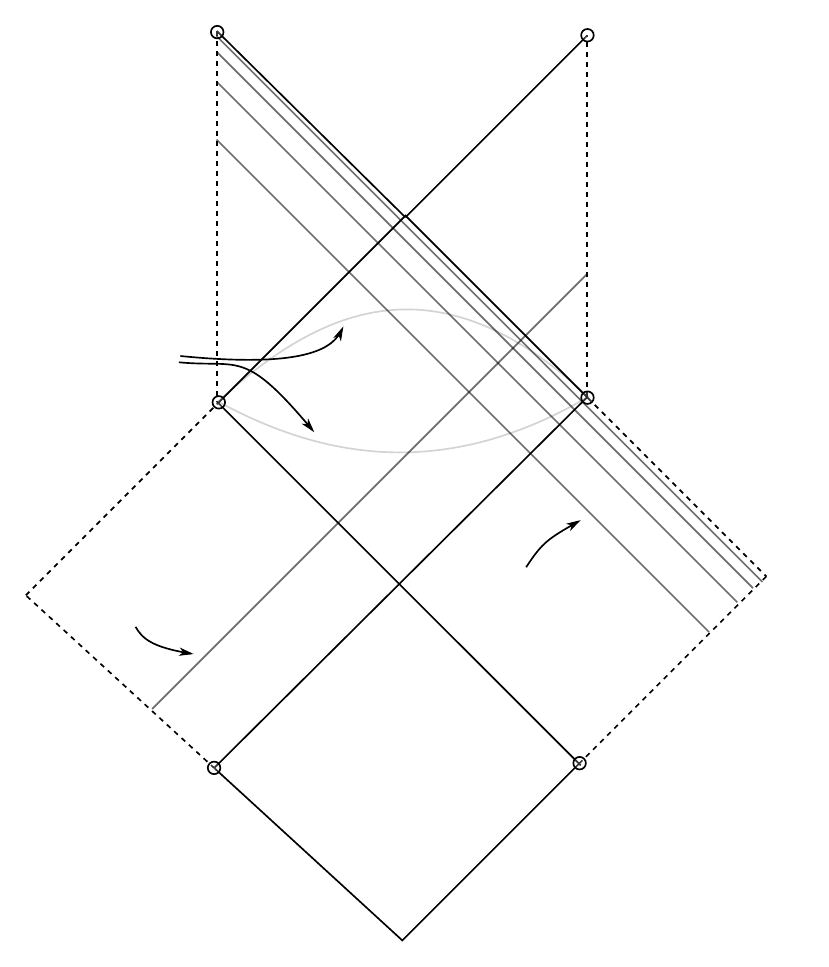' (pdf, eps, ps)
%%
%% To include the image in your LaTeX document, write
%%   \input{<filename>.pdf_tex}
%%  instead of
%%   \includegraphics{<filename>.pdf}
%% To scale the image, write
%%   \def\svgwidth{<desired width>}
%%   \input{<filename>.pdf_tex}
%%  instead of
%%   \includegraphics[width=<desired width>]{<filename>.pdf}
%%
%% Images with a different path to the parent latex file can
%% be accessed with the `import' package (which may need to be
%% installed) using
%%   \usepackage{import}
%% in the preamble, and then including the image with
%%   \import{<path to file>}{<filename>.pdf_tex}
%% Alternatively, one can specify
%%   \graphicspath{{<path to file>/}}
%% 
%% For more information, please see info/svg-inkscape on CTAN:
%%   http://tug.ctan.org/tex-archive/info/svg-inkscape
%%
\begingroup%
  \makeatletter%
  \providecommand\color[2][]{%
    \errmessage{(Inkscape) Color is used for the text in Inkscape, but the package 'color.sty' is not loaded}%
    \renewcommand\color[2][]{}%
  }%
  \providecommand\transparent[1]{%
    \errmessage{(Inkscape) Transparency is used (non-zero) for the text in Inkscape, but the package 'transparent.sty' is not loaded}%
    \renewcommand\transparent[1]{}%
  }%
  \providecommand\rotatebox[2]{#2}%
  \ifx\svgwidth\undefined%
    \setlength{\unitlength}{397.58457031bp}%
    \ifx\svgscale\undefined%
      \relax%
    \else%
      \setlength{\unitlength}{\unitlength * \real{\svgscale}}%
    \fi%
  \else%
    \setlength{\unitlength}{\svgwidth}%
  \fi%
  \global\let\svgwidth\undefined%
  \global\let\svgscale\undefined%
  \makeatother%
  \begin{picture}(1,1.1665886)%
    \put(0,0){\includegraphics[width=\unitlength]{subextremalRN1.pdf}}%
    \put(0.81348285,0.59936823){\color[rgb]{0,0,0}\rotatebox{-46.55904601}{\makebox(0,0)[lb]{\smash{\(\mathcal{I}^+\)}}}}%
    \put(0.81824573,0.30635281){\color[rgb]{0,0,0}\rotatebox{43.90880184}{\makebox(0,0)[lb]{\smash{\(\mathcal{I}^-\)}}}}%
    \put(0.57088621,0.56539376){\color[rgb]{0,0,0}\rotatebox{43.50347509}{\makebox(0,0)[lb]{\smash{\(\mathcal{H}^+\)}}}}%
    \put(0.72997213,0.94202906){\color[rgb]{0,0,0}\rotatebox{-92.51717097}{\makebox(0,0)[lb]{\smash{\(r=0\)}}}}%
    \put(0.24359868,0.87493747){\color[rgb]{0,0,0}\rotatebox{90.6592305}{\makebox(0,0)[lb]{\smash{\(r=0\)}}}}%
    \put(0.58146325,0.44698807){\color[rgb]{0,0,0}\makebox(0,0)[lb]{\smash{\(v=const\)}}}%
    \put(0.10417096,0.42425988){\color[rgb]{0,0,0}\makebox(0,0)[lb]{\smash{\(u=const\)}}}%
    \put(0.03219827,0.72162052){\color[rgb]{0,0,0}\makebox(0,0)[lb]{\smash{\(r=const\)}}}%
    \put(0.54806311,0.86559277){\color[rgb]{0,0,0}\rotatebox{-41.89181184}{\makebox(0,0)[lb]{\smash{\(r=r_-\)}}}}%
    \put(0.30384289,0.74193718){\color[rgb]{0,0,0}\rotatebox{46.05515158}{\makebox(0,0)[lb]{\smash{\(r=r_-\)}}}}%
    \put(0.72540852,0.68752818){\color[rgb]{0,0,0}\makebox(0,0)[lb]{\smash{\(i^+\)}}}%
    \put(0.93375041,0.46024617){\color[rgb]{0,0,0}\makebox(0,0)[lb]{\smash{\(i^0\)}}}%
    \put(0.66479998,0.32008897){\color[rgb]{0,0,0}\makebox(0,0)[lb]{\smash{\(I\)}}}%
    \put(0.37690937,0.58903934){\color[rgb]{0,0,0}\makebox(0,0)[lb]{\smash{\(II\)}}}%
    \put(0.28220854,0.84662568){\color[rgb]{0,0,0}\makebox(0,0)[lb]{\smash{\(III\)}}}%
    \put(0.6439658,1.00193501){\color[rgb]{0,0,0}\makebox(0,0)[lb]{\smash{\(IV\)}}}%
  \end{picture}%
\endgroup%

\end{center}
Again, the diamond-shaped region \(I\) represents the black hole exterior and  corresponds to the Lorentzian manifold on which the metric \(g\) from \eqref{RNfamily} was initially defined. The regions \(II\), \(III\) and \(IV\) represent the black hole interior. Recall that Reissner-Nordstr\"om is a spherically symmetric spacetime. The `radius' of the spheres of symmetry is given by a globally defined function \(r\). We write \(D(r):= 1 - \frac{2m}{r} + \frac{e^2}{r^2}\) and denote the two roots of \(D\) with \(r_\pm = m \pm \sqrt{m^2 - e^2}\). The future Cauchy horizon\footnote{We consider a Cauchy surface \(\Sigma_0\) of the big diamond shaped region as shown in the next diagram, i.e., a Cauchy surface of the region pictured in the above diagram without the regions \(III\) and \(IV\),.} is given by \(r=r_-\).
The coordinate functions \((\theta, \varphi)\) parametrise the spheres of symmetry in the usual way and are globally defined up to one meridian. Regions \(I - III\) are covered by a \((v,r,\theta, \varphi)\) coordinate chart, where in the region \(I\), the function \(v\) is given by \(v= t + r_I^*\), where \(r_I^*\) is a function of \(r\), satisfying \(\frac{dr_I^*}{dr} = \frac{1}{D}\). 
With respect to these coordinates, the Lorentzian metric takes the form
\begin{equation*}
g= -D \,dv^2 + dv \otimes dr + dr \otimes dv + r^2 \,d\theta^2 + r^2 \sin^2\theta \,d\varphi^2 \;.
\end{equation*}

Introducing a function \(r_{II}^*\) in region \(II\), which satisfies \(\frac{dr_{II}^*}{dr} = \frac{1}{D}\) in this region, and defining a function\footnote{One could also assign the functions \(t\) an index, specifying in which region they are defined. Note that these different functions \(t\) do \emph{not} patch together to give a globally defined smooth function!} \(t:= v - r_{II}^*\), we obtain a \((t,r,\theta,\varphi)\) coordinate system for region \(II\) in which the metric \(g\) is again given by the algebraic expression \eqref{RNfamily}.  
The regions \(II\) and \(IV\) are covered by a coordinate system \((u,r,\theta, \varphi)\), where the function \(u\) is given in region \(II\) by \(u= t- r_{II}^*\).  

Having laid down the coordinate functions we work with, we now investigate the family of affinely parametrised ingoing null geodesics, given in \((v,r,\theta,\varphi)\) coordinates by
\begin{equation*}
\gamma_{v_0}(s) = ( v_0 , -s , \theta_0 , \varphi_0  ) \;,
\end{equation*}
where \(s\in (-\infty,0)\) and we keep \(\theta_0\), \(\varphi_0\) fixed. Clearly, we have\footnote{Let us denote with a subscript on the partial derivative which other coordinate (apart from \(\theta\) and \(\varphi\)) remains fixed.}  \(\dot{\gamma}_{v_0} = - \frac{\partial}{\partial r} \big|_v\).
We are interested in the energy of these null geodesics in region \(II\) close to \(i^+\) (in the topology of the Penrose diagram), i.e.\ close to the Cauchy horizon separating region $II$ from region $IV$. A suitable notion of energy is given by a regular vector field that is future directed timelike in a neighbourhood of \(i^+\). In order to construct such a vector field, we consider \((u,v,\theta,\varphi)\) coordinates in region \(II\). A straightforward computation shows that 
\begin{equation*}
\begin{split}
N&:= - \frac{1}{r_+ - r} \frac{\partial}{\partial u}\Big|_v + \frac{1}{r - r_-} \frac{\partial}{\partial v}\Big|_u \\[2pt]
&= -\frac{1}{r_+ - r} \frac{\partial}{\partial u} \Big|_r - \frac{1}{2r^2}(r_+ - r_-) \frac{\partial}{\partial r}\Big|_u  \\[2pt]
&= \frac{r_- -r_+}{2r^2} \frac{\partial}{\partial r}\Big|_v + \frac{1}{r - r_-} \frac{\partial}{\partial v}\Big|_r
\end{split}
\end{equation*}
is future directed timelike in a neighbourhood of \(i^+\) intersected with region \(II\) and can be extended to a smooth timelike vector field defined on a neighbourhood of \(i^+\). We obtain
\begin{equation}
\label{comp}
-(N,\dot{\gamma}_{v_0}) = \frac{1}{r-r_-} \;,
\end{equation}
the \(N\)-energy of the null geodesics \(\gamma_{v_0}\) gets infinitely blue-shifted near the Cauchy horizon.

For later reference let us note that the rate, in advanced time \(v\), with which the \(N\)-energy \eqref{comp} of \(\gamma_{v_0}\) blows up along a hypersurface of constant \(u\), is exponential. This is seen as follows: One has 
\begin{equation*}
r^*_{II}(r) = r + \frac{1}{2\kappa_+} \log(r_+-r) + \frac{1}{2\kappa_-} \log(r-r_-) + const \;,
\end{equation*}
where \(\kappa_{\pm} = \frac{r_\pm - r_\mp}{2r_\pm^2}\) are the surface gravities of the event and the Cauchy horizon, respectively.
Thus, for large \(r^*_{II}\) one has \(\frac{1}{r-r_-}(r^*_{II}) \sim e^{-2\kappa_- r^*_{II}}\). Finally, along \(\{u = u_0 = const\}\), we have \(r^*_{II}(v) = \frac{1}{2}(v - u_0)\). It thus follows that the \(N\)-energy \eqref{comp} of \(\gamma_{v_0}\) blows up like \(e^{-\kappa_- v}\) along a hypersurface of constant \(u\).

Let us now consider spacelike slices \(\so\) and \(\Sigma_1\) as in the diagram below, where \(\so\) asymptotes to a hypersurface of constant \(t\) and \(\Sigma_1\) is extendible as a smooth spacelike slice into the neighbouring regions. 
\begin{center}
\def\svgwidth{6cm}
%% Creator: Inkscape 0.48.2, www.inkscape.org
%% PDF/EPS/PS + LaTeX output extension by Johan Engelen, 2010
%% Accompanies image file '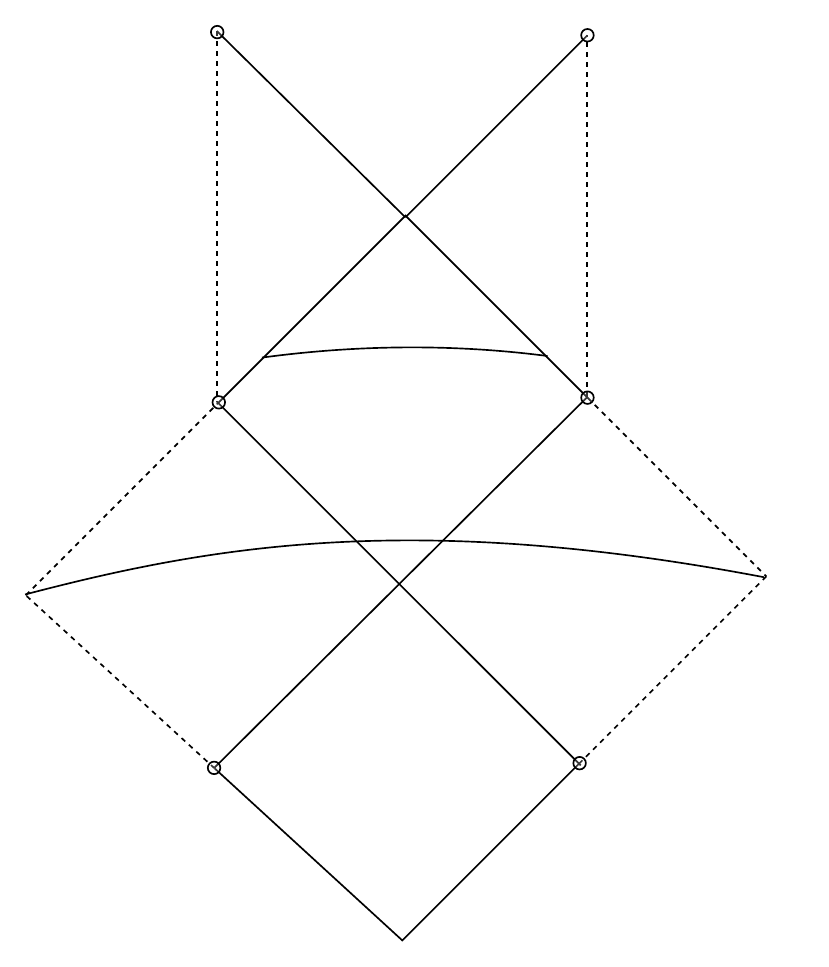' (pdf, eps, ps)
%%
%% To include the image in your LaTeX document, write
%%   \input{<filename>.pdf_tex}
%%  instead of
%%   \includegraphics{<filename>.pdf}
%% To scale the image, write
%%   \def\svgwidth{<desired width>}
%%   \input{<filename>.pdf_tex}
%%  instead of
%%   \includegraphics[width=<desired width>]{<filename>.pdf}
%%
%% Images with a different path to the parent latex file can
%% be accessed with the `import' package (which may need to be
%% installed) using
%%   \usepackage{import}
%% in the preamble, and then including the image with
%%   \import{<path to file>}{<filename>.pdf_tex}
%% Alternatively, one can specify
%%   \graphicspath{{<path to file>/}}
%% 
%% For more information, please see info/svg-inkscape on CTAN:
%%   http://tug.ctan.org/tex-archive/info/svg-inkscape
%%
\begingroup%
  \makeatletter%
  \providecommand\color[2][]{%
    \errmessage{(Inkscape) Color is used for the text in Inkscape, but the package 'color.sty' is not loaded}%
    \renewcommand\color[2][]{}%
  }%
  \providecommand\transparent[1]{%
    \errmessage{(Inkscape) Transparency is used (non-zero) for the text in Inkscape, but the package 'transparent.sty' is not loaded}%
    \renewcommand\transparent[1]{}%
  }%
  \providecommand\rotatebox[2]{#2}%
  \ifx\svgwidth\undefined%
    \setlength{\unitlength}{397.58457031bp}%
    \ifx\svgscale\undefined%
      \relax%
    \else%
      \setlength{\unitlength}{\unitlength * \real{\svgscale}}%
    \fi%
  \else%
    \setlength{\unitlength}{\svgwidth}%
  \fi%
  \global\let\svgwidth\undefined%
  \global\let\svgscale\undefined%
  \makeatother%
  \begin{picture}(1,1.1665886)%
    \put(0,0){\includegraphics[width=\unitlength]{sub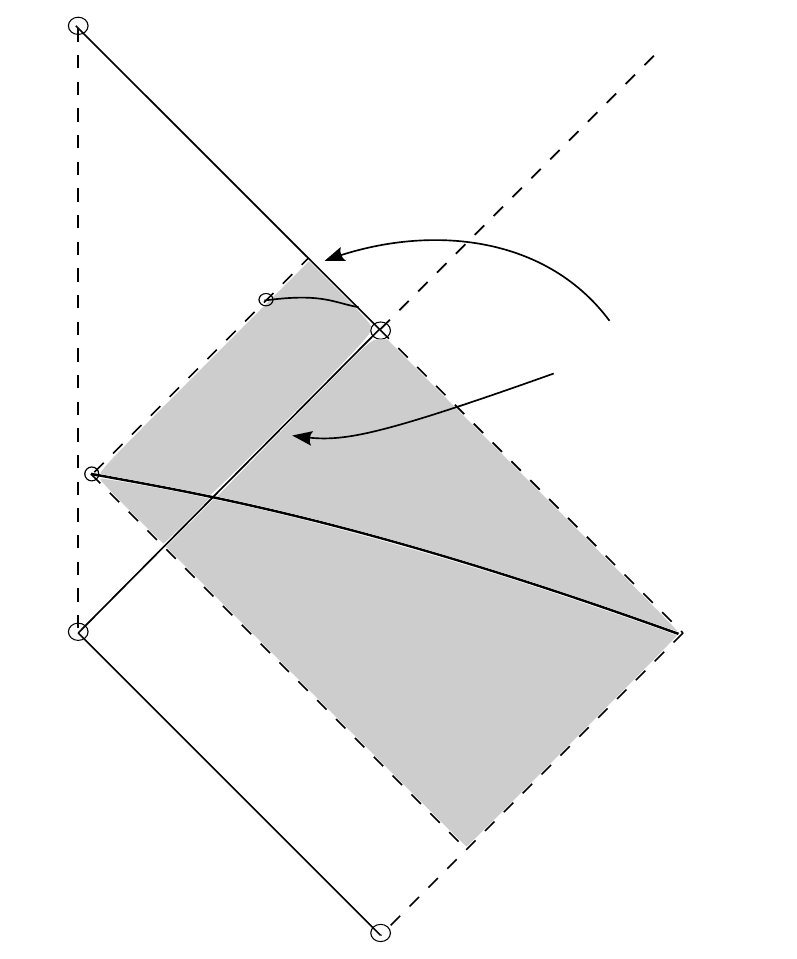}}%
    \put(0.81348285,0.59936823){\color[rgb]{0,0,0}\rotatebox{-46.55904601}{\makebox(0,0)[lb]{\smash{\(\mathcal{I}^+\)}}}}%
    \put(0.82771578,0.30067079){\color[rgb]{0,0,0}\rotatebox{43.90880184}{\makebox(0,0)[lb]{\smash{\(\mathcal{I}^-\)}}}}%
    \put(0.57088621,0.56539376){\color[rgb]{0,0,0}\rotatebox{43.50347509}{\makebox(0,0)[lb]{\smash{\(\mathcal{H}^+\)}}}}%
    \put(0.72997213,0.94202906){\color[rgb]{0,0,0}\rotatebox{-92.51717097}{\makebox(0,0)[lb]{\smash{\(r=0\)}}}}%
    \put(0.24359868,0.87493747){\color[rgb]{0,0,0}\rotatebox{90.6592305}{\makebox(0,0)[lb]{\smash{\(r=0\)}}}}%
    \put(0.54806311,0.86559277){\color[rgb]{0,0,0}\rotatebox{-41.89181184}{\makebox(0,0)[lb]{\smash{\(r=r_-\)}}}}%
    \put(0.30384289,0.74193718){\color[rgb]{0,0,0}\rotatebox{46.05515158}{\makebox(0,0)[lb]{\smash{\(r=r_-\)}}}}%
    \put(0.72540852,0.68752818){\color[rgb]{0,0,0}\makebox(0,0)[lb]{\smash{\(i^+\)}}}%
    \put(0.93375041,0.46024617){\color[rgb]{0,0,0}\makebox(0,0)[lb]{\smash{\(i^0\)}}}%
    \put(0.66669398,0.44698795){\color[rgb]{0,0,0}\makebox(0,0)[lb]{\smash{\(\Sigma_0\)}}}%
    \put(0.36743928,0.68942224){\color[rgb]{0,0,0}\makebox(0,0)[lb]{\smash{\(\Sigma_1\)}}}%
  \end{picture}%
\endgroup%

\end{center}
Since the normal \(n_{\Sigma_1}\) of \(\Sigma_1\) is also regular at the Cauchy horizon, it follows from \eqref{comp} that the \(n_{\Sigma_1}\)-energy of the null geodesics \(\gamma_{v_0}\) blows up along \(\Sigma_1\) when approaching the Cauchy horizon. Moreover, note that the \(n_{\so}\)-energy of the geodesics \(\gamma_{v_0}\) along \(\so\) is uniformly bounded as \(v_0 \to \infty\). We now apply Theorem \ref{symbiosis} to the family of the null geodesics \(\gamma_{v_0}\) with the following further input: the Lorentzian manifold is given by the domain of dependence \(\mathcal{D}(\Sigma_0)\) of \(\so\), the time function is such that \(\so\) and \(\Sigma_1\) are level sets, \(N\) is a timelike vector field that extends \(n_{\so}\) and \(n_{\Sigma_1}\), and finally \(\N\) is a small enough neighbourhoods of \(\gamma_{v_0}\). This yields
\begin{theorem}
\label{Blue1}
Let \(\so\) and \(\Sigma_1\) be spacelike slices in the sub-extremal Reissner-Nordstr\"om spacetime as indicated in the diagram below. Then there exists a sequence \(\{u_i\}_{i\in \mathbb{N}}\) of solutions to the wave equation with initial energy \(E^{n_{\so}}_0(u_i) =1\) on \(\so\) such that the \(n_{\Sigma_1}\)-energy on \(\Sigma_1\) goes to infinity, i.e., \(E^{n_{\Sigma_1}}_1(u_i) \to \infty\) for \(i \to \infty\).
\end{theorem}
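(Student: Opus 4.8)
The plan is to feed the family of ingoing null geodesics $\gamma_{v_0}$ into Theorem \ref{symbiosis} and to play the bounded $N$-energy of these geodesics on $\so$ against their blowing-up $N$-energy on $\Sigma_1$. For each $i \in \mathbb{N}$ I would fix an advanced time $v_0^{(i)}$ with $v_0^{(i)} \to \infty$ and reparametrise $\gamma_{v_0^{(i)}}$ affinely so that $\gamma_{v_0^{(i)}}(0) \in \so$. Writing
\begin{equation*}
e_0^{(i)} := -g\big(n_{\so},\dot{\gamma}_{v_0^{(i)}}\big)\big|_{\I(\gamma_{v_0^{(i)}})\cap\so} \;, \qquad e_1^{(i)} := -g\big(n_{\Sigma_1},\dot{\gamma}_{v_0^{(i)}}\big)\big|_{\I(\gamma_{v_0^{(i)}})\cap\Sigma_1} \;,
\end{equation*}
we have already seen in the discussion preceding the theorem that $e_0^{(i)}$ stays uniformly bounded as $v_0^{(i)} \to \infty$, whereas $e_1^{(i)} \to \infty$ by the blue-shift computation \eqref{comp}, since the intersection $\I(\gamma_{v_0^{(i)}})\cap\Sigma_1$ approaches the Cauchy horizon $\{r=r_-\}$. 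Since the vector field $N$ extends both $n_{\so}$ and $n_{\Sigma_1}$, these are precisely the values $-g(N,\dot{\gamma})|_{\gamma(0)}$ and $-g(N,\dot{\gamma})|_{\I(\gamma)\cap\Sigma_1}$ appearing in Theorem \ref{symbiosis}.

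Next I would apply Theorem \ref{symbiosis} to each geodesic $\gamma_{v_0^{(i)}}$, with the Lorentzian manifold taken to be $\mathcal{D}(\so)$, the time function chosen so that $\so$ and $\Sigma_1$ are level sets, the energy-measuring field being $N$, and $\N = \N_i$ a sufficiently small neighbourhood of $\I(\gamma_{v_0^{(i)}})$. This produces a solution $v_i \in C^\infty(\mathcal{D}(\so),\R)$ of \eqref{waveeq} with $E^N_0(v_i) = e_0^{(i)}$ and, for any prescribed $\mu_i > 0$,
\begin{equation*}
\big| E^N_{1,\N_i\cap\Sigma_1}(v_i) - e_1^{(i)} \big| < \mu_i \;.
\end{equation*}
Here I would lean on the closing sentence of Theorem \ref{symbiosis}: after shrinking $\N_i$ this estimate holds \emph{independently} of condition \eqref{energyestimatecond2}. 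This is the essential point, because near the Cauchy horizon one cannot expect the global-in-space estimate \eqref{energyestimate}; instead the localised estimate of Remark \ref{rem}, performed in the relatively compact region $J^-(\Sigma_1 \cap \N_i) \cap J^+(\so)$, is what keeps the construction legitimate for each fixed $i$.

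Finally I would normalise and let $i\to\infty$. As the wave equation is linear and the $N$-energy is quadratic, setting $u_i := v_i / \sqrt{e_0^{(i)}}$ yields $E^{n_{\so}}_0(u_i) = E^N_0(u_i) = 1$. Because $J^N(u_i) \cdot n_{\Sigma_1} \geq 0$ (the wave stress-energy tensor obeys the dominant energy condition and $N = n_{\Sigma_1}$ on $\Sigma_1$), the energy on all of $\Sigma_1$ dominates the energy on $\N_i \cap \Sigma_1$, so
\begin{equation*}
E^{n_{\Sigma_1}}_1(u_i) = \frac{E^N_1(v_i)}{e_0^{(i)}} \geq \frac{E^N_{1,\N_i\cap\Sigma_1}(v_i)}{e_0^{(i)}} > \frac{e_1^{(i)} - \mu_i}{e_0^{(i)}} \;.
\end{equation*}
Taking, say, $\mu_i = 1$ and recalling that $e_0^{(i)}$ is bounded above while $e_1^{(i)} \to \infty$, the right-hand side diverges, which is precisely the assertion.

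I expect no deep obstacle once the geometric computation \eqref{comp} and the timelike extension $N$ are in hand: the argument is essentially bookkeeping on top of Theorem \ref{symbiosis}. The one point that genuinely needs care is the use of the \emph{localised} form of Theorem \ref{symbiosis} — dodging condition \eqref{energyestimatecond2} — so that the construction survives arbitrarily close to the Cauchy horizon, where $\mathcal{D}(\so)$ ceases to be relatively compact and $N$ is only guaranteed timelike in a neighbourhood of $i^+$.
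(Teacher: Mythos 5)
Your proposal is correct and takes essentially the same route as the paper: the paper likewise applies Theorem \ref{symbiosis} to the family \(\gamma_{v_0}\) on \(\mathcal{D}(\so)\), with a time function making \(\so\) and \(\Sigma_1\) level sets, \(N\) a timelike field extending both normals, small neighbourhoods \(\N\) of each geodesic, and the crucial clause that \eqref{energyinside} holds independently of \eqref{energyestimatecond2}. The bookkeeping you make explicit — rescaling by \(\sqrt{e_0^{(i)}}\) to normalise the initial energy and using the pointwise non-negativity of \(J^N(u_i)\cdot n_{\Sigma_1}\) to pass from the localised energy to the full energy on \(\Sigma_1\) — is left implicit in the paper.
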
 
In particular we can infer from Theorem \ref{Blue1} that there is no uniform energy boundedness statement -- i.e., there is no constant \(C>0\) such that
\begin{equation}
\label{energyboundimp}
\int_{\Sigma_1} J^{n_{\Sigma_1}} (u) \cdot n_{\Sigma_1} \leq C \int_{\Sigma_0} J^{n_{\Sigma_0}} (u) \cdot n_{\Sigma_0} \;,
\end{equation}
holds for all solutions \(u\) of the wave equation.  

Let us remark here that the non-existence of a uniform energy boundedness statement has in particular the following consequence: one cannot choose a time function for the region bounded by \(\Sigma_0\) and \(\Sigma_1\) for which these hypersurfaces are level sets and, moreover, extend the normals of \(\Sigma_0\) and \(\Sigma_1\) to a smooth timelike vector field \(N\) \emph{in such a way that} an energy estimate of the form \eqref{energyestimate} holds. In particular this emphasises the importance of the condition \eqref{energyestimatecond} for the \emph{global} approximation scheme on general Lorentzian manifolds and points out the necessity of a \emph{local} understanding of the approximate solution provided by Theorem \ref{main} and \ref{symbiosis}.

We would also like to bring to the reader's attention that one actually expects that there is no energy boundedness statement at all, no matter how many derivatives one loses or whether one restricts the support of the initial data:
\begin{conjecture}
\label{Con}
For generic compactly supported smooth initial data on \(\Sigma_0\), the \(n_{\Sigma_1}\)-energy along \(\Sigma_1\) of the corresponding solution to the wave equation is infinite.
\end{conjecture}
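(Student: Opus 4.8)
This conjecture lies considerably deeper than Theorem \ref{Blue1}. The Gaussian beam machinery of Part I produces, for each $i$, a \emph{single} high-frequency solution $u_i$ with $E^{n_{\so}}_0(u_i)=1$ and $E^{n_{\Sigma_1}}_1(u_i)\to\infty$; this suffices to destroy uniform boundedness but says nothing about a fixed generic datum. One is tempted to upgrade the sequence to a single bad solution by a Baire category argument: topologise the compactly supported smooth data on $\so$, let $\mathcal E(u_0):=E^{n_{\Sigma_1}}_1(u)\in[0,\infty]$, note that $\mathcal E$ is lower semicontinuous (it is the supremum over $\{r\geq r_-+\delta\}\cap\Sigma_1$ of finite-region energies, each continuous in the data by well-posedness inside $\mathcal{D}(\so)$), and try to show each sublevel set $\{\mathcal E\leq n\}$ has empty interior by perturbing with a convergent superposition $\sum_i c_i u_i$ of beams supported in disjoint $v$-windows near the Cauchy horizon. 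This soft route \emph{fails}: the perturbations one must add are high-frequency, so although they are small in the first order energy norm, their higher order Sobolev norms blow up and the superposition does not converge in the $C^\infty$ (Fréchet) topology in which ``generic compactly supported smooth data'' must be read. Gaussian beams therefore cannot, by themselves, reach the conjecture, and one has to analyse the actual evolution of a fixed smooth datum.

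The plan is thus to combine a sharp \emph{forward} decay analysis in the exterior with a \emph{blue-shift} analysis in the interior. First I would establish two-sided late-time asymptotics (a sharp Price law) for the solution along the event horizon: for data in a suitable weighted space one controls the exterior evolution by the $r^p$-/vector-field method and extracts a leading tail $\phi|_{\mathcal H^+}\sim c\,v^{-p}$ as $v\to\infty$, where the coefficient $c=c(u_0)$ is an explicit functional of the data. \emph{Genericity} enters precisely here: the set $\{c(u_0)=0\}$ is a closed subspace of positive codimension (a hyperplane for the leading coefficient), off which one has a genuine lower bound $|\phi|\gtrsim v^{-p}$ along $\mathcal H^+$. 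Obtaining this sharp \emph{lower} bound---rather than the comparatively routine upper bound---is the crux of the argument.

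Next I would propagate this tail across the event horizon into region $II$ and up to the Cauchy horizon $\{r=r_-\}$. The computation around \eqref{comp} already isolates the mechanism: along a constant-$u$ hypersurface the blue-shift factor grows like $e^{-\kappa_- v}=e^{|\kappa_-|v}$ (recall $\kappa_-=\frac{r_--r_+}{2r_-^2}<0$), so the transverse derivative of $\phi$ entering the regular $n_{\Sigma_1}$-energy is amplified exponentially in advanced time. Using interior energy estimates (or a direct asymptotic expansion of the wave equation near the Cauchy horizon) one shows that the incoming polynomial tail is exponentially amplified, and---crucially, using the sign and structure of the leading asymptotics so that no cancellation occurs---that the energy density on $\Sigma_1$ is bounded below by a multiple of $v^{-2p}e^{2|\kappa_-|v}$. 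Since
\[
\int^{\infty} v^{-2p}\,e^{2|\kappa_-|v}\,dv=\infty,
\]
integrating over the part of $\Sigma_1$ that limits onto the Cauchy horizon forces $E^{n_{\Sigma_1}}_1(u)=\infty$ for every datum with $c(u_0)\neq 0$, which is the asserted generic set.

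The main obstacle is the sharp generic \emph{lower} bound along $\mathcal H^+$ in the first step: one needs not merely that the solution decays at a prescribed rate but that, away from a closed exceptional subspace of positive codimension, it decays \emph{no faster}, with an explicitly nonvanishing leading coefficient. This requires precise global control of the exterior dynamics (the exact Price exponents and constants), well beyond what the approximate, purely local Gaussian beam construction of Part I provides; the interior amplification is then a more quantitative but conceptually cleaner blue-shift estimate, of the same flavour as the heuristic already captured by \eqref{comp}.
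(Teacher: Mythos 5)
The statement you are trying to prove is a \emph{conjecture} in the paper: the author offers no proof, only the heuristics of the surrounding section (the blow-up rate $e^{-\kappa_- v}$ dominating the decay of the ingoing energy along the event horizon) and the remark that, by the analysis of Dafermos in \cite{Daf05a}, the conjecture \emph{reduces} to proving a lower bound on the decay rate of the spherical mean of the generic solution on the horizon. Your proposal is, in outline, exactly this reduction — forward sharp decay with a generic lower bound on $\mathcal{H}^+$, then blue-shift amplification in region $II$ — so you have correctly identified the intended route; but what you have written is a plan, not a proof, and the two steps you defer are precisely the open problems, not technical routine.

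Concretely: (a) The ``sharp Price law with generic nonvanishing leading coefficient'' is asserted, not established. Your claim that $\{c(u_0)=0\}$ is a hyperplane presupposes that $c$ is a well-defined, continuous, not identically vanishing linear functional of the data, which is exactly what a two-sided Price law would give and which is far beyond the $r^p$-method upper bounds you invoke; moreover, for \emph{compactly supported} data (as in the conjecture) the naive leading coefficient (the conserved Newman--Penrose-type quantity) vanishes identically, so the genuine leading tail sits at a lower order and its coefficient is a nonlocal time-integral of the solution — identifying it and proving it generically nonzero is a substantial problem in its own right. (b) Even granting a pointwise lower bound $|\phi|\gtrsim v^{-p}$ on $\mathcal{H}^+$, your interior step multiplies this by the blue-shift factor and integrates; but an energy \emph{lower} bound on $\Sigma_1$ requires excluding destructive interference between the ingoing tail and its backscattering off the interior potential — the paper itself stresses (in its discussion of strong versus weak blue-shift) that the mere \emph{presence} of blue-shift, which is all that the geodesic computation around \eqref{comp} gives, is not sufficient, and that the local geometry near the Cauchy horizon must enter quantitatively. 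Your parenthetical ``using the sign and structure of the leading asymptotics so that no cancellation occurs'' is the entire difficulty, and nothing in Part I of the paper (nor in your sketch) provides it: Gaussian beams are a high-frequency tool, while the tail you must propagate is a low-frequency, polynomially decaying object. So the proposal has the right architecture but genuine gaps at both of its load-bearing steps; it does not settle the conjecture, and neither does the paper.
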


Let us remark here that the analysis carried out in \cite{Daf05a} by Dafermos shows in particular that proving the above conjecture can be reduced to proving a lower bound on the decay rate of the spherical mean of the generic solution (as in Conjecture \ref{Con}) on the horizon.

Before we elaborate in Section \ref{Dis} on the mechanism that leads to the blow-up of the energy near the Cauchy horizon in Theorem \ref{Blue1}, let us investigate the situation for \emph{extremal} Reissner-Nordstr\"om black holes.

\subsubsection{The blue-shift near the Cauchy horizon of an extremal Reissner-Nordstr\"om black hole}
\label{BlueExtreme}

The extremal Reissner-Nordstr\"om black hole is given by the choice \(m=e\) of the parameters in \eqref{RNfamily}. We again consider the maximal analytic extension of the initially defined spacetime. Part of the Penrose diagram is given below:

\begin{center}
\def\svgwidth{4cm}
%% Creator: Inkscape 0.48.3.1, www.inkscape.org
%% PDF/EPS/PS + LaTeX output extension by Johan Engelen, 2010
%% Accompanies image file 'extremalRN2.pdf' (pdf, eps, ps)
%%
%% To include the image in your LaTeX document, write
%%   \input{<filename>.pdf_tex}
%%  instead of
%%   \includegraphics{<filename>.pdf}
%% To scale the image, write
%%   \def\svgwidth{<desired width>}
%%   \input{<filename>.pdf_tex}
%%  instead of
%%   \includegraphics[width=<desired width>]{<filename>.pdf}
%%
%% Images with a different path to the parent latex file can
%% be accessed with the `import' package (which may need to be
%% installed) using
%%   \usepackage{import}
%% in the preamble, and then including the image with
%%   \import{<path to file>}{<filename>.pdf_tex}
%% Alternatively, one can specify
%%   \graphicspath{{<path to file>/}}
%% 
%% For more information, please see info/svg-inkscape on CTAN:
%%   http://tug.ctan.org/tex-archive/info/svg-inkscape
%%
\begingroup%
  \makeatletter%
  \providecommand\color[2][]{%
    \errmessage{(Inkscape) Color is used for the text in Inkscape, but the package 'color.sty' is not loaded}%
    \renewcommand\color[2][]{}%
  }%
  \providecommand\transparent[1]{%
    \errmessage{(Inkscape) Transparency is used (non-zero) for the text in Inkscape, but the package 'transparent.sty' is not loaded}%
    \renewcommand\transparent[1]{}%
  }%
  \providecommand\rotatebox[2]{#2}%
  \ifx\svgwidth\undefined%
    \setlength{\unitlength}{382.34936523bp}%
    \ifx\svgscale\undefined%
      \relax%
    \else%
      \setlength{\unitlength}{\unitlength * \real{\svgscale}}%
    \fi%
  \else%
    \setlength{\unitlength}{\svgwidth}%
  \fi%
  \global\let\svgwidth\undefined%
  \global\let\svgscale\undefined%
  \makeatother%
  \begin{picture}(1,1.20368596)%
    \put(0,0){\includegraphics[width=\unitlength]{extremalRN2.pdf}}%
    \put(0.06794213,0.73233168){\color[rgb]{0,0,0}\rotatebox{91.61201399}{\makebox(0,0)[lb]{\smash{\(r=0\)}}}}%
    \put(0.51951591,0.77612667){\color[rgb]{0,0,0}\makebox(0,0)[lb]{\smash{\(i^+\)}}}%
    \put(0.8783508,0.39111572){\color[rgb]{0,0,0}\makebox(0,0)[lb]{\smash{\(i^0\)}}}%
    \put(0.66687798,0.63791984){\color[rgb]{0,0,0}\rotatebox{-44.80161396}{\makebox(0,0)[lb]{\smash{\(\mathcal{I}^+\)}}}}%
    \put(0.72111551,0.157009){\color[rgb]{0,0,0}\rotatebox{46.73311562}{\makebox(0,0)[lb]{\smash{\(\mathcal{I}^-\)}}}}%
    \put(0.32760331,0.24940116){\color[rgb]{0,0,0}\makebox(0,0)[lb]{\smash{\(I\)}}}%
    \put(0.17119915,0.8581093){\color[rgb]{0,0,0}\makebox(0,0)[lb]{\smash{\(II\)}}}%
    \put(0.4332818,1.01028633){\color[rgb]{0,0,0}\makebox(0,0)[lb]{\smash{\(III\)}}}%
    \put(0.33183043,0.75031725){\color[rgb]{0,0,0}\makebox(0,0)[lb]{\smash{\(\Sigma_1\)}}}%
    \put(0.71227303,0.74186302){\color[rgb]{0,0,0}\makebox(0,0)[lb]{\smash{\(r = m\)}}}%
    \put(0.23395892,0.42307311){\color[rgb]{0,0,0}\rotatebox{46.21177709}{\makebox(0,0)[lb]{\smash{\(\mathcal{H}^+\)}}}}%
    \put(0.55268238,0.40119424){\color[rgb]{0,0,0}\makebox(0,0)[lb]{\smash{\(\Sigma_0\)}}}%
  \end{picture}%
\endgroup%

\end{center}
The region \(I\) represents again the black hole exterior and corresponds to the Lorentzian manifold on which the metric \(g\) from \eqref{RNfamily} was initially defined. The black hole interior extends over the regions \(II\) and \(III\). The discussion of the functions \(r,\theta\) and \(\varphi\) carries over from the sub-extremal case. However, in the extremal case, \(D(r)\) has a double zero at \(r=m\), the value of the radius of the spheres of symmetry on the event, as well as on the Cauchy horizon.  The regions \(I\) and \(II\) can be covered by `ingoing' null coordinates \((v,r,\theta,\varphi)\), where the function \(v\) is given in region \(I\) by \(v = t + r_{I}^*\), where again \(r^*_I(r)\) satisfies \(\frac{dr_I^*}{dr} = \frac{1}{D}\). In the same way as in the sub-extremal case one introduces \(r^*_{II}\) and defines a \((t,r,\theta,\varphi)\) coordinate system for the region \(II\). Finally, the regions \(II\) and \(III\) are covered by `outgoing' null coordinates \((u,r,\theta, \varphi)\), where we have \(u= t- r^*_{II}\) in region \(II\).

In ingoing null coordinates, the affinely parametrised radially ingoing null geodesics are given by \(
\gamma_{v_0}(s) = ( v_0 , -s , \theta_0 , \varphi_0  )\;\), 
where \(s\in (-\infty,0)\). Expressing the tangent vector of \(\gamma_{v_0}\) in region  \(II\) in outgoing coordinates, we obtain
\begin{equation}
\label{holonomy}
\dot{\gamma}_{v_0} = -\frac{\partial}{\partial r}\Big|_v = \frac{2}{D} \frac{\partial}{\partial u} \Big|_r - \frac{\partial}{\partial r} \Big|_u \;,
\end{equation}
which blows up at \(r=m\). Thus, we have for any future directed timelike vector field \(N\) in region \(II\) which extends to a regular timelike vector field to region \(III\), that the \(N\)-energy \(-g(\dot{\gamma}_{v_0},N)\) of \(\gamma_{v_0}\) blows up along the hypersurface \(\Sigma_1\) for \(v_0 \to \infty\). Choosing now a spacelike slice \(\Sigma_0\) as in the above diagram, again asymptoting to a \(\{t= const\}\) hypersurface at \emph{i}\(^0\), and restricting consideration to its domain of dependence, we obtain a globally hyperbolic spacetime (the shaded region) with respect to which we can apply Theorem \ref{symbiosis}, inferring the analogon of Theorem \ref{Blue1} for extremal Reissner-Nordstr\"om black holes.

For the discussion in the next section, we again investigate the rate, in advanced time \(v\), with which the \(N\)-energy \(-g(\dot{\gamma}_{v_0},N)\) blows up along a hypersurface of constant \(u\):  Here, we have 
\begin{equation*}
r^*_{II}(r) =r + m\log\big((r-m)^2\big) - \frac{m^2}{(r-m)} + const \;.
\end{equation*}
It follows that for large \(r^*_{II}\) one has \(\frac{1}{D}(r^*_{II}) \sim  (r^*_{II})^2\). Moreover, along \(\{ u = u_0 = const\}\), we have \(r^*_{II}(v) = \frac{1}{2}(v-u_0)\), from which it follows that the \(N\)-energy \(-g(\dot{\gamma}_{v_0},N)\) of the family of null geodesics \(\gamma_{v_0}\) blows up like \(v^2\).

\subsubsection{The strong and the weak blue-shift -- and their relevance for strong cosmic censorship}
\label{Dis}

In the example of sub-extremal Reissner-Nordstr\"om as well as in the example of extremal Reissner-Nordstr\"om, the energy of the Gaussian beams is blue-shifted near the Cauchy horizon. Although not important for the proof of the \emph{qualitative} result of Proposition \ref{Blue1} (and the analogous statement for the extremal case), the difference in the \emph{quantitative} blow-up rate of the energy in the two cases is conspicuous.

Let us first recall the familiar heuristic picture that explains the basic mechanism responsible for the blue-shift effect in both cases\footnote{Below, we give the picture for the sub-extremal case. However, the picture and the heuristics for the extremal case are exactly the same!}:
\begin{center}
\def\svgwidth{4cm}
%% Creator: Inkscape 0.48.3.1, www.inkscape.org
%% PDF/EPS/PS + LaTeX output extension by Johan Engelen, 2010
%% Accompanies image file '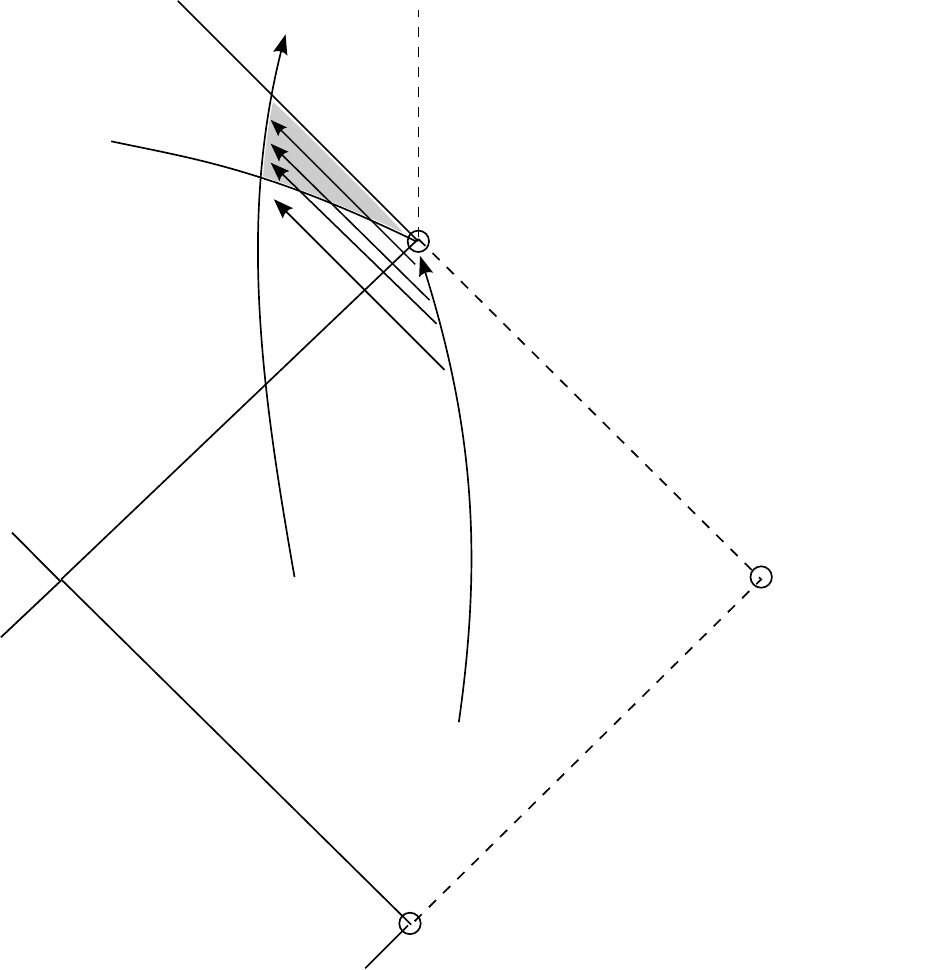' (pdf, eps, ps)
%%
%% To include the image in your LaTeX document, write
%%   \input{<filename>.pdf_tex}
%%  instead of
%%   \includegraphics{<filename>.pdf}
%% To scale the image, write
%%   \def\svgwidth{<desired width>}
%%   \input{<filename>.pdf_tex}
%%  instead of
%%   \includegraphics[width=<desired width>]{<filename>.pdf}
%%
%% Images with a different path to the parent latex file can
%% be accessed with the `import' package (which may need to be
%% installed) using
%%   \usepackage{import}
%% in the preamble, and then including the image with
%%   \import{<path to file>}{<filename>.pdf_tex}
%% Alternatively, one can specify
%%   \graphicspath{{<path to file>/}}
%% 
%% For more information, please see info/svg-inkscape on CTAN:
%%   http://tug.ctan.org/tex-archive/info/svg-inkscape
%%
\begingroup%
  \makeatletter%
  \providecommand\color[2][]{%
    \errmessage{(Inkscape) Color is used for the text in Inkscape, but the package 'color.sty' is not loaded}%
    \renewcommand\color[2][]{}%
  }%
  \providecommand\transparent[1]{%
    \errmessage{(Inkscape) Transparency is used (non-zero) for the text in Inkscape, but the package 'transparent.sty' is not loaded}%
    \renewcommand\transparent[1]{}%
  }%
  \providecommand\rotatebox[2]{#2}%
  \ifx\svgwidth\undefined%
    \setlength{\unitlength}{451.36787109bp}%
    \ifx\svgscale\undefined%
      \relax%
    \else%
      \setlength{\unitlength}{\unitlength * \real{\svgscale}}%
    \fi%
  \else%
    \setlength{\unitlength}{\svgwidth}%
  \fi%
  \global\let\svgwidth\undefined%
  \global\let\svgscale\undefined%
  \makeatother%
  \begin{picture}(1,1.03064491)%
    \put(0,0){\includegraphics[width=\unitlength]{blue.pdf}}%
    \put(0.64806636,0.62813214){\color[rgb]{0,0,0}\rotatebox{-47.10964241}{\makebox(0,0)[lb]{\smash{\(\mathcal{I}^+\)}}}}%
    \put(0.67545934,0.1625757){\color[rgb]{0,0,0}\rotatebox{45.60050893}{\makebox(0,0)[lb]{\smash{\(\mathcal{I}^-\)}}}}%
    \put(0.83732776,0.40435824){\color[rgb]{0,0,0}\makebox(0,0)[lb]{\smash{\emph{i}\(^0\)}}}%
    \put(0.47018985,0.78162411){\color[rgb]{0,0,0}\makebox(0,0)[lb]{\smash{\emph{i}\(^+\)}}}%
    \put(0.51576556,0.46259387){\color[rgb]{0,0,0}\makebox(0,0)[lb]{\smash{\(\sigma_0\)}}}%
    \put(0.30307873,0.56893717){\color[rgb]{0,0,0}\makebox(0,0)[lb]{\smash{\(\sigma_1\)}}}%
    \put(0.17772619,0.55422793){\color[rgb]{0,0,0}\rotatebox{43.7999252}{\makebox(0,0)[lb]{\smash{\(\mathcal{H}^+\)}}}}%
  \end{picture}%
\endgroup%

\end{center}
The observer \(\sigma_0\) travels along a timelike curve of infinite proper time to \emph{i}\(^+\) and, in regular time intervals, sends signals of the same energy into the black hole. These signals are received by the observer \(\sigma_1\), who travels into the black hole and crosses the Cauchy horizon, within \emph{finite} proper time - which leads to an infinite blue shift. This mechanism was first pointed out by Roger Penrose, see \cite{Pen68}, page 222.\footnote{There, he describes the above scenario in the following, more dramatic language (he considers the scenario of gravitational collapse, where the Einstein equations are coupled to some matter model and denotes the Cauchy horizon with \(H_+(\mathcal{H})\)):
\begin{quote}
There is a further difficulty confronting our observer who tries to cross \(H_+(\mathcal{H})\). As he looks out at the universe that he is ``leaving behind,'' he sees, in one final flash, as he crosses \(H_+(\mathcal{H})\), the entire later history of the rest of his ``old universe''. [...] If, for example, an unlimited amount of matter eventually falls into the star then presumably he will be confronted with an infinite density of matter along ``\(H_+(\mathcal{H})\).'' Even if only a finite amount of matter falls in, it may not be possible, in generic situations to avoid a curvature singularity in place of \(H_+(\mathcal{H})\).
\end{quote}}
Although the picture, along with its heuristics, allow for inferring the \emph{presence} of a blue-shift near the Cauchy horizon, they do not reveal the \emph{strength} of the blue-shift. For investigating the latter, it is important to note that the region in spacetime, which actually causes the blue shift, is a neighbourhood of the Cauchy horizon. This neighbourhood is not well-defined, however, one could think of it as being given by a neighbourhood of constant \(r\) -- the shaded region in the diagram of sub-extremal Reissner-Nordstr\"om above. The crucial difference between the sub-extremal and the extremal case is that in the extremal case, the blue-shift degenerates \emph{at the Cauchy horizon itself}, while in the sub-extremal case, it does not: the sub-extremal Cauchy horizon continues to blue-shift radiation. In particular, one can prove an analogous result to Proposition \ref{backsharp} there - but for the forward problem.

This degeneration of the blue-shift towards the Cauchy horizon in the extremal case leads to the (total) blue-shift
%\footnote{With `total blue-shift' we mean here the increase of the energy on \(\Sigma_1\) compared to the initial energy on \(\so\) - in contrast to the more general term `blue-shift', which might also denote the `continuous version', whose `integration' in the neighbourhood of the horizon yields the total blue-shift.}
being weaker than the blue-shift in the sub-extremal case. Thus, the geometry of spacetime near the Cauchy horizon is crucial for understanding the strength of the blue-shift effect, and hence the blow-up rate of the energy.

We now continue with a heuristic discussion of the importance of the different blow-up rates. The reader might have noticed that we only made Conjecture \ref{Con} for the sub-extremal case; and indeed, the analogous conjecture for the extremal case is expected to be false: While in our construction we consider a family of ingoing wave packets whose energy along a fixed outgoing null ray to \(\mathcal{I}^+\) does \emph{not} decay in advanced time \(v\), the scattered `ingoing energy' of a wave with initial data as in Conjecture \ref{Con} will decay in advanced time \(v\) along such an outgoing null ray. Thus, the blow-up of the energy near the Cauchy horizon can be counteracted by the decay of the energy of the wave towards null infinity. In the extremal case, the blow-up rate is \(v^2\), which does not dominate the decay rate of the energy towards null infinity; the exponential blow-up rate \(e^{-\kappa_- v}\), however, does. These are the heuristic reasons for only formulating Conjecture \ref{Con} for the sub-extremal case. We conclude with a couple of remarks: Firstly, one should actually compare the decay rate of the ingoing energy not along an outgoing null ray to \(\mathcal{I}^+\), but along the event horizon - or even better, along a spacelike slice in the interior of the black hole approaching \emph{i}\(^+\) in the topology of the Penrose diagram. Secondly, we would like to repeat and stress the point made, namely that the heuristics given in the very beginning of this section, and which solely ensure the \emph{presence} of a blue-shift, are not sufficient to cause a \(C^1\) instability of the wave at the Cauchy horizon. For this to happen, the local geometry of the Cauchy horizon is crucial. Finally, let us conjecture, based on the fact that in the extremal case one gains powers of \(v\) in the blow-up rate at the Cauchy horizon when considering higher order energies, that there is some natural number \(k >1\) such that waves with initial data as in Conjecture \ref{Con} exhibit a \(C^k\) instability at the Cauchy horizon.

We conclude this section with recalling that the study of the wave equation on black hole backgrounds serves as a source of intuition for the behaviour of gravitational perturbations of these spacetimes. Thus, the following expected picture emerges: Consider a \emph{generic} dynamical spacetime which at late times approaches a sub-extremal Reissner-Nordstr\"om black hole. Then the Cauchy horizon is replaced by a weak null curvature singularity (for this notion see \cite{Daf05a}).

If we restrict consideration to the class of dynamical spacetimes which at late times approach an extremal Reissner-Nordstr\"om black hole, then the \emph{generic} spacetime \emph{within this class} has a more regular Cauchy horizon, which in particular is not seen as a singularity from the point of view of the low regularity well-posedness theory for the Einstein equations, see the resolution \cite{KlRodSzef12} of the \(L^2\)-curvature conjecture. This picture is also supported by the recent numerical work \cite{MuReTa13}.

\subsubsection{Trapping at the horizon of an extremal Reissner-Nordstr\"om black hole}
\label{extremal}
We again consider the extremal Reissner-Nordstr\"om black hole. With $v$ defined as in Section \ref{BlueExtreme}, we introduce the function \(t^* := v - r\). In the coordinates $(t^*,r,\theta, \varphi)$ the metric takes the form
\begin{equation*}
g= -D\,(dt^*)^2 + (1-D)\,(dt^* \otimes dr + dr \otimes dt^*) + (2-D)\,dr^2 + r^2\, d\theta^2 + r^2 \sin^2\theta\,d\varphi^2\;.
\end{equation*}
We see that the foliation of the exterior given by  \(\Sigma_\tau = \{t^* = \tau\}\) is a foliation by spacelike slices, which is invariant under the flow of the stationary Killing vector field $\partial_{t^*}$ and is regular at the event horizon $\mathcal{H}^+$ in the sense that it extends smoothly as a spacelike foliation across the event horizon. 

\begin{center}
\def\svgwidth{4cm}
%% Creator: Inkscape 0.48.2, www.inkscape.org
%% PDF/EPS/PS + LaTeX output extension by Johan Engelen, 2010
%% Accompanies image file '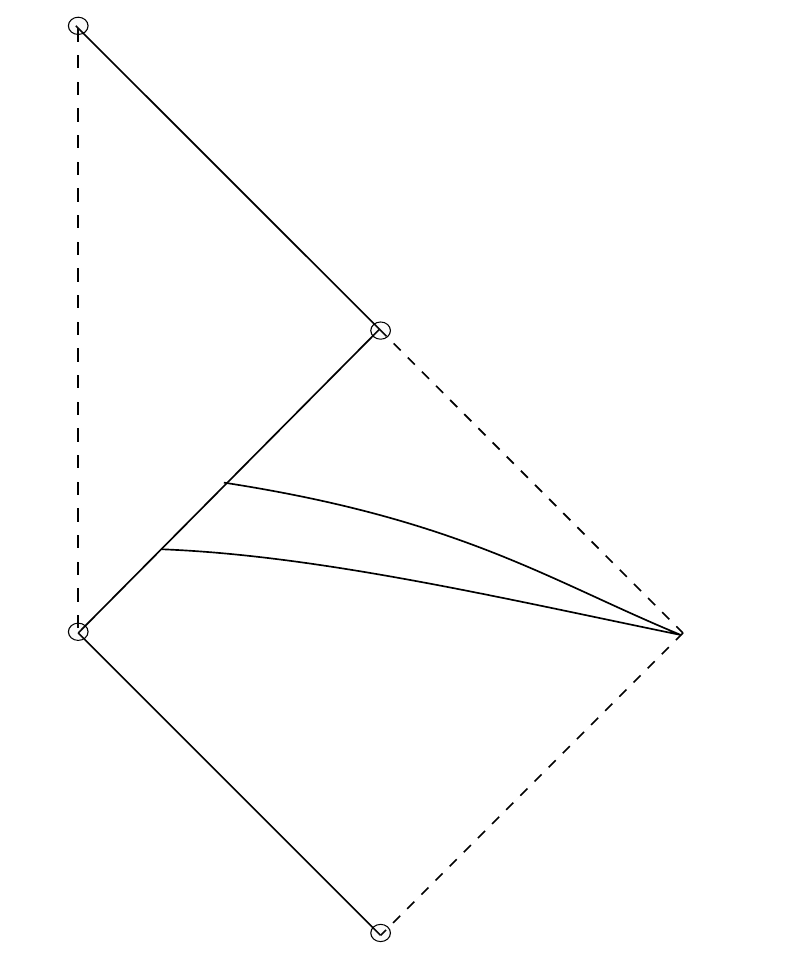' (pdf, eps, ps)
%%
%% To include the image in your LaTeX document, write
%%   \input{<filename>.pdf_tex}
%%  instead of
%%   \includegraphics{<filename>.pdf}
%% To scale the image, write
%%   \def\svgwidth{<desired width>}
%%   \input{<filename>.pdf_tex}
%%  instead of
%%   \includegraphics[width=<desired width>]{<filename>.pdf}
%%
%% Images with a different path to the parent latex file can
%% be accessed with the `import' package (which may need to be
%% installed) using
%%   \usepackage{import}
%% in the preamble, and then including the image with
%%   \import{<path to file>}{<filename>.pdf_tex}
%% Alternatively, one can specify
%%   \graphicspath{{<path to file>/}}
%% 
%% For more information, please see info/svg-inkscape on CTAN:
%%   http://tug.ctan.org/tex-archive/info/svg-inkscape
%%
\begingroup%
  \makeatletter%
  \providecommand\color[2][]{%
    \errmessage{(Inkscape) Color is used for the text in Inkscape, but the package 'color.sty' is not loaded}%
    \renewcommand\color[2][]{}%
  }%
  \providecommand\transparent[1]{%
    \errmessage{(Inkscape) Transparency is used (non-zero) for the text in Inkscape, but the package 'transparent.sty' is not loaded}%
    \renewcommand\transparent[1]{}%
  }%
  \providecommand\rotatebox[2]{#2}%
  \ifx\svgwidth\undefined%
    \setlength{\unitlength}{382.34936523bp}%
    \ifx\svgscale\undefined%
      \relax%
    \else%
      \setlength{\unitlength}{\unitlength * \real{\svgscale}}%
    \fi%
  \else%
    \setlength{\unitlength}{\svgwidth}%
  \fi%
  \global\let\svgwidth\undefined%
  \global\let\svgscale\undefined%
  \makeatother%
  \begin{picture}(1,1.20368596)%
    \put(0,0){\includegraphics[width=\unitlength]{extremalRN.pdf}}%
    \put(0.06794213,0.73233167){\color[rgb]{0,0,0}\rotatebox{91.61201399}{\makebox(0,0)[lb]{\smash{\(r=0\)}}}}%
    \put(0.5047209,0.80360307){\color[rgb]{0,0,0}\makebox(0,0)[lb]{\smash{\(i^+\)}}}%
    \put(0.8783508,0.39111571){\color[rgb]{0,0,0}\makebox(0,0)[lb]{\smash{\(i^0\)}}}%
    \put(0.24562475,0.59021187){\color[rgb]{0,0,0}\rotatebox{44.43593592}{\makebox(0,0)[lb]{\smash{\(\mathcal{H}^+\)}}}}%
    \put(0.66687798,0.63791983){\color[rgb]{0,0,0}\rotatebox{-44.80161396}{\makebox(0,0)[lb]{\smash{\(\mathcal{I}^+\)}}}}%
    \put(0.7211155,0.15700899){\color[rgb]{0,0,0}\rotatebox{46.73311562}{\makebox(0,0)[lb]{\smash{\(\mathcal{I}^-\)}}}}%
    \put(0.3138491,0.42444353){\color[rgb]{0,0,0}\makebox(0,0)[lb]{\smash{\(\Sigma_0\)}}}%
    \put(0.41846547,0.59182965){\color[rgb]{0,0,0}\makebox(0,0)[lb]{\smash{\(\Sigma_\tau\)}}}%
  \end{picture}%
\endgroup%

\end{center}

An appropriate choice of timelike vector field for measuring the energy of waves in the black hole exterior is thus given by \(N=-(dt^*)^\sharp\), since it is also invariant under the flow of the Killing vector field $\partial_{t^*}$ and extends smoothly as a timelike vector field across the event horizon. Hence, the corresponding $N$-energy is non-degenerate at the event horizon. Note that these choices of foliation and timelike vector field $N$ correspond qualitatively to the choices made in the Schwarzschild spacetime in Sections \ref{photonsphere} and \ref{red2}.

In \cite{Are11a} and \cite{Are11b} Aretakis investigated the behaviour of waves on this spacetime and obtained stability (i.e., boundedness and decay results) as well as instability results (blow-up of certain higher order derivatives along the horizon); for further developments see also \cite{LuReall12}. The instability results originate from a conservation law on the extremal horizon once decay results for the wave are established.  In order to obtain these stability results Aretakis followed the new method introduced by Dafermos and Rodnianski in \cite{DafRod09b}.\footnote{Though in addition he had to work with a degenerate energy, which makes things more complicated.} The first important step is to prove an ILED statement. As in the Schwarzschild spacetime we have trapping at the photon sphere (here at \(\{r=2m\}\)), and as shown before, an ILED statement has to degenerate there in order to hold. The fundamentally new difficulty in the extremal setting arises from the degeneration of the red-shift effect at the horizon \(\mathcal{H}^+\), which was needed for proving an ILED statement that holds up to the horizon (see for example \cite{DafRod08}). And indeed, the energy of the generators of the horizon is no longer decaying: 
In \((t^*,r,\theta,\varphi)\) coordinates, the affinely parametrised generators are given by
\begin{equation*}
\gamma(s) = ( s , m , \theta_0 , \varphi_0  ) \;,
\end{equation*}
where \(s \in (-\infty,\infty)\) and again \(\theta_0\), \(\varphi_0\) are fixed. Hence, we see that the $N$-energy of the generators of the horizon is constant: \(-(N,\dot{\gamma}) =1\).

If we consider a globally hyperbolic subset of the depicted part of extremal Reissner-Nordstr\"om that contains the horizon \(\mathcal{H}^+\), for example by extending \(\so\) a bit through the event horizon and then considering its domain of dependence, we can directly infer from Theorem \ref{LED} and \ref{ILED}, by applying it to the null geodesic \(\gamma\) from above, that every (I)LED statement which concerns a neighbourhood of the horizon, necessarily has to lose differentiability. However, we can also infer the same result for the wave equation on the Lorentzian manifold \(\mathcal{D}(\so)\), where `a neighbourhood of the horizon' is `a neighbourhood of the horizon in the previous, bigger spacetime, intersected with \(\mathcal{D}(\so)\)': Analogous  to the proof of Proposition \ref{backsharp}, we consider a sequence of radially outgoing null geodesics in \(\mathcal{D}(\so)\) whose initial data on \(\so\) converges to the data of \(\gamma\) from above. For every `neighbourhood of the horizon', for every \(\tau_0 >0\) and for every (small) \(\mu >0\) there is then an element \(\gamma_0\) of the sequence such that \(-(N,\dot{\gamma}_0)|_{\I(\gamma_0) \cap \st} \in(1-\mu, 1+ \mu)\) for all \(0 \leq \tau \leq \tau_0\). This follows again from the smooth dependence of geodesics on their initial data. We now apply Theorem \ref{symbiosis} to this sequence of null geodesics to infer that for every `neighbourhood of the horizon' and for every \(\tau_0 >0\) we can construct a solution to the wave equation whose energy in this neighbourhood is, say, bigger than \(\frac{1}{2}\) for all times \(\tau\) with \(0 \leq \tau \leq \tau_0\). This proves again that there is no non-degenerate (I)LED statement concerning `a neighbourhood of the horizon' in \(\mathcal{D}(\so)\); the trapping at the event horizon obstructs local energy decay - which is in stark contrast to sub-extremal black holes. 

One should ask now whether an ILED statement with loss of derivative can actually hold. To answer this question, at least partially, it is helpful to decompose the angular part of the wave into spherical harmonics. In \cite{Are11a} Aretakis proved indeed an (I)LED statement with loss of one derivative for waves that are supported on the angular frequencies \(l\geq 1\). By constructing a localised solution with vanishing spherical mean we can show that this result is optimal in the sense that \emph{some} loss of derivative is again necessary. This can be done for instance by considering the superposition of two Gaussian beams that follow the generators \(\gamma_1(s) =(s, m, \frac{\pi}{2}, \frac{\pi}{2})\) and \(\gamma_2(s) =(s, m, \frac{\pi}{2}, \frac{3\pi}{2})\), where the initial value of beam one is exactly the negative of the initial value of beam two if translated in the \(\varphi\) variable by \(\pi\).\footnote{Let us mention here that in this particular situation the approximation using geometric optics is easier. Indeed, one can easily write down a solution of the eikonal equation such that the characteristics are the outgoing null geodesics. First one has to prove then the analogue of Theorem \ref{main}, which is easier since the approximate conservation law we used in the case of Gaussian beams is replaced by an exact conservation law for the geometric optics approximation, cf.\ footnote \ref{foot}. But then we can easily contradict the validity of (I)LED statements for any angular frequency: working in \((t^*,r,\theta,\varphi)\) coordinates, we choose the initial value of the function \(a\) (see Appendix \ref{geomop}) to have the angular dependence of a certain spherical harmonic and the radial dependence corresponds to a smooth cut-off, i.e., \(a\) initially is only non-vanishing for \(r \in [m,m+\varepsilon)\).}  
The question whether one can prove an ILED statement with loss of derivative in the case \(l=0\) is still open, though it is expected that the answer is negative. In order to obtain stability results for waves supported on all angular frequencies Aretakis had to use the degenerate energy (of course these results are weaker than results one would obtain if an ILED statement for the case \(l=0\) actually held).

\subsection{Applications to Kerr black holes} 
\label{KerrSec}
The Kerr family is a \(2\)-parameter family of solutions to the vacuum Einstein equations. Let us fix the manifold \(M:=\R \times (m + \sqrt{m^2 -a^2}, \infty) \times \mathbb{S}^2\), where \(m\) and \(a\) are real parameters that will model the mass and the angular momentum per unit mass of the black hole, respectively, and which are restricted to the range \(0\leq a \leq m\), \(0\neq m\). Let \((t,r,\theta,\varphi)\) denote the standard coordinates on the manifold \(M\) and define functions
\begin{align*}
\rho^2 &:= r^2 + a^2\cos^2\theta \\
\Delta &:= r^2 - 2mr +a^2 \\
g_{tt} &:= -1 + \frac{2mr}{\rho^2} \\
g_{t\varphi} &:= -\frac{2mra\sin^2\theta}{\rho^2} \\
g_{\varphi \varphi} &:= \big(r^2 +a^2 + \frac{2mra^2\sin^2\theta}{\rho^2}\big) \sin^2\theta \;.
\end{align*}
The metric on \(M\) is then defined by
\begin{equation*}
g= g_{tt}\,dt^2 - g_{t\varphi} \,(d\varphi \otimes dt + dt \otimes d\varphi) + g_{\varphi \varphi} \,d\varphi^2 + \frac{\rho^2}{\Delta} \,dr^2 + \rho^2 \,d\theta^2\;.
\end{equation*}
The roots of $\Delta(r)$ are denoted by $r_-$ and $r_+$, where $r_\pm = m \pm \sqrt{m^2 -a^2}$.
As for the Reissner-Nordstr\"om family, one can (and should) extend these spacetimes in order to understand their physical interpretation as a black hole. For details, we refer the reader again to \cite{HawkEllis}. Fixing the \(\theta\) coordinate to be \(\frac{\pi}{2}\) and moding out the \(\mathbb{S}^1\) corresponding to the \(\varphi\) coordinate, we again obtain pictorial representations of these spacetimes. For the sub-extremal case \(0 < a < m\), the diagram is the same as the one depicted in Section \ref{sub-extremal}, while in the extremal case \(a =m\), one obtains the same diagram as in Section \ref{extremal}.

\subsubsection{Trapping in (sub)-extremal Kerr}
\label{SecKerrTrap}
As in the case of the Schwarzschild spacetime there are trapped null geodesics in the domain of outer communications of the Kerr spacetime whose energy stays bounded away from zero and infinity if the energy measuring vector field \(N\) is sensibly chosen. In the case of \(a>0\), however, the set that accomodates trapped null geodesics is the closure of an \emph{open} set in spacetime, which is in contrast to the \(3\)-dimensional photonsphere in Schwarzschild and Reissner-Nordstr\"om. Before we explain in some more detail how to find the trapped geodesics, we set up a suitable  choice of foliation and energy measuring vector field:  

For (sub)-extremal Kerr we foliate the domain of outer communication (which is covered by the above \((t,r,\theta,\varphi)\) coordinates) in the same way as we did before for the Schwarzschild and the extremal Reissner-Nordstr\"om spacetimes, namely by first introducing an ingoing `null' coordinate \(v\) and then subtracting off \(r\) to get a good time coordinate \(t^*\). Slightly more general than needed at this point, let us define
\begin{equation*}
v_+ := t + r^* \qquad \textnormal{ and } \qquad \varphi_+ := \varphi + \bar{r}\;,
\end{equation*}
where \(r^*\) is defined up to a constant by \(\frac{dr^*}{dr} = \frac{r^2 + a^2}{\Delta}\) and \(\bar{r}\) is defined up to a constant by \(\frac{d\bar{r}}{dr} = \frac{a}{\Delta}\). The set of functions \((v_+, r, \theta, \varphi_+)\) form ingoing `null' coordinates (\(v_+\) is here the `null' coordinate, however, it does \emph{not} satisfy the eikonal equation \(d\phi \cdot d\phi =0\)), they cover the regions \(I, II\) and \(III\) in the spacetime diagram for sub-extremal Kerr\footnote{In the extremal case they cover all of the in Section \ref{extremal} depicted spacetime diagram.} and the metric takes the form
\begin{align*}
g= g_{tt} \,dv_+^2 + &g_{t\varphi}\,(dv_+ \otimes d\varphi_+ + d\varphi_+ \otimes dv_+) + (dv_+ \otimes dr + dr \otimes dv_+)  - a \sin^2\theta (dr \otimes d\varphi_+ + d\varphi_+ \otimes dr) \\ &+ g_{\varphi \varphi}\, d\varphi_+^2 + \rho^2 \, d\theta^2  \;.
\end{align*}
Finally, we define \(t^* := v_+ - r\). That this is indeed a good time coordinate is easily seen from writing the metric in \((t^*, r, \theta, \varphi_+)\) coordinates and restricting it to \(\{t^* = const\}\) slices: One obtains
\begin{equation*}
\bar{g}= (g_{tt} + 2)\, dr^2 + (g_{t\varphi} - a \sin^2\theta)\,(d\varphi_+ \otimes dr + dr \otimes d\varphi_+) + \rho^2 \,d\theta^2  + g_{\varphi \varphi}\, d\varphi_+^2 \;,
\end{equation*}
and the \((\theta, \theta)\) minor of this matrix is found to be \(2mr \sin^2\theta + (r^2 + a^2) \sin^2\theta - a^2\sin^4\theta\), which is positive away from the well understood coordinate singularity \(\theta = \{0,\frac{\pi}{2}\}\). Hence, the slices \(\Sigma_\tau := \{t^* = \tau\}\) are spacelike and it is easily seen that they asymptote to \(\{t=const\}\) slices near spacelike infinity and end on the \emph{future} event horizon. A suitable timelike vector field \(N\) for measuring the energy is again given by \(N:= -(dt^*)^\sharp\).

To be more precise about what we mean by a null geodesic being trapped, let us call a future complete affinely parametrised null geodesic $\gamma : [0,\infty) \to M$ (which is in particular contained in the black hole exterior $M$) \emph{trapped} if, and only if, it does not escape do infinity, i.e., for $s \to \infty$ we do \emph{not} have $(r \circ \gamma)(s) \to \infty$.
In the following we give a brief sketch of how one finds the trapped null geodesics. For a detailed discussion of the geodesic flow we refer the reader to \cite{ONeillKerr} or \cite{ChandBlackHole}.

The starting point for the investigation of the behaviour of the geodesics in the Kerr spacetime is the observation that the geodesic flow separates. An affinely parametrised \emph{null} geodesic \(\gamma(s) = \big((t(s), r(s), \theta(s), \varphi(s)\big)\) satisfies the following first order equations:
\begin{align}
\rho^2 \dot{t} &= a \Db + (r^2 + a^2) \frac{\Pb}{\Delta} \label{teq} \\
\rho^4 (\dot{r})^2 &= R(r) := -\mathcal{K}\Delta + \Pb^2 \label{req}\\
\rho^4 (\dot{\theta})^2 &= \Theta(\theta) := \mathcal{K} - \frac{\Db^2}{\sin^2\theta} \label{theteq} \\
\rho^2 \dot{\varphi} &= \frac{\Db}{\sin^2\theta} + \frac{a \Pb}{\Delta} \notag \;,
\end{align}
where \(\mathcal{K}\) is the Carter constant of the geodesic, \(\Pb(r) = (r^2 + a^2) E - La\) and \(\Db(\theta) = L - Ea\sin^2\theta\). Here,  \(E = -g(\partial_t, \dot{\gamma})\) is the energy of the geodesic\footnote{Note that \(\partial_t\) is not timelike everywhere! However, one still calls this quantity the `energy' of the null geodesic.} and \(L = g(\partial_\varphi, \dot{\gamma})\) is the angular momentum. Note that since the left hand side of \eqref{theteq} is positive, it follows that the carter constant $\mathcal{K}$ is non-negative. 

In order to find all trapped null geodesics, the investigation naturally starts with equation \eqref{req}.
The crucial observation is that a simple zero of \(R(r)\) corresponds to a turning point (in the \(r\)-coordinate) of the geodesic, while a double zero corresponds to an orbit of constant \(r\) (or to asymptotic approach).\footnote{Cf.\ Proposition 4.3.7 and Corollary 4.3.8  in Chapter 4 of \cite{ONeillKerr}} It follows that a necessary condition for a null geodesic being trapped is that the constants of motion $\mathcal{K}$, $L$, and $E$ can be chosen in such a way that either $R(r)$ has a double zero in $(r_+,\infty)$ or $R(r)$ has two simple zeros in $(r_+,\infty)$ and is non-negative in between. In the following we show that the latter case cannot occur.

We distinguish the two cases $E=0$ and $E\neq 0$. In the first case $R(r)$ is a polynomial of order two with $R(r) \to -\infty$ for $r \to \infty$ (recall that $\mathcal{K} \geq 0$). Moreover, $R(r)$ is non-negative in $[r_-,r_+]$. This shows that $R(r)$ can have at most one real root in $(r_+,\infty)$.

In the case $E \neq 0$, $R(r)$ is a polynomial of order four. Over the complex numbers, we can write $R(r)$ as
\begin{equation*}
R(r) = E^2\cdot (r - \lambda_1)(r - \lambda_2)(r - \lambda_3)(r-\lambda_4) = E^2 \cdot r^4 - E^2 (\lambda_1 + \lambda_2 + \lambda_3 + \lambda_4) \cdot r^3 + \ldots \;,
\end{equation*}
where $\lambda_i \in \C$, $i \in \{1,2,3,4\}$, are the complex roots of $R(r)$. Since $R(r)$ does not have a term of order three, we see that the sum of the complex roots of $R(r)$ has to yield zero. This directly excludes $R(r)$ having four positive zeros. We also note that $R(r)$ tends to  $\infty$ for $r \to \infty$; hence for $R(r)$ to have two simple zeros in $(r_+,\infty)$ and to be non-negative in between, we see that $R(r)$ has to have at least three zeros in $(r_+,\infty)$. But since $\mathcal{K} \geq 0$, we see that $R(r)$ is non-negative in $[r_-,r_+]$; i.e., if $R(r)$ has three zeros in $(r_+,\infty)$, then it needs to have a fourth  positive zero, which we have already ruled out. This shows that trapping can only occur due to a double zero of $R(r)$.

We now sketch how one finds the values of $r$ that accommodate trapped null geodesics (along with the constants of motion $\mathcal{K}$, $L$ and $E$). A detailed discussion is found in Section 63 (c) of \cite{ChandBlackHole}. 

Without loss of generality we can assume that \(E=1\). We then need to solve
\begin{equation*}
\begin{split}
R(r) &= -\mathcal{K}(r^2 - 2mr + a^2) + (r^2 +a^2 -La)^2 =0 \\[2pt]
\frac{d}{dr} R(r) &= 2\mathcal{K} (m-r) + 4r(r^2 + a^2 -La) =0 \;.
\end{split}
\end{equation*}
Eliminating $\mathcal{K}$, we obtain the two solutions $L_1(r) = \frac{r^2 + a^2}{a}$ and $L_2(r) = \frac{r^3 + ra^2 -3mr^2 + ma^2}{a(m-r)}$. In the first case we obtain $\mathcal{K}_1(r) =0$, which characterises the principal null geodesics (cf.\ Corollary 4.2.8 in \cite{ONeillKerr}) and is thus not compatible with orbits of constant $r$. We are thus left with the second solution $L_2(r)$, which implies $\mathcal{K}_2(r) = \frac{4r^2}{(m-r)^2} \Delta$. For the further analysis it is helpful to introduce the quantity $\mathcal{Q} = \mathcal{K} - (L-a)^2$, since it simplifies the analysis of the $\theta$-motion of the geodesic. We obtain 
\begin{equation*}
\mathcal{Q}_2(r) = \frac{r^3}{a^2(m-r)^2} \big(4a^2m - r(r-3m)^2\big)\;.
\end{equation*}
It can now be shown (cf.\ Section 63 (c) of \cite{ChandBlackHole}) that if we evaluate the right hand side of \eqref{theteq} at $L_2(r)$ and $\mathcal{K}_2(r)$, where $r$ is such that  $\mathcal{Q}_2(r) <0$, then we see that it is negative for all values of $\theta$. Hence, these values of $r$ do not accommodate trapped null geodesics. However, one can show that the values of $r$ where $\mathcal{Q}_2(r) \geq 0$ indeed allow the presence of trapped null geodesics. This region is bounded by the roots \(r_\delta\) and \(r_\rho\) of $\mathcal{Q}_2(r)$ which are bigger than $r_+$.

We now show that the \(N\)-energy of a trapped null geodesic \(\gamma_{r_0}\), trapped on the hypersurface \(\{r=r_0\}\) with \(r_0 \in [r_\delta, r_\rho]\), is bounded away from zero and infinity. One way to do this is to compute the \(N\)-energy directly:
\begin{equation*}
-(N,\dot{\gamma}) = (dt + dr^* -dr)(\dot{\gamma}) = \dot{t} = \frac{1}{\rho^2} \big[a\Db(\theta) + (r_0^2 + a^2)\frac{\Pb(r_0)}{\Delta(r_0)}\big]
\end{equation*}
where we have used equation \eqref{teq}. A further analysis of the behaviour of the \(\theta\) component of \(\gamma_{r_0}\) yields that its image is a closed subset of \([0,\pi]\), thus \(-(N,\dot{\gamma})(\theta)\) takes on its minimum and maximum. Since \(-(N,\dot{\gamma})\) is always strictly positive, this immediately yields that it is  bounded away from zero and infinity.

Invoking Theorem \ref{LED} we thus obtain

\begin{theorem}[Trapping in (sub)-extremal Kerr]
\label{TrapKerr}
Let \((M,g)\) be the domain of outer communications of a (sub)-extremal Kerr spacetime, foliated by the level sets of a time function \(t^*\) as above. Moreover, let \(N\) be the timelike vector field from above and \(\mathcal{T}\) an open set with the property that for all \(\tau \geq 0\) we have \(\mathcal{T} \cap \Sigma_\tau \cap [r_\delta, r_\rho] \neq \emptyset\). Then there is no function \(P:[0,\infty) \to (0,\infty)\) with \(P(\tau)\to 0\) for \(\tau \to \infty\) such that 
\begin{equation*}
E^N_{\tau, \mathcal{T}\cap \st}(u) \leq P(\tau) E^N_0(u)
\end{equation*}
holds for all solutions \(u\) of the wave equation.
\end{theorem}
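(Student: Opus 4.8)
The plan is to derive Theorem \ref{TrapKerr} from Theorem \ref{LED} by exhibiting a \emph{single} trapped null geodesic that lies in $\mathcal{T}$ and whose $N$-energy neither decays nor blows up. I would take the globally hyperbolic Lorentzian manifold in Theorem \ref{LED} to be the domain of dependence $\D(\so)$ of the Cauchy slice $\so$, the time function to be (the restriction of) $t^*$, the foliation to be $\st = \{t^*=\tau\}$, and the energy-measuring field to be $N = -(dt^*)^\sharp$, exactly as set up above. The decisive structural point, in contrast to the horizon generators of extremal Reissner-Nordstr\"om in Section \ref{extremal}, is that the trapped null geodesics of Kerr sit at radii $r_0 \in [r_\delta,r_\rho]$ with $r_0 > r_+$, hence genuinely inside $\D(\so)$, so that Theorem \ref{LED} may be applied to an \emph{actual} geodesic rather than to a limiting one.

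The geometric input is supplied by the analysis preceding the theorem. For every $r_0 \in [r_\delta,r_\rho]$ there is a trapped null geodesic $\gamma_{r_0}$ with $r \equiv r_0$; after reparametrising so that $\gamma_{r_0}(0) \in \so$, the relation $t^* = v_+ - r_0$ together with $t$ strictly increasing shows that $\gamma_{r_0}$ meets each slice $\st$ in exactly one point and is future complete in $t^*$, so that $\tau^* = \infty$. Its $N$-energy was computed to be $-g(N,\dot{\gamma}_{r_0}) = \tfrac{1}{\rho^2}\big[a\Db(\theta) + (r_0^2+a^2)\tfrac{\Pb(r_0)}{\Delta(r_0)}\big]$, and since the $\theta$-component of $\gamma_{r_0}$ ranges over a closed subset of $[0,\pi]$ this strictly positive quantity attains a positive minimum; hence there is a constant $c_0 > 0$ with $-g(N,\dot{\gamma}_{r_0})\big|_{\I(\gamma_{r_0})\cap\st} \geq c_0$ for all $\tau$.

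Granting a trapped geodesic $\gamma = \gamma_{r_0}$ whose image lies in $\mathcal{T}$, the conclusion is then immediate: because $P(\tau)\to 0$ while $-g(N,\dot{\gamma})\big|_{\I(\gamma)\cap\st} \geq c_0 > 0$, there can be no constant $C>0$ with $-g(N,\dot{\gamma})\big|_{\I(\gamma)\cap\st} \leq P(\tau)\,C$ for all $0 \leq \tau < \tau^* = \infty$. Theorem \ref{LED}, applied with this $\gamma$ and the data fixed above, then yields that no constant $C>0$ can satisfy $E^N_{\tau,\mathcal{T}\cap\st}(u) \leq P(\tau)\,C\,E^N_0(u)$ for all solutions $u$ of the wave equation, which is precisely the assertion.

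The one delicate point — the step I expect to be the real obstacle — is producing a trapped geodesic whose future image actually lies inside $\mathcal{T}$, using only the hypothesis $\mathcal{T}\cap\st\cap[r_\delta,r_\rho]\neq\emptyset$ for every $\tau$. When $\mathcal{T}$ is a radial shell containing the trapping radii (the case relevant to the local energy decay statements one wishes to contradict), every $\gamma_{r_0}$ with $r_0\in[r_\delta,r_\rho]$ already lies in $\mathcal{T}$ and one simply selects one of them. For a more general $\mathcal{T}$ I would not insist on trapping the whole geodesic: invoking the localised estimate of Remark \ref{rem} (the second part of Theorem \ref{symbiosis}, which needs only a small neighbourhood $\N \subseteq \mathcal{T}$ of $\gamma$ and dispenses with \eqref{energyestimatecond}), it suffices, for each prospective constant $C_0$, to control the energy up to a \emph{single} large time $\tau_0$ chosen with $P(\tau_0)\,C_0\sup_\tau(-g(N,\dot{\gamma}))$ small. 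I would then run the sequence-and-contradiction argument exactly as in the extremal Reissner-Nordstr\"om case of Section \ref{extremal}, using smooth dependence of geodesics on their initial data and the density of the trapped set in $\{r\in[r_\delta,r_\rho]\}$ to select trapped geodesics whose images up to time $\tau_0$ stay in $\mathcal{T}$, and applying Theorem \ref{symbiosis} directly. Pinning down that such geodesics can genuinely be kept inside $\mathcal{T}$ — and thereby fixing the precise force of the hypothesis on $\mathcal{T}$ — is the part of the argument that requires the most care.
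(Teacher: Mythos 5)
Your proposal is correct and follows essentially the same route as the paper: the paper's entire proof consists of the geometric analysis you reproduce --- the trapped null geodesics \(\gamma_{r_0}\), \(r_0 \in [r_\delta,r_\rho]\), whose \(N\)-energy \(-g(N,\dot{\gamma}_{r_0})\) is shown via equation \eqref{teq} and compactness of the \(\theta\)-range to be bounded away from zero and infinity --- followed by the single line ``Invoking Theorem \ref{LED} we thus obtain'' the statement. If anything, your closing paragraph is more careful than the paper, which passes silently from the hypothesis \(\mathcal{T}\cap\Sigma_\tau\cap\{r\in[r_\delta,r_\rho]\}\neq\emptyset\) to Theorem \ref{LED}'s requirement that some trapped geodesic be completely contained in \(\mathcal{T}\).
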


Note that the same remark as made in footnote \ref{suitable} on page \pageref{suitable} applies, i.e., the theorem remains true if we choose a different timelike vector field \(N\) which commutes with the Killing vector field \(\partial_t\) and also if we choose a different foliation by timelike slices, i.e., a different time function\footnote{In the latter case one may have to alter the decay statement for the function \(P\), i.e., replace it with \(P(\tau) \to 0\) for \(\tau \to \tau^*\).}.

Another way to show that the energy of the trapped null geodesic \(\gamma_{r_0}\) is bounded away from zero and infinity is to choose a different suitable vector field \(N\). Recall that the vector fields \(\partial_t\) and \(\partial_\varphi\) are Killing, and that at each point in the domain of outer communications they also span a timelike direction. We can thus find a timelike vector field \(\tilde{N}\) that commutes with \(\partial_t\) and such that in a small \(r\)-neighbourhood of \(r_0\) the vector field \(\tilde{N}\) is given by \(\partial_t + k\partial_\varphi\) with \(k \in \R\) a constant. Thus, \(\tilde{N}\) is Killing in this small \(r\)-neighbourhood and hence the \(\tilde{N}\)-energy of \(\gamma_{r_0}\) is constant.

\subsubsection{Blue-shift near the Cauchy horizon of (sub)-extremal Kerr}
In this section we show that the results of Section \ref{sub-extremal} and \ref{BlueExtreme} also hold for (sub)-extremal Kerr. The proof is completely analogous: In the above defined \((v_+, r, \theta, \varphi_+)\) coordinates a family of ingoing null geodesics with uniformly bounded energy on \(\so\) near spacelike infinity \(\iota^0\) is given by
\begin{equation*}
\gamma_{v_+^0}(s) = ( v_+^0 , -s , \theta_0 , \varphi_0  ) \;,
\end{equation*}
where \(s\in (-\infty,0)\). The same pictures as in Sections \ref{sub-extremal} and \ref{BlueExtreme} apply, along with the same spacelike hypersurfaces \(\Sigma_0\) and \(\Sigma_1\). In order to obtain regular coordinates in a neighbourhood of the Cauchy horizon, we define, starting with \((t,r,\theta,\varphi)\) coordinates in region \(II\), outgoing `null' coordinates \((v_-, r, \theta, \varphi_-)\) by \(v_- = t - r^*\) and \(\varphi_- = \varphi - \bar{r}\). These coordinates cover the regions \(II\) and \(IV\) in the sub-extremal case and regions \(II\) and \(III\) in the extremal case. In these coordinates, the tangent vector of the null geodesic \(\gamma_{v_+^0}\) takes the form
\begin{equation}
\label{cc}
\dot{\gamma}_{v_+^0} = -\frac{\partial}{\partial r}\Big|_+ = 2\frac{r^2 + a^2}{\Delta} \frac{\partial}{\partial v_-}\Big|_- - \frac{\partial}{\partial r} \Big|_- + 2 \frac{a}{\Delta} \frac{\partial}{\partial \varphi_-}\Big|_- \;,
\end{equation}
which blows up at the Cauchy horizon. It is again easy to see that the inner product with a timelike vector field, which extends smoothly to a timelike vector field over the Cauchy horizon, necessarily blows up along \(\Sigma_1\) for \(v_+^0 \to \infty\). Thus, we obtain, after invoking Theorem \ref{symbiosis},

\begin{theorem}[Blue-shift near the Cauchy horizon in sub-extremal Kerr]
\label{Blue}
Let \(\so\) and \(\Sigma_1\) be spacelike slices in the sub-extremal Kerr spacetime as indicated in the second diagram in Section \ref{sub-extremal}. Then there exists a sequence \(\{u_i\}_{i\in \mathbb{N}}\) of solutions to the wave equation with initial energy \(E^{n_{\so}}_0(u_i) =1\) on \(\so\) such that the \(n_{\Sigma_1}\)-energy on \(\Sigma_1\) goes to infinty, i.e., \(E^{n_{\Sigma_1}}_1(u_i) \to \infty\) for \(i \to \infty\).

In particular, there is no energy boundedness statement of the form \eqref{energyboundimp}.
\end{theorem}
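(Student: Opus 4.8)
The plan is to mirror the argument of Theorem \ref{Blue1} for sub-extremal Reissner-Nordström, replacing the explicit formula \eqref{comp} by the analysis of \eqref{cc}. First I would fix the underlying globally hyperbolic manifold to be the domain of dependence $\mathcal{D}(\so)$, choose a time function having $\so$ and $\Sigma_1$ as level sets (so $\Sigma_1$ is reached at parameter value $1$), and pick a future directed timelike vector field $N$ on $\mathcal{D}(\so)$ that restricts to $n_{\so}$ on $\so$, to $n_{\Sigma_1}$ on $\Sigma_1$, and — crucially — extends smoothly to a regular timelike field across the Cauchy horizon $\{r=r_-\}$. Such an $N$ exists because $\Sigma_1$ is extendible as a smooth spacelike slice into the neighbouring regions, so $n_{\Sigma_1}$ is itself regular there. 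With this choice the $N$-energy on $\so$ coincides with the $n_{\so}$-energy and the $N$-energy on $\Sigma_1$ with the $n_{\Sigma_1}$-energy, so everything reduces to controlling $-g(N,\dot\gamma_{v_+^0})$ along the two slices.

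Next I would carry out the geometric heart of the argument, after reparametrising the ingoing geodesics $\gamma_{v_+^0}$ so that $\gamma_{v_+^0}(0)\in\so$. Along $\so$, which asymptotes to $\{t=\mathrm{const}\}$ near $\iota^0$, the intersection points sit at large $r$ in the asymptotically flat region, so $-g(N,\dot\gamma_{v_+^0})\big|_{\I(\gamma_{v_+^0})\cap\so}$ stays bounded above and below by positive constants uniformly in $v_+^0$. Along $\Sigma_1$ the intersection point instead approaches $\{r=r_-\}$ as $v_+^0\to\infty$, and formula \eqref{cc} shows that the $v_-$- and $\varphi_-$-components of $\dot\gamma_{v_+^0}$ scale like $(r^2+a^2)/\Delta$ and $a/\Delta$, which diverge as $\Delta\to 0$. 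Since $N$ is a fixed regular timelike field there, $-g(N,\dot\gamma_{v_+^0})\big|_{\I(\gamma_{v_+^0})\cap\Sigma_1}\to\infty$; this is exactly the infinite blue-shift, in analogy with \eqref{comp}.

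With this in hand I would apply Theorem \ref{symbiosis} to the sequence $v_+^0=i\to\infty$, taking $T=1$, a fixed $\mu$ (say $\mu=1$), and a neighbourhood $\N_i$ of $\I(\gamma_{v_+^0})$ small enough that the \emph{local} estimate \eqref{energyinside} holds \emph{without} reference to the condition \eqref{energyestimatecond2}. This localisation is essential, since near the Cauchy horizon no global energy estimate of the form \eqref{energyestimate} is available — indeed its failure is precisely the content of \eqref{energyboundimp}. The theorem yields a solution $v_i$ with $E^N_0(v_i)=-g(N,\dot\gamma_{v_+^0})\big|_{\gamma(0)}$ exactly, and with $E^N_{1,\N_i\cap\Sigma_1}(v_i)\geq -g(N,\dot\gamma_{v_+^0})\big|_{\I(\gamma_{v_+^0})\cap\Sigma_1}-\mu$. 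Rescaling by linearity, $u_i:=v_i/\sqrt{E^N_0(v_i)}$ satisfies $E^{n_{\so}}_0(u_i)=1$, while, using $N=n_{\Sigma_1}$ on $\Sigma_1$ together with the non-negativity of the $N$-energy density, $E^{n_{\Sigma_1}}_1(u_i)\geq E^N_{1,\N_i\cap\Sigma_1}(u_i)\geq\big(-g(N,\dot\gamma_{v_+^0})\big|_{\Sigma_1}-\mu\big)/\big(-g(N,\dot\gamma_{v_+^0})\big|_{\so}\big)$. Since the numerator diverges and the denominator stays bounded, $E^{n_{\Sigma_1}}_1(u_i)\to\infty$. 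The non-existence of a bound \eqref{energyboundimp} is then immediate, since such a bound applied to the $u_i$ would force $E^{n_{\Sigma_1}}_1(u_i)\leq C$.

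The main obstacle is the geometric verification that the geodesic energy genuinely diverges along $\Sigma_1$ while remaining bounded and bounded away from zero on $\so$; this rests on \eqref{cc} together with the facts that $\Sigma_1$ reaches the Cauchy horizon region whereas $\so$ stays in the asymptotically flat end. The analytic construction is then a black-box application of Theorem \ref{symbiosis}, and the only point requiring care is to invoke its \emph{local} version in order to sidestep the unavailable global energy estimate.
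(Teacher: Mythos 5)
Your proposal is correct and follows essentially the same route as the paper: the same family of ingoing null geodesics $\gamma_{v_+^0}$, the same use of \eqref{cc} to show the geodesic energy blows up along $\Sigma_1$ while staying uniformly bounded on $\so$, and the same application of Theorem \ref{symbiosis} via the \emph{local} estimate \eqref{energyinside} that bypasses condition \eqref{energyestimatecond2}. In fact you spell out several steps the paper leaves implicit (the proof there is stated as ``completely analogous'' to the Reissner--Nordstr\"om case), notably the rescaling to unit initial energy and the use of the pointwise non-negativity of $J^N(u)\cdot n_{\Sigma_1}$ to pass from the localised energy to the full energy on $\Sigma_1$.
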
 

As before, let us state the following

\begin{conjecture}
\label{Con2}
For generic compactly supported smooth initial data on \(\Sigma_0\), the \(n_{\Sigma_1}\)-energy along \(\Sigma_1\) of the corresponding solution to the wave equation is infinite.
\end{conjecture}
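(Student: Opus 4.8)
The plan is to reduce Conjecture \ref{Con2} to a statement purely about the behaviour of the solution along the event horizon, following the analysis of Dafermos in \cite{Daf05a} that was already invoked after Conjecture \ref{Con}. The heuristic made precise in Section \ref{Dis} is that the \(n_{\Sigma_1}\)-energy near the Cauchy horizon is produced by the exponential blue-shift acting on radiation that crosses the event horizon \(\mathcal{H}^+\) at late advanced times \(v\). The first step is therefore to propagate an \emph{actual} solution \(u\) -- not a Gaussian beam -- through the interior region \(II\) and to bound \(E^{n_{\Sigma_1}}_1(u)\), both above and below, by a blue-shift-weighted integral of the horizon flux, schematically \(\int_{v_0}^{\infty} w(v)\,\big|\partial_v(r\phi_0)\big|^2\,dv\), where \(\phi_0\) denotes the spherical mean (the projection of \(u\) onto the lowest relevant angular frequency) and \(w(v)\) is the blue-shift weight, which grows exponentially in \(v\) as computed in Section \ref{sub-extremal} for the sub-extremal case. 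The competition is then between the exponentially growing \(w\) and the late-time decay of \(\phi_0\) along \(\mathcal{H}^+\).

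Having set up this competition, the problem reduces to lower-bounding the late-time behaviour of \(\phi_0\) on \(\mathcal{H}^+\): one needs to show that for generic compactly supported data the spherical mean on the horizon decays \emph{no faster} than a fixed polynomial rate, say \(\phi_0 \sim c\,v^{-p}\) with \(c \neq 0\). This is exactly the reduction alluded to in the remark following Conjecture \ref{Con}, now applied in the Kerr setting. The way I would attempt it is to isolate the leading-order tail coefficient of \(\phi_0\) as a nontrivial linear functional \(\ell\) on the space of compactly supported data -- a quantity of Newman--Penrose type, conserved or almost conserved along the evolution -- and to establish a sharp asymptotic \(\phi_0(v) = \ell(\text{data})\,v^{-p} + o(v^{-p})\). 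For any data with \(\ell \neq 0\) the weighted integral above then diverges, since \(\int^{\infty} w(v)\,v^{-2p-2}\,dv = \infty\) once \(w\) grows exponentially, and hence \(E^{n_{\Sigma_1}}_1(u) = \infty\).

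The genericity statement follows formally once the sharp asymptotic is in hand: the set of ``good'' data for which the energy at \(\Sigma_1\) is finite is contained in the kernel \(\{\ell = 0\}\), a proper closed linear subspace (of infinite codimension, once the entire hierarchy of subleading tail coefficients is taken into account) of the space of compactly supported smooth data on \(\so\). Any of the standard notions of genericity -- residual in the Baire sense, or complement of a measure-theoretically negligible subspace -- then excludes these data.

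The main obstacle is the generic \emph{lower} bound on the tail, i.e.\ the sharpness of the polynomial decay of the spherical mean on the horizon together with the nondegeneracy of the functional \(\ell\). This is genuinely hard analysis and is the reason the statement is only a conjecture: one must show not merely that the known upper-bound decay rate is attained, but that no oscillation or interference accelerates the decay for generic data -- a difficulty that is especially acute in Kerr, where the angular modes couple through the rotation and the wave equation separates only after a frequency-space transform. A secondary difficulty is to secure the \emph{lower} bound in the transport estimate of the first step (not merely an upper bound), ruling out destructive interference in region \(II\) that could keep \(E^{n_{\Sigma_1}}_1(u)\) finite despite a slowly decaying horizon flux. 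Both difficulties lie well beyond the Gaussian-beam technology developed in Part I, which only yields the weaker \emph{sequential} statement of Theorem \ref{Blue}.
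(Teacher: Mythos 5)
The first thing to note is that the paper does not prove this statement: Conjecture \ref{Con2} is stated as a conjecture, exactly like its Reissner--Nordstr\"om counterpart Conjecture \ref{Con}, and the only rigorous result in its vicinity is the \emph{sequential} blow-up statement of Theorem \ref{Blue}, obtained from the Gaussian-beam machinery of Part I. So there is no paper proof to compare yours against; the honest comparison is with the paper's heuristics, and on that score your proposal tracks them closely. The reduction of the conjecture to a generic lower bound on the late-time decay of the solution along the event horizon, with the exponential blue-shift weight (of the type \(e^{-\kappa_- v}\) computed in Section \ref{sub-extremal}) beating any polynomial tail, is precisely the reduction the paper attributes, in the remark following Conjecture \ref{Con}, to the analysis of \cite{Daf05a}; and your ``competition between exponential weight and polynomial decay'' is the content of Section \ref{Dis}, including the reason the conjecture is made only in the sub-extremal case (in the extremal case the blow-up rate is merely \(v^2\) and loses to the decay of the ingoing energy). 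Your identification of the two open steps --- the generic sharp lower bound on the tail together with nondegeneracy of the Newman--Penrose-type functional \(\ell\), and the lower-bound transport estimate through region \(II\) ruling out destructive interference --- is also exactly why the statement remains a conjecture rather than a theorem, as you yourself say.

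Two Kerr-specific cautions, since this is where your writeup is thinner than it looks. First, the reduction via the spherical mean is made by the paper only for Reissner--Nordstr\"om, where spherical symmetry renders \(\phi_0\) a decoupled \(1+1\)-dimensional quantity to which the characteristic interior analysis of \cite{Daf05a} applies; in Kerr there is no invariant spherical mean, the angular modes couple through the rotation, and no analogue of that interior analysis is supplied by the paper. Hence even the schematic two-sided estimate of your first step, bounding \(E^{n_{\Sigma_1}}_1(u)\) by a weighted horizon flux, is an additional unproven ingredient in Kerr and not a routine adaptation --- the paper's claim that ``the discussion in Section \ref{Dis} carries over'' concerns only the heuristics, not any estimate. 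Second, your genericity argument is conditional twice over: the inclusion of the ``good'' set in \(\{\ell = 0\}\) requires both the sharp asymptotic for \emph{all} compactly supported data and the lower-bound direction of the interior estimate; absent either, the set of data with finite energy on \(\Sigma_1\) is not obviously contained in a proper closed subspace, and genericity would have to be argued from the divergence criterion directly. None of this makes your approach wrong --- it is the expected route, and it faithfully reproduces the paper's own reasoning --- but as a proof it is a roadmap whose every nontrivial waypoint is open, which matches the paper's own assessment in leaving the statement as Conjecture \ref{Con2}.
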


Let us conclude this section with a couple of remarks:
\begin{enumerate}[i)]
\item
Obviously, an analogous statement to Theorem \ref{Blue} is true for extremal Kerr, however, one has to introduce again a suitable globally hyperbolic subset in order to be able to apply Theorem \ref{symbiosis}. 
\item
The discussion in Section \ref{Dis} carries over to the Kerr case. In particular let us stress that Conjecture \ref{Con2} only concerns \emph{sub}-extremal Kerr black holes - the same statement for extremal Kerr black holes is expected to be false. However, as for Reissner-Nordstr\"om black holes, we conjecture a \(C^k\) instability (for some finite \(k\)) at the Cauchy horizon of extremal Kerr black holes. 
\item
We leave it as an exercise for the reader to convince him- or herself that analogous versions of the Theorems \ref{TrapKerr} and \ref{Blue} also hold true for the Kerr-Newman family.
\end{enumerate}

\section*{Acknowledgements}
I would like to thank my supervisor Mihalis Dafermos for numerous instructive and stimulating discussions. Moreover, I am grateful to Mihalis Dafermos, Gustav Holzegel and Stefanos Aretakis for many useful comments on a preliminary version of this paper. Furthermore, I would like to thank the Science and Technology Facilities Council (STFC), the European Research Council (ERC), and the German Academic Exchange Service (DAAD) (Doktorandenstipendium) for their financial support.
\vspace{1.5cm}

\section*{Appendix}

The purpose of the appendix is to compare the construction of localised solutions to the wave equation using the Gaussian beam approximation with the older method which employs the `naive' geometric optics approximation. In particular, we discuss Ralston's paper \cite{Ral69} from 1969, where he proves that trapping in the obstacle problem necessarily leads to a loss of derivative in a uniform LED statement.

\appendix

\section{A sketch of the construction of localised solutions to the wave equation using the geometric optics approximation}
\label{geomop}

In this short section we outline how one can construct localised solutions to the wave equation with the help of the `naive' geometric optics approximation. Although this approach is simpler than the Gaussian beam approximation we have presented, it alone is not strong enough to prove Theorem \ref{localised}, since the naive geometric optics approximation, in contrast to the Gaussian beam approximation, breaks down at caustics. However, as mentioned in Section \ref{parsimonious} one can extend the naive geometric optics approximation through caustics using Maslov's canonical operator.

As already mentioned at the beginning of Section \ref{GaussianBeams}, the naive geometric optics approximation also considers approximate solutions of the wave equation that are of the form \(u_\lambda = a \cdot e^{i\lambda \phi}\). But, here it suffices to consider real valued functions \(a\) and \(\phi\). Also recall that we can satisfy \eqref{firstcond}, i.e.,
\(||\Box u_\lambda||_{L^2(R_{[0,T]})} \leq C\), if we require
\begin{align}
d\phi \cdot d\phi &= 0 \qquad \qquad \textnormal{ (eikonal equation)} \label{eikonal}\\
2\grad \,\phi(a) + \Box \phi \cdot a &= 0  \qquad \qquad \textnormal{ (transport equation)}\;. \label{transport}
\end{align}
Recall that one can solve the eikonal equation \(H(x,p) = H(x,d\phi) = \frac{1}{2} g^{-1}(x)(d\phi,d\phi) =0\) using the method of characteristics. The characteristic equations are
\begin{equation}
\label{characteristics}
\begin{aligned}
\dot{x}^\mu &= \frac{\partial H}{\partial p_\mu} = g^{\mu \nu}p_\nu \\
\dot{p}_\nu &= -\frac{\partial H}{\partial x^\nu} = -\frac{1}{2} \big(\frac{\partial}{\partial x^\nu} g^{\mu \kappa}\big) p_\mu p_\kappa  \;.
\end{aligned}
\end{equation}
Given initial data \(\phi\big|_{\so}\) we choose \(\no \phi\) such that \(d\phi \cdot d\phi =0\) is satisfied on \(\so\). Moreover, we assume that \(\grad\,\phi\) is transversal to \(\so\). Then the integral curves of \eqref{characteristics} sweep out a \(4\)-dimensional submanifold of \(T^*M\) - and one can show that it is Lagrangian, i.e., it is locally the graph of a function \(\phi\) which solves the eikonal equation. This ensures that a solution \(\phi\) of \eqref{eikonal} exists locally. In order to understand the obstruction for a global solution to exist, first note that \eqref{characteristics} are just the equations for the geodesic flow in the cotangent bundle. In particular, the projections of the integral curves of \eqref{characteristics} to \(M\) are geodesics \(\gamma\) with tangent vector \(\dot{\gamma} = \grad \,\phi\). Moreover, using the eikonal equation, it follows that \(\phi\) is constant along those geodesics. Thus, if two of those geodesics cross (which is called a \emph{caustic}) the solution of the eikonal equation breaks down.

\begin{center}
\def\svgwidth{7cm}
%% Creator: Inkscape 0.48.2, www.inkscape.org
%% PDF/EPS/PS + LaTeX output extension by Johan Engelen, 2010
%% Accompanies image file '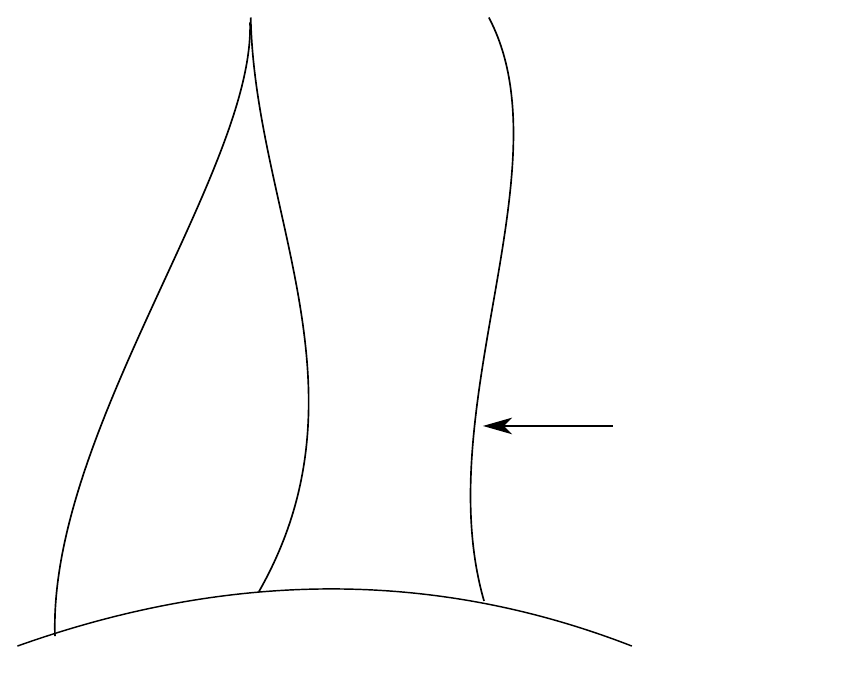' (pdf, eps, ps)
%%
%% To include the image in your LaTeX document, write
%%   \input{<filename>.pdf_tex}
%%  instead of
%%   \includegraphics{<filename>.pdf}
%% To scale the image, write
%%   \def\svgwidth{<desired width>}
%%   \input{<filename>.pdf_tex}
%%  instead of
%%   \includegraphics[width=<desired width>]{<filename>.pdf}
%%
%% Images with a different path to the parent latex file can
%% be accessed with the `import' package (which may need to be
%% installed) using
%%   \usepackage{import}
%% in the preamble, and then including the image with
%%   \import{<path to file>}{<filename>.pdf_tex}
%% Alternatively, one can specify
%%   \graphicspath{{<path to file>/}}
%% 
%% For more information, please see info/svg-inkscape on CTAN:
%%   http://tug.ctan.org/tex-archive/info/svg-inkscape
%%
\begingroup%
  \makeatletter%
  \providecommand\color[2][]{%
    \errmessage{(Inkscape) Color is used for the text in Inkscape, but the package 'color.sty' is not loaded}%
    \renewcommand\color[2][]{}%
  }%
  \providecommand\transparent[1]{%
    \errmessage{(Inkscape) Transparency is used (non-zero) for the text in Inkscape, but the package 'transparent.sty' is not loaded}%
    \renewcommand\transparent[1]{}%
  }%
  \providecommand\rotatebox[2]{#2}%
  \ifx\svgwidth\undefined%
    \setlength{\unitlength}{415.86606445bp}%
    \ifx\svgscale\undefined%
      \relax%
    \else%
      \setlength{\unitlength}{\unitlength * \real{\svgscale}}%
    \fi%
  \else%
    \setlength{\unitlength}{\svgwidth}%
  \fi%
  \global\let\svgwidth\undefined%
  \global\let\svgscale\undefined%
  \makeatother%
  \begin{picture}(1,0.78908597)%
    \put(0,0){\includegraphics[width=\unitlength]{Eikonal.pdf}}%
    \put(0.75687876,0.03381123){\color[rgb]{0,0,0}\makebox(0,0)[lb]{\smash{\(\Sigma_0\)}}}%
    \put(0.60013315,0.48267371){\color[rgb]{0,0,0}\makebox(0,0)[lb]{\smash{\(\gamma\)}}}%
    \put(0.72073343,0.28578597){\color[rgb]{0,0,0}\makebox(0,0)[lb]{\smash{\(\phi\) is constant along \(\gamma\)}}}%
    \put(0.01407581,0.40585296){\color[rgb]{0,0,0}\makebox(0,0)[lb]{\smash{\(\phi = 3\)}}}%
    \put(0.32608997,0.60995245){\color[rgb]{0,0,0}\makebox(0,0)[lb]{\smash{\(\phi =5\)}}}%
  \end{picture}%
\endgroup%

\end{center}

Let us now consider the transport equation. Since \(\dot{\gamma} = \grad\, \phi\), \(a\) is transported along the geodesics determined by \(\phi\). Hence, the solution of \eqref{transport} has the same domain of existence as the solution of \eqref{eikonal}, and thus we see that the geometric optics approximation only breaks down at caustics.

In the context of Theorem \ref{localised}, i.e., for the purpose of the construction of localised solutions to the wave equation, recall that given a neighbourhood \(\N\) of a certain geodesic \(\gamma\) and a finite time \(T >0\), we aim for a solution \(a\) of \eqref{transport} such that \(a\) is supported in \(\N\) up to time \(T\). Therefore, we first prescribe initial data \(\phi\big|_{\so}\), \(\no \phi\) such that this particular geodesic is one of the integral curves of \eqref{characteristics}. Let us assume that there are no caustics up to time \(T\), i.e., we obtain a solution \(\phi\) of \eqref{eikonal} in \(R_{[0,T]}\). Secondly, notice that if \(a\) is initially zero at some point on \(\so\), then it vanishes on the geodesic it is transported along. Thus, by continuity we can choose \(\supp(a|_{\so})\) so small, centred around the base point of \(\gamma\), such that \(a|_{\st}\) is supported in \(\N \cap \st\) up to time \(T\), i.e., \eqref{thirdcond} is satisfied, at least up to time \(T\).

\begin{center}
\def\svgwidth{7cm}
%% Creator: Inkscape 0.48.2, www.inkscape.org
%% PDF/EPS/PS + LaTeX output extension by Johan Engelen, 2010
%% Accompanies image file '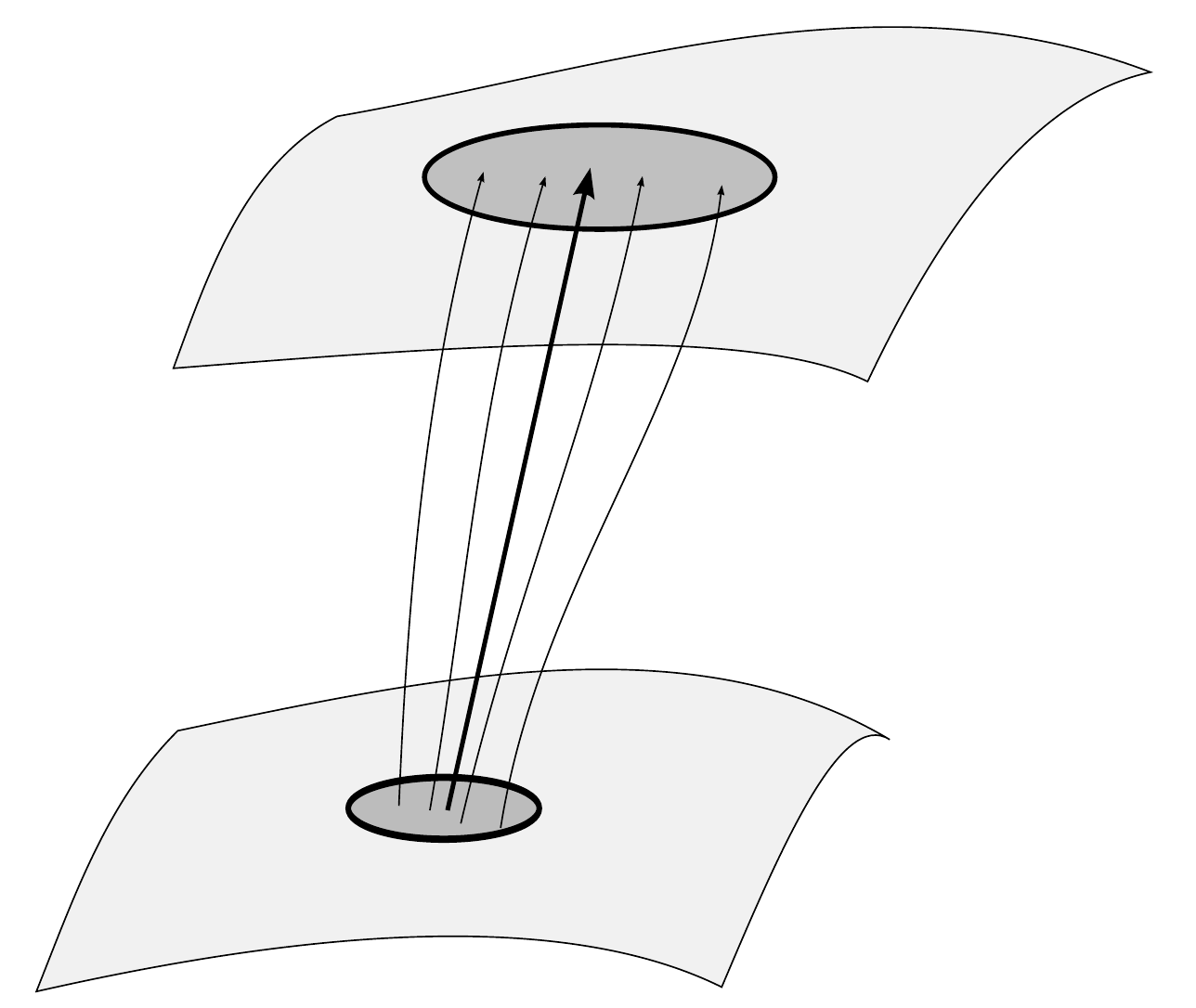' (pdf, eps, ps)
%%
%% To include the image in your LaTeX document, write
%%   \input{<filename>.pdf_tex}
%%  instead of
%%   \includegraphics{<filename>.pdf}
%% To scale the image, write
%%   \def\svgwidth{<desired width>}
%%   \input{<filename>.pdf_tex}
%%  instead of
%%   \includegraphics[width=<desired width>]{<filename>.pdf}
%%
%% Images with a different path to the parent latex file can
%% be accessed with the `import' package (which may need to be
%% installed) using
%%   \usepackage{import}
%% in the preamble, and then including the image with
%%   \import{<path to file>}{<filename>.pdf_tex}
%% Alternatively, one can specify
%%   \graphicspath{{<path to file>/}}
%% 
%% For more information, please see info/svg-inkscape on CTAN:
%%   http://tug.ctan.org/tex-archive/info/svg-inkscape
%%
\begingroup%
  \makeatletter%
  \providecommand\color[2][]{%
    \errmessage{(Inkscape) Color is used for the text in Inkscape, but the package 'color.sty' is not loaded}%
    \renewcommand\color[2][]{}%
  }%
  \providecommand\transparent[1]{%
    \errmessage{(Inkscape) Transparency is used (non-zero) for the text in Inkscape, but the package 'transparent.sty' is not loaded}%
    \renewcommand\transparent[1]{}%
  }%
  \providecommand\rotatebox[2]{#2}%
  \ifx\svgwidth\undefined%
    \setlength{\unitlength}{618.00424805bp}%
    \ifx\svgscale\undefined%
      \relax%
    \else%
      \setlength{\unitlength}{\unitlength * \real{\svgscale}}%
    \fi%
  \else%
    \setlength{\unitlength}{\svgwidth}%
  \fi%
  \global\let\svgwidth\undefined%
  \global\let\svgscale\undefined%
  \makeatother%
  \begin{picture}(1,0.84328847)%
    \put(0,0){\includegraphics[width=\unitlength]{Transport.pdf}}%
    \put(0.00906143,0.11510525){\color[rgb]{0,0,0}\makebox(0,0)[lb]{\smash{\(\Sigma_0\)}}}%
    \put(0.29274742,0.79264663){\color[rgb]{0,0,0}\makebox(0,0)[lb]{\smash{\(\Sigma_T\)}}}%
    \put(0.4690183,0.15641878){\color[rgb]{0,0,0}\makebox(0,0)[lb]{\smash{\(\supp(a)\cap\Sigma_0\)}}}%
    \put(0.66181463,0.67972306){\color[rgb]{0,0,0}\makebox(0,0)[lb]{\smash{\(\supp(a)\cap\Sigma_T\)}}}%
    \put(0.45524717,0.4731556){\color[rgb]{0,0,0}\makebox(0,0)[lb]{\smash{\(\gamma\)}}}%
  \end{picture}%
\endgroup%

\end{center}
Finally, we notice that the initial energy of \(u_\lambda\) grows like \(\lambda^2\), i.e., \eqref{secondcond} is satisfied as well. This finishes then the construction of the approximate solution and one can now prove Theorem \ref{localised} \emph{under the additional assumption that no caustics form up to time \(T\)} in the same way as before, using \eqref{firstcond}, \eqref{secondcond} and \eqref{thirdcond}. Note that, although we cannot prove Theorem \ref{localised} without any further assumptions by just using the geometric optics approximation, this construction is already sufficient for obtaining solutions to the wave equation with localised energy along light rays in the Minkowski spacetime for instance, since there we can avoid the formation of caustics by a suitable choice of initial data for \(\phi\). However, in general spacetimes one cannot exclude the possibility of the formation of caustics.

\section{Discussion of Ralston's proof that trapping forms an obstruction to LED in the obstacle problem}
\label{Comparison}

The \emph{obstacle problem} is the study of the wave equation \[-\frac{\partial^2}{\partial t^2}u + \Big( \frac{\partial^2}{\partial x_1^2} + \frac{\partial^2}{\partial x_2^2} + \frac{\partial^2}{\partial x_3^2} \Big)u =0\] on \(\R \times \D\) with Dirichlet boundary conditions on \(\R \times \partial \D\), where \(\D \subseteq \R^3\) is an open set with smooth boundary and bounded complement.

Let us define an \emph{admissible light ray} to be a straight line in \(\D\) that is nowhere tangent to \(\partial\D\) and that is reflected off the boundary \(\D\) by the classical laws of ray optics.
Moreover, let \(\ell_R\) denote the supremum of the lengths of all admissible light rays that are contained in \(B_R(0)\), where \(R>0\).

\begin{center}
\def\svgwidth{5cm}
%% Creator: Inkscape 0.48.2, www.inkscape.org
%% PDF/EPS/PS + LaTeX output extension by Johan Engelen, 2010
%% Accompanies image file '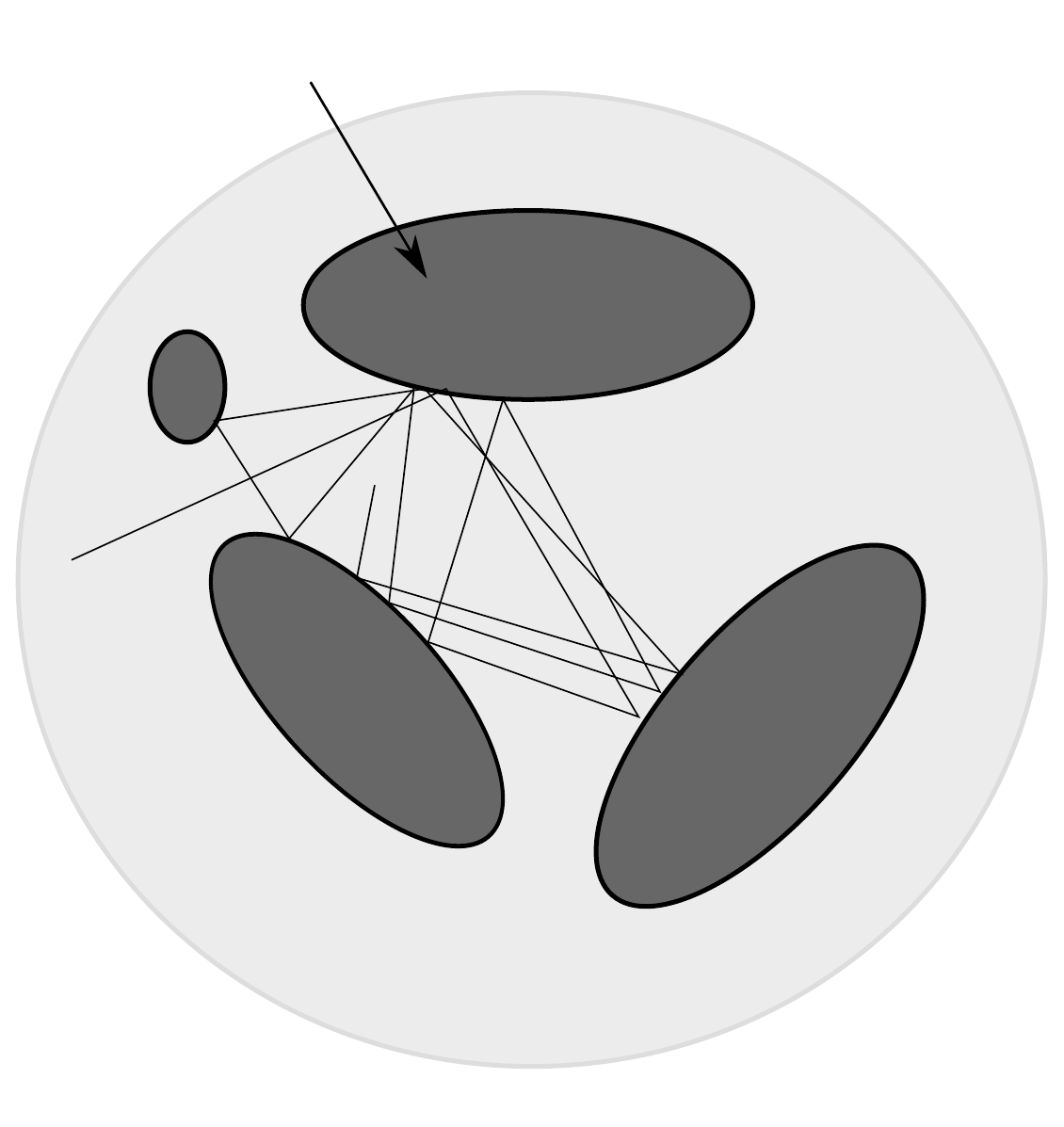' (pdf, eps, ps)
%%
%% To include the image in your LaTeX document, write
%%   \input{<filename>.pdf_tex}
%%  instead of
%%   \includegraphics{<filename>.pdf}
%% To scale the image, write
%%   \def\svgwidth{<desired width>}
%%   \input{<filename>.pdf_tex}
%%  instead of
%%   \includegraphics[width=<desired width>]{<filename>.pdf}
%%
%% Images with a different path to the parent latex file can
%% be accessed with the `import' package (which may need to be
%% installed) using
%%   \usepackage{import}
%% in the preamble, and then including the image with
%%   \import{<path to file>}{<filename>.pdf_tex}
%% Alternatively, one can specify
%%   \graphicspath{{<path to file>/}}
%% 
%% For more information, please see info/svg-inkscape on CTAN:
%%   http://tug.ctan.org/tex-archive/info/svg-inkscape
%%
\begingroup%
  \makeatletter%
  \providecommand\color[2][]{%
    \errmessage{(Inkscape) Color is used for the text in Inkscape, but the package 'color.sty' is not loaded}%
    \renewcommand\color[2][]{}%
  }%
  \providecommand\transparent[1]{%
    \errmessage{(Inkscape) Transparency is used (non-zero) for the text in Inkscape, but the package 'transparent.sty' is not loaded}%
    \renewcommand\transparent[1]{}%
  }%
  \providecommand\rotatebox[2]{#2}%
  \ifx\svgwidth\undefined%
    \setlength{\unitlength}{542.03637695bp}%
    \ifx\svgscale\undefined%
      \relax%
    \else%
      \setlength{\unitlength}{\unitlength * \real{\svgscale}}%
    \fi%
  \else%
    \setlength{\unitlength}{\svgwidth}%
  \fi%
  \global\let\svgwidth\undefined%
  \global\let\svgscale\undefined%
  \makeatother%
  \begin{picture}(1,1.05969802)%
    \put(0,0){\includegraphics[width=\unitlength]{Obstacle.pdf}}%
    \put(0.74151346,0.68732019){\color[rgb]{0,0,0}\makebox(0,0)[lb]{\smash{\(\D\)}}}%
    \put(0.70126125,0.03657558){\color[rgb]{0,0,0}\makebox(0,0)[lb]{\smash{\(B_R(0)\)}}}%
    \put(0.08070574,1.00933816){\color[rgb]{0,0,0}\makebox(0,0)[lb]{\smash{reflecting obstacle}}}%
  \end{picture}%
\endgroup%

\end{center}

In \cite{Ral69}, Ralston proved the following
\begin{theorem}
\label{Ralstonthm}
If \(\ell_R=\infty\), then there is no uniform decay of the energy contained in \(B_R(0)\) with respect to the initial energy, i.e., there is no constant \(C>0\) and no function \(P: [0,\infty) \to (0,\infty)\) with \(P(t) \to 0\) for \(t \to \infty\) such that
\[\int_{B_R(0)} |\nabla u|^2(t,x) + |\partial_t u |^2(t,x) \;dx \leq P(t) \cdot C \cdot \int_\D |\nabla u|^2(0,x) + |\partial_t u |^2(0,x) \;dx \]
holds for all solutions \(u\) of the wave equation that vanish on \(\R \times \partial \D\) and whose initial data (prescribed on \(\{t=0\}\)) is contained in \(B_\rho(0)\) for some large, but fixed \(\rho\).
\end{theorem}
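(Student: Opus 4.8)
The plan is to realise the obstacle problem as the wave equation on the Lorentzian manifold-with-boundary $\tilde M = \R \times \D$ equipped with the (flat) Minkowski metric, whose boundary $\R \times \partial \D$ is timelike and carries Dirichlet data, with time function $t$ and spacelike slices $\{t=\tau\}$. The vector field $N := \partial_t$ is globally timelike, Killing, and tangent to the boundary; since the Dirichlet condition annihilates the boundary flux, the $N$-energy is \emph{exactly conserved} by every admissible solution and coincides (up to a fixed constant) with the physical energy in the theorem. The whole force of the argument will therefore come from combining this conservation with the existence of solutions whose energy remains localised in $B_R(0)$ for an arbitrarily long time. First I would argue by contradiction: suppose there were $C>0$ and a function $P$ with $P(t)\to 0$ for which the stated decay estimate holds for all admissible solutions with initial data in a fixed $B_\rho(0)$, say $\rho=R$.

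Next I would exploit $\ell_R=\infty$ to choose, for each $n\in\mathbb N$, an admissible light ray $\gamma_n$ contained in $B_R(0)$ of length exceeding $n$, and let $T_n>n$ be the (light-speed) traversal time. Admissibility — nowhere tangent to $\partial\D$ — means that $\gamma_n$ lifts to a null geodesic in $\tilde M$ that reflects specularly and transversally off the timelike boundary. I would then run the construction of Section \ref{GaussianBeams} along $\gamma_n$, adapted to the initial-boundary value problem (cf.\ point 1 of Section \ref{parsimonious}): the beam is built along the straight segments exactly as before and continued across each reflection by superposing the incident beam with its specular reflection, so that the Dirichlet condition is met to the required order. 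This produces an approximate solution $\tilde u$ with $\supp(\tilde u)$ in a small neighbourhood $\N_n$ of $\gamma_n$; since the compact image of $\gamma_n$ lies at positive distance from $\partial B_R(0)$, one may take $\N_n\subseteq \R\times\big(B_R(0)\cap\D\big)$, and $\|\Box\tilde u\|_{L^2}$ is controlled up to time $T_n$. The boundary analogue of Theorem \ref{localised} then yields an \emph{actual} solution $v_n$, with initial data supported near $\gamma_n(0)\in B_R(0)\subseteq B_\rho(0)$ and normalised so that $E^N_0(v_n)=1$.

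Now I would use conservation of the $N$-energy together with localisation. As $N$ is Killing, condition \eqref{energyestimatecond} holds trivially, so the estimate \eqref{thmlocapp} gives $E^N_\tau(v_n-\tilde u)<\mu$ for $0\leq\tau\leq T_n$. Since $\tilde u$ vanishes outside $\N_n$, this bounds $E^N_{\tau,\N_n^c}(v_n)<\mu$; as the total energy $E^N_\tau(v_n)=1$ is conserved and $\N_n\subseteq\R\times\big(B_R(0)\cap\D\big)$, it follows that $E^N_{\tau,B_R(0)}(v_n)>1-\mu$ for all $0\leq\tau\leq T_n$. (Here the characterisation of Theorem \ref{main} is \emph{not} needed: as remarked in Section \ref{energymethodGB}, a globally timelike Killing field already renders Theorem \ref{localised} sufficient for such an obstruction.) Evaluating the assumed decay estimate at $\tau=T_n>n$ now gives $1-\mu<E^N_{T_n,B_R(0)}(v_n)\leq P(T_n)\,C$; letting $n\to\infty$ drives the right-hand side to $0$ while the left stays near $1$, the desired contradiction.

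The hard part will be none of the energy bookkeeping above but the extension of the Gaussian beam machinery from the globally hyperbolic setting \emph{without} boundary to the initial-boundary value problem \emph{with reflections}: one must define the reflected phase and amplitude data across the timelike boundary so that the Dirichlet condition is satisfied and the sum of incident and reflected beam remains an approximate solution, and one must verify that the reflection preserves both the localisation near $\gamma_n$ and the energy across the — for long rays, arbitrarily many — reflections, with constants controlled uniformly up to the finite time $T_n$. The decisive advantage over Ralston's original geometric-optics argument is precisely that Gaussian beams remain valid through the focusing that specular reflection off a \emph{curved} boundary can produce, so no caustic obstruction arises and the only genuine work is this reflection bookkeeping.
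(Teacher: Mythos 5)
Your energy bookkeeping and final contradiction coincide exactly with the paper's (i.e.\ Ralston's) closing step: since $\partial_t$ is Killing, tangent to $\R\times\partial\D$, and the Dirichlet condition kills the boundary flux, the energy of the true solution is exactly conserved, so a Theorem~\ref{localised}-type localisation statement alone (no need for Theorem~\ref{main}) contradicts uniform LED along a sequence of ever-longer trapped rays. Where you genuinely diverge is in how the localised approximate solution along an arbitrarily long admissible ray is produced. The paper, following \cite{Ral69}, uses geometric optics, for which specular reflection with exact Dirichlet matching is classical (the real eikonal of the reflected wave can be made to agree with the incident one on the boundary, and the reflected amplitude is just minus the incident one there), and then spends all of its effort on the one place where geometric optics fails: the caustics that focusing by the curved boundary inevitably produces. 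Ralston bridges each caustic by feeding the oscillatory data into the explicit free-space representation formula for the wave equation in $\R^{3+1}$ and applying stationary phase, so that the approximate solution is a patchwork of geometric optics pieces and free-space solutions. You propose instead to use Gaussian beams, which indeed pass through caustics unharmed.

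With that replacement, however, the entire difficulty migrates into precisely the step you defer, and this is a genuine gap: there is no ``boundary analogue of Theorem~\ref{localised}'' to invoke. Theorem~\ref{localised} and its proof (slice coordinates along an embedded geodesic, the Riccati/Jacobi construction of Section~\ref{GaussianBeams}, the estimate \eqref{energyestimate}) concern globally hyperbolic manifolds without boundary and unbroken null geodesics, whereas your $\gamma_n$ are broken geodesics undergoing, for large $n$, many transversal reflections. Two concrete things are missing. First, the reflected-beam construction itself: at each reflection one must produce a reflected complex phase agreeing with the incident one on $\R\times\partial\D$ to sufficiently high order (exact agreement is impossible for complex phases on a curved boundary), verify that the imaginary part of its Hessian is again positive definite transversally to the reflected ray, continue the amplitude, and iterate this through every reflection in $[0,T_n]$. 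Second, and easy to overlook: because incident plus reflected beam satisfies the Dirichlet condition only to finite order in $\lambda$, the difference $w=v_n-\tilde u$ no longer vanishes on $\R\times\partial\D$, so the Stokes-theorem derivation of \eqref{energyestimate} acquires a boundary flux term $\int J^N(w)\cdot n_{\partial\D}$ with no sign, which must itself be shown to vanish as $\lambda\to\infty$ using the high-order boundary vanishing of $\tilde u$. Your plan is plausible (reflected Gaussian beams do exist in the literature, and the paper itself alludes in Section~\ref{parsimonious} to IBVP adaptations ``with some additional work''), but neither of these points is routine, and together they are where the whole content of the theorem lives; the paper's proof carries out its analogous hard core (the caustic bridging) rather than asserting it.
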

His work was motivated by a conjecture of Lax and Phillips from 1967, see \cite{LaxPhillips}, Chapter 5.3. 
They conjectured that an even stronger theorem holds, namely that the theorem above is true even without the assumption that the light rays contributing to \(\ell_R\) are nowhere tangent. However, the behaviour of such grazing rays is in general quite complicated and is still not completely understood.

In the following we will give a very brief sketch of his proof and discuss why it does not transfer directly to more general spacetimes. 
The idea, Ralston followed, to contradict the uniform rate of the local energy decay is to construct, using the geometric optics approximation, solutions with localised energy that follow one of those trapped light rays. One can implement reflections at \(\partial D\) into the construction of localised solutions via geometric optics without problems, see for example \cite{Taylor1}, Chapter 6.6.  However, one has to expect the formation of caustics and thus the breakdown of the approximate solution. Ralston addresses this difficulty by using the explicit representation formula for the wave equation in the vicinity of caustics.
%Although we have seen that the geometric optics approximation alone is not sufficient to prove Theorem \ref{localised} even in this setting, because of the formation of caustics, in \cite{Ral69} in the context of the obstacle problem, Ralston succeeded to construct, using the geometric optics approximation, localised approximate solutions to the wave equation for all time by bridging possibly appearing caustics. 
%Let us outline Ralston's construction of an approximate localised solution. 

Ralston starts by constructing the optical path: Given a light ray \(\gamma\) that starts at \(P_0\), he considers a small \(2\)-surface \(\Sigma_0\) through \(P_0\) such that \(\gamma\) points in the normal direction \(n_0\). This gives rise to a whole bunch of light rays that start off \(\Sigma_0\) in normal direction. 

\begin{center}
\def\svgwidth{7cm}
%% Creator: Inkscape 0.48.2, www.inkscape.org
%% PDF/EPS/PS + LaTeX output extension by Johan Engelen, 2010
%% Accompanies image file '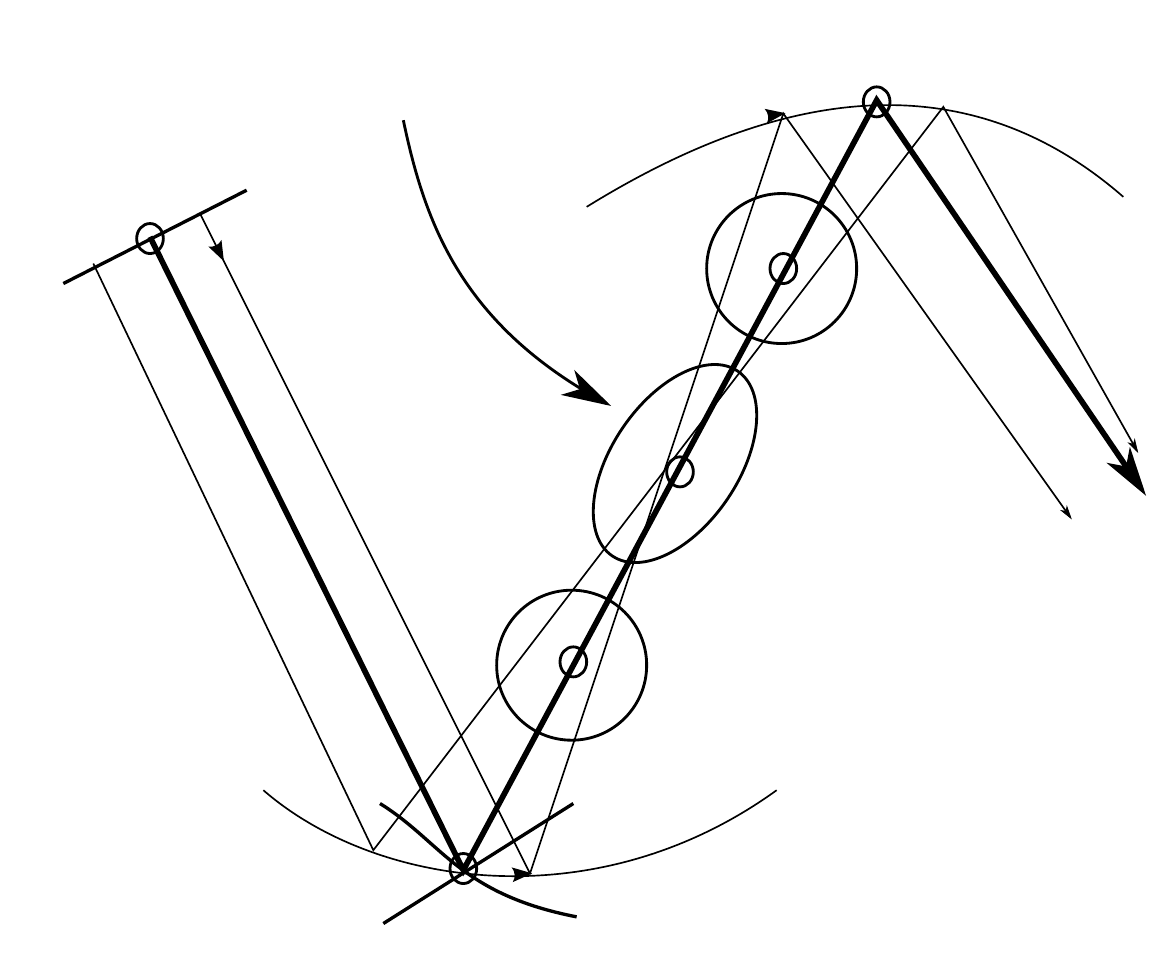' (pdf, eps, ps)
%%
%% To include the image in your LaTeX document, write
%%   \input{<filename>.pdf_tex}
%%  instead of
%%   \includegraphics{<filename>.pdf}
%% To scale the image, write
%%   \def\svgwidth{<desired width>}
%%   \input{<filename>.pdf_tex}
%%  instead of
%%   \includegraphics[width=<desired width>]{<filename>.pdf}
%%
%% Images with a different path to the parent latex file can
%% be accessed with the `import' package (which may need to be
%% installed) using
%%   \usepackage{import}
%% in the preamble, and then including the image with
%%   \import{<path to file>}{<filename>.pdf_tex}
%% Alternatively, one can specify
%%   \graphicspath{{<path to file>/}}
%% 
%% For more information, please see info/svg-inkscape on CTAN:
%%   http://tug.ctan.org/tex-archive/info/svg-inkscape
%%
\begingroup%
  \makeatletter%
  \providecommand\color[2][]{%
    \errmessage{(Inkscape) Color is used for the text in Inkscape, but the package 'color.sty' is not loaded}%
    \renewcommand\color[2][]{}%
  }%
  \providecommand\transparent[1]{%
    \errmessage{(Inkscape) Transparency is used (non-zero) for the text in Inkscape, but the package 'transparent.sty' is not loaded}%
    \renewcommand\transparent[1]{}%
  }%
  \providecommand\rotatebox[2]{#2}%
  \ifx\svgwidth\undefined%
    \setlength{\unitlength}{557.41108398bp}%
    \ifx\svgscale\undefined%
      \relax%
    \else%
      \setlength{\unitlength}{\unitlength * \real{\svgscale}}%
    \fi%
  \else%
    \setlength{\unitlength}{\svgwidth}%
  \fi%
  \global\let\svgwidth\undefined%
  \global\let\svgscale\undefined%
  \makeatother%
  \begin{picture}(1,0.83175822)%
    \put(0,0){\includegraphics[width=\unitlength]{Reflection.pdf}}%
    \put(0.07463074,0.647878){\color[rgb]{0,0,0}\makebox(0,0)[lb]{\smash{\(P_0\)}}}%
    \put(0.37028327,0.01925756){\color[rgb]{0,0,0}\makebox(0,0)[lb]{\smash{\(P_1\)}}}%
    \put(0.71760324,0.7827874){\color[rgb]{0,0,0}\makebox(0,0)[lb]{\smash{\(P_2\)}}}%
    \put(0.56834177,0.23166812){\color[rgb]{0,0,0}\makebox(0,0)[lb]{\smash{\(Q_1^-\)}}}%
    \put(0.72334407,0.49861652){\color[rgb]{0,0,0}\makebox(0,0)[lb]{\smash{\(Q_1^+\)}}}%
    \put(0.64871333,0.3838){\color[rgb]{0,0,0}\makebox(0,0)[lb]{\smash{\(Q_1\)}}}%
    \put(0.28991171,0.7368608){\color[rgb]{0,0,0}\makebox(0,0)[lb]{\smash{caustic}}}%
    \put(0.01435206,0.54454313){\color[rgb]{0,0,0}\makebox(0,0)[lb]{\smash{\(\Sigma_0\)}}}%
    \put(0.26694841,0.03073921){\color[rgb]{0,0,0}\makebox(0,0)[lb]{\smash{\(\Sigma_0'\)}}}%
    \put(0.50806309,0.02499838){\color[rgb]{0,0,0}\makebox(0,0)[lb]{\smash{\(\Sigma_1\)}}}%
    \put(0.25259634,0.40472823){\color[rgb]{0,0,0}\makebox(0,0)[lb]{\smash{\(\gamma\)}}}%
    \put(0.21042502,0.59566465){\color[rgb]{0,0,0}\makebox(0,0)[lb]{\smash{\(n_0\)}}}%
  \end{picture}%
\endgroup%

\end{center}

If the principal curvatures of \(\so\) at \(P_0\) are distinct from \(\frac{1}{l_0} := \frac{1}{|P_1 - P_0|}\) (which can be achieved, of course), then the normal translate \(\Sigma_0'\) of \(\so\) at \(P_1\) exists if we choose \(\so\) small enough. We then `reflect' \(\so'\) at the boundary \(\partial \D\) and obtain in this way the surface \(\Sigma_1\). This procedure is repeated, and by slight perturbations of the already constructed surfaces we can ensure the condition on the principal curvatures. Caustics are forming in a neighbourhood of \(Q_1\), which is at distance \(\frac{1}{\kappa_1}\), where \(\kappa_1\) is one of the principal curvatures of \(\Sigma_1\) at \(P_1\). Here, the normal translate of \(\Sigma_1\) fails to exist, even if we choose \(\Sigma_1\) very small. Ralston then considered two points, \(Q_1^-\) and \(Q_1^+\) on \(\gamma\), that are at distance \(\delta\) of \(Q_1\). The construction with the \(2\)-surfaces allows for an explicit construction of the phase function in the geometric optics approximation away from \(Q_1\). The phase is such that \(\grad_{\R^3} \phi\) points exactly along the light rays we have constructed. Thus, via geometric optics we can obtain a localised, approximate solution \(u_\lambda\) that propagates from a neighbourhood of \(P_0\) to a neighbourhood of \(Q_1^-\). 

To bridge the caustics, Ralston uses the explicit representation formula for solutions of the wave equation in \(\R^{3+1}\) with initial data \(u_\lambda(t=\tau_1)\) and \(\partial_t u_\lambda(t= \tau_1)\).\footnote{Actually, Ralston only considers the leading order in \(\lambda\) for the time derivative. This simplifies the computations, and works equally well, since later one takes large \(\lambda\) anyway.} Let us denote this solution with \(u_\lambda^f\). Since the initial data of \(u_\lambda^f\) is highly oscillatory, one can use the method of stationary phase to approximate \(u_\lambda^f(t=\tau_1 + 2\delta)\). Ralston does this in a uniform way and finds that \(u_\lambda^f(t=\tau_1 + 2\delta)\) is approximately localised around \(Q_1^+\) and  also has approximately the correct phase dependence to continue propagating along the preassigned optical path. Moreover, it is clear by the domain of dependence property that, if we choose \(\delta\) (and thus the \(2\)-surfaces \(\Sigma_i\)) small enough, \(u_\lambda^f\) will stay for the time \(t \in [\tau_1, \tau_1 + 2 \delta]\) in a preassigned small neighbourhood of \(\gamma\). 

At \(Q_1^+\) we go over to an approximation via geometric optics again, etc. This scheme yields a localised, approximate solution \(W_\lambda\) up to some finite time \(T\) which is patched together by the geometric optics approximations and the free space solutions. 
%In particular, this approximate solution \(W_\lambda\) satisfies \eqref{firstcond}, \eqref{secondcond} and \eqref{thirdcond} up to time \(T\), from which Ralston basically infers \eqref{thmlocapp}, where \(v\) is 
Hence, Ralston obtained \(||\Box W_\lambda||_{L^2(\D\times [0,T])} \leq C\), and as for the proof of Theorem \ref{localised} in Section \ref{underlying} we get our proper solution \(v\) to the initial boundary value problem with initial energy equal to one. Note that in this setting it is trivial to ensure that the energy of \(v\), that is localised in a neighbourhood of \(\gamma\), does not decay: equation \eqref{thmlocapp} states in particular that the energy of \(v\) that is outside the neighbourhood in question is smaller than \(\mu\). But the energy of \(v\) is constant and equal to one. Thus, the energy inside the neighbourhood of \(\gamma\) must be bigger than \(1-\mu\). In this way Ralston contradicts the uniform local energy decay statement and proves Theorem \ref{Ralstonthm}.
\newline
\newline
Since we are interested in the wave equation on general Lorentzian manifolds, we also have to expect the formation of caustics (see Appendix \ref{BD}). However, we do not have an explicit representation formula for solutions of the wave equation that would help us mimic Ralston's proof for the obstacle problem. Instead, one could use Maslov's canonical operator here. 
Let us also remark that the absence of a globally timelike Killing vector field allows for the phenomenon that the `trapped' energy decays or blows up. Hence, a theorem of the form \ref{main} is not needed for the obstacle problem, but it is essential for the general Lorentzian case.

\section{A breakdown criterion for solutions of the eikonal equation}
\label{BD}
We give a breakdown criterion for solutions of the eikonal equation for which a given null geodesic is a characteristic. 

\begin{theorem}
\label{breakdown}
Let \((M,g)\) be a Lorentzian manifold and \(\gamma :[0,a) \to M\) an affinely parametrised null geodesic, \(a \in (0,\infty]\). If \(\gamma\) has conjugate points then there exists no solution \(\phi : U \to \R\) of the eikonal equation \(d\phi \cdot d\phi =0\) with \(\grad \,\phi\big|_{\I\, \gamma} = \dot{\gamma}\), where \(U\) is a neighbourhood of \(\I\,\gamma\).
\end{theorem}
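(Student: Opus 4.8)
The plan is to argue by contradiction: assume a smooth real solution $\phi:U\to\R$ of $d\phi\cdot d\phi=0$ exists on a neighbourhood $U$ of the whole image $\I\gamma$ with $\grad\,\phi\big|_{\I\gamma}=\dot\gamma$, and derive from the presence of conjugate points that the second derivatives of $\phi$ along $\gamma$ are inconsistent. First I would record the standard consequences of the eikonal equation. Differentiating $g^{-1}(d\phi,d\phi)=0$ and using the symmetry of the Hessian gives $\nabla_{\grad\phi}\grad\phi=0$, so $\grad\phi$ is a null geodesic vector field whose integral curve through $\gamma(0)$ is exactly $\gamma$. Setting $S:=\nabla\nabla\phi$ (equivalently the shape operator $Y\mapsto\nabla_Y\grad\phi$), the tensor $S$ is symmetric, smooth along all of $\gamma$, annihilates $\dot\gamma$, and preserves $\dot\gamma^\perp$; hence it descends to a symmetric endomorphism $\bar S$ of the spacelike screen bundle $\mathbb S:=\dot\gamma^\perp/\langle\dot\gamma\rangle$. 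As in Section \ref{GaussianBeams}, $S$ along $\gamma$ is a \emph{real} solution of the Riccati equation \eqref{riccati}; the whole point is that reality, together with conjugate points, is contradictory -- precisely the obstruction flagged in the discussion following \eqref{reduction}.

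Next I would exploit the congruence generated by $\grad\phi$. Any connecting field $Y$ (with $[\grad\phi,Y]=0$) restricts to a Jacobi field along $\gamma$ and satisfies the first-order relation $D_sY=\nabla_Y\grad\phi=S(Y)$; since $g(Y,\dot\gamma)$ is then constant, I may take a basis of such fields lying in $\dot\gamma^\perp$ and project them to $\mathbb S$. Collecting these into a frame matrix $\mathbf J(s)$ on $\mathbb S$, normalised by $\mathbf J(s_0)=\mathrm{id}$ in a parallel screen frame, the identity $(\det\mathbf J)'=\tr(\bar S)\det\mathbf J$ together with the finiteness of $\bar S$ on $[0,a)$ forces $\mathbf J(s)$ to be invertible for every $s$. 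Thus the connecting Jacobi fields of the wavefront congruence never vanish.

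The crux is then to play this against the hypothesised conjugate points. By definition these provide a nontrivial screen Jacobi field $\mathcal J$, solving $D_s^2\mathcal J+R(\mathcal J,\dot\gamma)\dot\gamma=0$ on $\mathbb S$, with $\mathcal J(s_0)=\mathcal J(s_1)=0$ for some $s_0<s_1$. Writing $\mathcal J=\mathbf J c$ and using the conserved Wronskian $g(D_s\mathcal J,\bar J_k)-g(\mathcal J,D_s\bar J_k)$ (constant in $s$ by the symmetry of the curvature tensor, exactly as in the footnote to \eqref{conserved}), the symmetry of $\bar S$ collapses this bracket to $[\mathbf G\,c']_k$, where $\mathbf G(s)=\big(g(\bar J_j,\bar J_k)\big)$ is the Gram matrix -- positive definite because $\mathbb S$ is spacelike. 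Hence $c'=\mathbf G^{-1}W$ for a constant vector $W$, while $\mathcal J(s_0)=0$ and invertibility of $\mathbf J$ give $c(s_0)=c(s_1)=0$. Integrating, $0=c(s_1)=\big(\int_{s_0}^{s_1}\mathbf G^{-1}\big)W$ forces $W=0$ since $\int_{s_0}^{s_1}\mathbf G^{-1}$ is positive definite, whence $c\equiv0$ and $\mathcal J\equiv0$, contradicting nontriviality.

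I expect the main obstacle to be bookkeeping rather than conceptual: carefully carrying out the reduction to the screen bundle $\mathbb S$ so that all objects ($S$, the connecting fields, the conjugate Jacobi field, the Lagrange identity) descend consistently, and checking that the positive-definiteness of $\mathbf G$ -- the one place where the spacelike nature of the screen of a \emph{null} geodesic is essential -- is genuinely available. Once the screen framework is fixed, the remaining steps (the symmetry of $S$, the invertibility of $\mathbf J$, and the short Wronskian computation) are routine given the machinery already developed in Section \ref{GaussianBeams}.
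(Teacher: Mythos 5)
Your proposal is correct and follows essentially the same route as the paper's proof: both pass to the quotient (screen) bundle \(\dot\gamma^\perp/\langle\dot\gamma\rangle\) with its positive definite metric, use the wavefront congruence of \(\grad\,\phi\) to produce a family of Jacobi field classes that stays nonsingular along all of \(\gamma\), and then rule out the conjugate-point Jacobi field via the conserved Wronskian together with positive definiteness. The only differences are cosmetic: the paper obtains nonsingularity of its Jacobi tensor class \(\bar{J}\) directly from its being a flow pushforward (rather than via your Liouville-formula argument with \(\nabla\nabla\phi\)), and it packages your variation-of-parameters step as the auxiliary Jacobi tensor class \(\bar{K}=\bar{J}\,C\int(\bar{J}^*\bar{J})^{-1}\) -- whose nonsingularity for \(t>t_0\) is exactly the positive definiteness of your \(\int\mathbf{G}^{-1}\) -- combined with uniqueness of Jacobi field classes.
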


The theorem is motivated by the construction of localised solutions to the wave equation using the naive geometric optics approximation, where we need to find a solution of the eikonal equation for which a given null geodesic is a characteristic. It is well known that solutions of the eikonal equation break down whenever characteristics cross. However, by choosing the initial data (and thus the neighbouring characteristics) suitably one can try to avoid crossing characteristics. This is for example possible in the Minkowski spacetime. The theorem gives a sufficient condition for when no such choice is possible. 

Our proof is a minor adaptation of Riemannian methods to the Lorentzian null case, see for example \cite{EschSull76}, in particular their Proposition \(3\).
\newline
\newline
First we need some groundwork. We pull back the tangent bundle \(TM\) via \(\gamma\) and denote the subbundle of vectors that are orthogonal to \(\dot{\gamma}\) by \(N(\gamma)\). The vectors that are proportional to \(\dot{\gamma}\) give rise to a subbundle of \(N(\gamma)\), which we quotient out to obtain the quotient bundle \(\bar{N}(\gamma)\). It is easy to see that the metric \(g\) induces a positive definite metric \(\bar{g}\) on \(\bar{N}(\gamma)\) and that the bundle map \(R_\gamma : N(\gamma) \to N(\gamma)\), where \(R_\gamma(X) = R(X,\dot{\gamma})\dot{\gamma}\) and \(R\) is the Riemann curvature tensor, induces a bundle map \(\bar{R}_\gamma\) on \(\bar{N}(\gamma)\) and finally that the Levi-Civita connection \(\nabla\) induces a connection \(\bar{\nabla}\) for \(\bar{N}(\gamma)\).

\begin{definition}
\label{JacobiTensor}
\(\bar{J} \in \End\big(\bar{N}(\gamma)\big)\) is a \emph{Jacobi tensor class} iff\footnote{Here and in what follows we write \(\bar{D}_t\) for \(\bar{\nabla}_{\partial_t}\;.\)} \(\bar{D}^2_t \bar{J} + \bar{R}_\gamma \bar{J} =0\;.\)
\end{definition}

A Jacobi tensor class should be thought of as a variation field of \(\gamma\) that arises from a many-parameter variation by geodesics. It generalizes the notion of a Jacobi field (class), an infinitesimal \(1\)-parameter variation. Indeed, a solution \(\phi\) of the eikonal equation for which \(\gamma\) is a characteristic gives rise to a Jacobi tensor class \(\bar{J}\):

We denote the flow of \(\grad \, \phi\) by \(\Psi_t\) and define \(J \in \End \big(N(\gamma)\big)\) by
\begin{equation*}
J_t(X_t) := (\Psi_t)_*(X_0)\;,
\end{equation*}
where we extend \(X_t \in N(\gamma)_t\) by parallel propagation to a vector field \(X\) along \(\gamma\) whose value at \(0\) is \(X_0\). Note that \(J\) is well-defined, i.e., we have \(J_t(X_t) \in N(\gamma)\): Given \(X_0 \in T_{\gamma(0)}M\), extend it to a vector field \(\tilde{X}\) on \(M\) with \([\tilde{X}, \grad\, \phi] =0\), i.e., along \(\gamma\) we have \(\tilde{X}\big|_{\gamma(t)} = (\Psi_t)_*(X_0)\). Then
\begin{equation*}
0= \nabla_{\tilde{X}} (\grad\, \phi, \grad \, \phi) = 2 ( \nabla_{\tilde{X}}\grad\,\phi, \grad\,\phi) = 2 \nabla_{\grad\,\phi} (\tilde{X}, \grad\, \phi)\;,
\end{equation*}
from which it follows that \(\tilde{X}\big|_{\gamma(t)}\) is orthogonal to \(\grad \,\phi \big|_{\gamma(t)}\).
Moreover, \(J\) is a Jacobi tensor:\footnote{This notion is analogous to Definition \ref{JacobiTensor}, without taking the quotient.} Let \(X\) be a parallel section along \(\gamma\) and \(\tilde{X}\) an extension of \(X_0\) as above. Then
\begin{equation*}
(D_t J)(X) = D_t(JX) =D_t(\Psi_{t^*}X_0) = \nabla_{\grad\,\phi}\tilde{X} = \nabla_{\tilde{X}} \grad\,\phi = \nabla_{JX} \grad \,\phi \;.
\end{equation*}
Thus,
\begin{equation}
\label{firstderivative}
D_tJ = (\nabla \grad\,\phi) \circ J \;.
\end{equation}
Differentiating once more gives 
\begin{equation*}
(D_t^2 J)(X)= \nabla_{\grad\, \phi} (\nabla_{JX} \grad \, \phi) = R(\grad \, \phi, JX) \grad\,\phi = -R_\gamma \circ J (X) \;.
\end{equation*}
Using that \((\Psi_t)_* \big(\grad \,\phi\big|_{\gamma(0)}\big) = \grad \,\phi\big|_{\gamma(t)}\), it is now clear that \(J\) descends to a Jacobi tensor class \(\bar{J}\). Moreover, \(\bar{J}\) is non-singular, i.e., \(\bar{J}^{-1}\) exists. Since the metric \(\bar{g}\) is non-degenerate, we can form adjoints of sections of \(\End\big(\bar{N}(\gamma)\big)\), what we will denote by \(^*\). Note also that \((\bar{D}_t \bar{J}) \bar{J}^{-1}\) is self-adjoint. This follows from \eqref{firstderivative} and the fact that \(\nabla \nabla \phi\) is symmetric. We now prove the theorem.

\begin{proof}[Proof of Theorem \ref{breakdown}:]
Assume there exists such a solution \(\phi\) of the eikonal equation. Say the points \(\gamma(t_0)\) and \(\gamma(t_1)\) are conjugate, \(0 \leq t_0 < t_1 < a\), and \(\bar{J}\) is the Jacobi tensor class induced by  \(\phi\) as discussed above. Using the identification of \(\End\big(\bar{N}(\gamma)_t\big)\) with \(\End\big(\bar{N}(\gamma)_{t_0}\big)\) via parallel translation, we write
\begin{equation*}
\bar{K}(t) := \bar{J}(t) C \int_{t_0}^t \big(\bar{J}^* \bar{J}\big)^{-1}(\tau) \,d\tau\;,
\end{equation*}
where \(C= \bar{J}^{-1}(t_0) \bar{J}^*(t_0) \bar{J}(t_0)\). A straightforward computation shows that \(\bar{K}\) is a Jacobi tensor class with \(\bar{K}(t_0) =0\) and \(\bar{D}_t\bar{K}(t_0) = \id\). Moreover, \(\bar{K}(t)\) is non-singular for \(t>t_0\). 

On the other hand there exists a Jacobi field \(Y\) with \(Y(t_0)=0\) and \(Y(t_1)=0\). In particular, this implies that \(Y\) is a section of \(N(\gamma)\). The Jacobi field \(Y\) induces a non-trivial Jacobi field class \(\bar{Y}\) that vanishes at \(t_0\) and \(t_1\). However, a Jacobi field class is uniquely determined by its value and velocity at a point. Parallely propagating \(\bar{D}_t\bar{Y}\big|_{t_0}\) gives rise to a vector field class \(\bar{Z}\). \(\bar{K}\bar{Z}\) is then a Jacobi field class  that has the same value and velocity as \(\bar{Y}\) at \(t=t_0\), thus \(\bar{K}\bar{Z}= \bar{Y}\). This, however, contradicts \(\bar{K}\) being non-singular for \(t > t_0\).
\end{proof}

\bibliographystyle{acm}
\bibliography{Bibly}
\end{document}